\documentclass[12pt]{amsart}
\usepackage{amssymb}
\usepackage{longtable}
\usepackage{hyperref}
\usepackage[total={6.25truein,9truein},centering]{geometry}
\usepackage{tikz}

\allowdisplaybreaks


\numberwithin{equation}{section}

\newtheorem{theorem}[equation]{Theorem}
\newtheorem{lemma}[equation]{Lemma}
\newtheorem{proposition}[equation]{Proposition}
\newtheorem{corollary}[equation]{Corollary}

\newtheorem{theoremintro}{Theorem}

\theoremstyle{definition}

\theoremstyle{remark}
\newtheorem{example}[equation]{Example}
\newtheorem{remark}[equation]{Remark}
\newtheorem{definition}[equation]{Definition}
\newtheorem*{acknowledgments}{Acknowledgments}

\newcommand{\FF}{\mathbb{F}}
\newcommand{\ZZ}{\mathbb{Z}}

\newcommand{\NN}{\mathbb{N}}
\newcommand{\TT}{\mathbb{T}}

\newcommand{\KK}{\mathbb{K}}

\newcommand{\bA}{\mathbf{A}}

\newcommand{\bm}{\mathbf{m}}

\newcommand{\bS}{\mathbf{S}}

\newcommand{\cB}{\mathcal{B}}

\newcommand{\cL}{\mathcal{L}}

\newcommand{\bsi}{\boldsymbol{i}}

\DeclareMathOperator{\GL}{GL}

\DeclareMathOperator{\Mat}{Mat}

\DeclareMathOperator{\Res}{Res}

\newcommand{\sep}{\mathrm{sep}}

\newcommand{\tB}{\widetilde{B}}

\newcommand{\tE}{\widetilde{E}}
\newcommand{\tpi}{\widetilde{\pi}}

\newcommand{\tx}{\tilde{x}}
\newcommand{\ty}{\tilde{y}}
\newcommand{\tX}{\widetilde{X}}

\newcommand{\power}[2]{{#1 [\![ #2 ]\!]}}
\newcommand{\laurent}[2]{{#1 (\!( #2 )\!)}}

\newcommand{\norm}[1]{\lvert #1 \rvert}
\newcommand{\dnorm}[1]{\lVert #1 \rVert}

\newcommand{\assign}{\mathrel{\vcenter{\baselineskip0.5ex \lineskiplimit0pt
                     \hbox{\scriptsize.}\hbox{\scriptsize.}}}%
                     =}

\begin{document}

\title[Effective rigid analytic trivializations for Drinfeld modules]{Effective rigid analytic trivializations \\ for Drinfeld modules}

\author{Chalinee Khaochim}
\address{Phetchaburi Rajabhat University Demonstration School, Phetchaburi Rajabhat University, Phetchaburi 76000, Thailand}
\email{chalinee.kha@mail.pbru.ac.th}

\author{Matthew A. Papanikolas}
\address{Department of Mathematics, Texas A{\&}M University, College Station,
TX 77843, U.S.A.}
\email{papanikolas@tamu.edu}

\subjclass[2020]{Primary 11G09; Secondary 12H10, 33E50}

\date{April 15, 2022}

\begin{abstract}
We develop tools for constructing rigid analytic trivializations for Drinfeld modules as infinite products of Frobenius twists of matrices, from which we recover the rigid analytic trivialization given by Pellarin in terms of Anderson generating functions. One advantage is that these infinite products can be obtained from only a finite amount of initial calculation, and consequently we obtain new formulas for periods and quasi-periods, similar to the product expansion of the Carlitz period. We further link to results of Gekeler and Maurischat on the $\infty$-adic field generated by the period lattice.
\end{abstract}

\keywords{Drinfeld modules, rigid analytic trivializations, Anderson generating functions, $t$-division sequences, periods, quasi-periods, infinite product expansions}

\maketitle

\section{Introduction} \label{S:Intro}
Rigid analytic trivializations were originally defined by Anderson~\cite[Thm.~4]{Anderson86} as tools for determining whether a $t$-module is uniformizable, i.e., that its exponential function is surjective. It was subsequently discovered by Anderson and Pellarin that specializations of rigid analytic trivializations could be used to recover the periods and quasi-periods of a Drinfeld module (see~\cite[\S 2.6]{Goss94}, \cite[\S 4]{Pellarin08}). This specialization phenomenon has led to advances in the transcendence theory of periods and quasi-periods, e.g., see \cite{ABP04}, \cite{CP11}, \cite{CP12}, \cite{Pellarin08}. Given their centrality to the arithmetic of function fields, the focus of the present paper is to investigate new and direct ways of constructing rigid analytic trivializations for Drinfeld modules.

Our motivating example is that of the \emph{Carlitz module}. Let $\FF_q$ be a finite field with~$q$ elements, let $A \assign \FF_q[\theta]$ be the polynomial ring in $\theta$ over $\FF_q$, and let $k\assign \FF_q(\theta)$ be its fraction field. We let $\bA \assign \FF_q[t]$ be the polynomial ring in a variable~$t$ independent from~$\theta$. For any $k$-algebra $R$, the Carlitz module $C$ over $R$ is the $\bA$-module structure on $R$ determined by setting $C_t(x) \assign \theta x + x^q$ for $x \in R$.  If we let $k_\infty \assign \laurent{\FF_q}{1/\theta}$ be the completion of $k$ at its infinite place, and take $\KK$ for the completion of an algebraic closure of $k_\infty$, then the \emph{Carlitz exponential} $\exp_C : \KK \to \KK$ is entire, $\FF_q$-linear, and surjective, and it uniformizes the Carlitz module over~$\KK$.  The kernel of $\exp_C$ is the discrete $A$-submodule of $\KK$ of rank~$1$ generated by the \emph{Carlitz period},
\begin{equation} \label{E:tpi}
    \tpi = -(-\theta)^{q/(q-1)} \prod_{n=1}^{\infty} \Bigl( 1- \theta^{1-q^n} \Bigr)^{-1} \in k_{\infty} \bigl( (-\theta)^{1/(q-1)} \bigr),
\end{equation}
where we have fixed a $(q-1)$-st root of $-\theta$. Throughout the arithmetic of function fields, $\tpi$ plays the role of $2\pi i$ in characteristic~$0$. This product formula for $\tpi$ was essentially first derived by Carlitz~\cite[Thm.~5.1]{Carlitz35}, and for more information on the Carlitz module and its exponential function, see \cite[Ch.~3]{Goss}, \cite[Ch.~2]{Thakur}.

The rigid analytic trivialization of $C$ is the \emph{Anderson-Thakur function}
\begin{equation} \label{E:omega}
    \omega_C(t) \assign (-\theta)^{1/(q-1)} \prod_{n=0}^{\infty} \biggl( 1 - \frac{t}{\theta^{q^n}} \biggr)^{-1} \in \TT,
\end{equation}
where $\TT$ is the Tate algebra in $\power{\KK}{t}$ consisting of power series that converge on the closed unit disk in~$\KK$. In~\cite[\S 2.5]{AndThak90}, Anderson and Thakur showed that $\omega_C$ satisfies a number of important properties. For $n \in \ZZ$, if we define the Frobenius twist $f \mapsto f^{(n)} : \laurent{\KK}{t} \to \laurent{\KK}{t}$ by $\sum c_i t^i \mapsto \sum c_i^{q^n} t^i$, then
\begin{equation} \label{E:omegafneq}
  \omega_C^{(1)} = (t-\theta)\omega_C.
\end{equation}
Moreover, $\omega_C$ extends to a meromorphic function on all of~$\KK$, and it has a simple pole at $t=\theta$, where we readily verify from~\eqref{E:tpi} that
\begin{equation} \label{E:omegares}
 \Res_{t=\theta} \omega_C = -\tpi.
\end{equation}
The Anderson-Thakur function further plays a central role in special values of Pellarin $L$-series (e.g., see \cite{Pellarin12}, \cite{Perkins14}).

Letting $\KK[\tau]$ be the ring of twisted polynomials in the $q$-th power Frobenius $\tau$, a \emph{Drinfeld module} of rank $r$ is an $\FF_q$-algebra homomorphism $\phi : \bA \to \KK[\tau]$ determined by
\begin{equation} \label{E:phiintro}
 \phi_t = \theta + A_1 \tau + \dots + A_r \tau^r, \quad A_r \neq 0.
\end{equation}
Like the Carlitz module, $\phi$ is uniformized by an exponential function $\exp_{\phi} : \KK \to \KK$, whose kernel $\Lambda_{\phi}$ is a discrete free $A$-submodule of $\KK$ of rank~$r$. We call $\Lambda_{\phi}$ the \emph{period lattice} of $\phi$. For $\pi \in \Lambda_{\phi}$, we define the \emph{Anderson generating function}
\[
    f_{\phi}(\pi;t) \assign \sum_{m=0}^\infty \exp_{\phi} \biggl( \frac{\pi}{\theta^{m+1}} \biggr) t^m \in \TT.
\]
Initially defined by Anderson~\cite[\S 3.2]{Anderson86}, these functions satisfy a number of useful properties which we summarize in \S\ref{S:Prelim}.  Most notably,
\[
  \theta f_{\phi}(\pi;t) + A_1 f_{\phi}(\pi;t)^{(1)} + \cdots + A_r f_{\phi}(\pi;t)^{(r)} = t f_{\phi}(\pi;t),
\]
and $f_{\phi}(\pi;t)$ is a meromorphic function on $\KK$ with a simple pole at $t=\theta$ (if $\pi \neq 0$) and
\begin{equation} \label{E:fpires}
  \Res_{t=\theta} f_{\phi}(\pi;t) = -\pi.
\end{equation}
There is an obvious parallel with \eqref{E:omegafneq} and~\eqref{E:omegares}, and in fact $\omega_C$ is the Anderson generating function for~$\tpi$ on the Carlitz module (see \cite[Prop.~5.1.3]{ABP04}).

For an $A$-basis $\pi_1, \dots, \pi_r$ of $\Lambda_{\phi}$, we set $f_j \assign f_{\phi}(\pi_j;t)$ for each $j$, and we let
\begin{equation} \label{E:Upsilon}
\Upsilon  \assign \begin{pmatrix}
f_1    &   f_2         &  \cdots      &   f_r \\
f_1^{(1)} &   f_2^{(1)}   &  \cdots      &   f_r^{(1)} \\
\vdots    &   \vdots      &  \ddots      &   \vdots  \\
f_1^{(r-1)} &   f_2^{(r-1)}   &  \cdots      &   f_r^{(r-1)}
\end{pmatrix}.
\end{equation}
Pellarin~\cite[\S 4.2]{Pellarin08} showed that $\Upsilon$ is invertible in $\GL_r(\TT)$, and moreover, he observed that (i) by~\eqref{E:fpires} the negatives of the residues at $t=\theta$ of the entries of the first row yield the periods $\pi_1, \dots, \pi_r$, and (ii) based on calculations of Gekeler~\cite[Rem.~2.7]{Gekeler89}, for $1 \leqslant i \leqslant r-1$, the value $f_j^{(i)}(\theta)$ is a strictly reduced quasi-period for $\phi$ associated to $\pi_j$.  Furthermore, he proved that
\begin{equation} \label{E:UpsRAT}
  \Upsilon^{(1)} = \Theta \Upsilon,
\end{equation}
where we apply the Frobenius twist entry-wise and where
\begin{equation} \label{E:Theta}
\Theta \assign \begin{pmatrix}
0    &   1   &  \cdots  &   0 \\
\vdots &  \vdots &  \ddots &  \vdots  \\
0    &   0   &  \cdots  &   1 \\
(t-\theta)/A_r & -A_1/A_r  &  \cdots  & -A_{r-1}/A_r\\
\end{pmatrix}.
\end{equation}
The matrix $\Theta$ arises naturally from the $t$-motive associated to $\phi$, and by definition, \eqref{E:UpsRAT} makes $\Upsilon$ into a \emph{rigid analytic trivialization} for $\phi$ (see \S\ref{S:Prelim} for details).

\begin{remark}
(1) This same specialization property that uses rigid analytic trivializations to supply periods and quasi-periods holds for other $t$-modules as well.  For rank~$1$ objects, one can consult~\cite[\S 6]{ABP04}, \cite[\S 4]{BP02}, \cite{Sinha97}, for instances involving the geometric $\Gamma$-function, and~\cite[Thm.~4.6]{GreenP18} for certain rank~$1$ Drinfeld modules over more general rings~$\bA$. For general $t$-modules, see~\cite[\S 3]{GazdaMaurischat22}, \cite[\S 6]{Green22}, \cite[\S 5]{Maurischat22}, for periods, and \cite[\S 4]{NP21} for periods and quasi-periods. In all of these cases, rigid analytic trivializations are obtained through possibly higher dimensional versions of Anderson generating functions.

(2) In the present paper we study only rigid analytic trivializations associated to the $t$-motive of a Drinfeld module, as opposed to the rigid analytic trivializations associated to its dual $t$-motive.  However, as seen in~\cite[\S 3.4]{CP12}, \cite[Ex.~2.5.16]{HartlJuschka20}, the two theories are related, and moreover, using a theorem of Hartl and Juschka~\cite[Thm.~2.5.13]{HartlJuschka20}, we can transform one to the other for general abelian and $\bA$-finite $t$-modules~\cite[Thm.~4.4.14]{NP21}.
\end{remark}

There is one noticeable advantage of $\omega_C(t)$ over the more general Anderson generating functions $f_{\phi}(\pi;t)$: the definition of $f_{\phi}(\pi;t)$ ostensibly presupposes knowledge of $\pi$ itself, whereas the product expansion in~\eqref{E:omega} is independent of knowing $\tpi$ in advance. The coefficients of $f_{\phi}(\pi;t)$ form a $t$-division sequence of $t$-power torsion on~$\phi$, but obtaining $f_{\phi}(\pi;t)$ exactly from these coefficients would require the precise selection of this infinite sequence. Thus unfortunately, in order to use $f_{\phi}(\pi;t)$ to retrieve~$\pi$ or the quasi-periods associated to~$\pi$, we have run into a chicken-and-egg problem.

One goal of the present paper is to construct a rigid analytic trivialization $\Upsilon$ for $\phi$ from a full system of Anderson generating functions $f_1, \dots, f_r$, associated to an $A$-basis $\pi_1, \dots, \pi_r$ of $\Lambda_{\phi}$, \emph{without} using $\pi_1, \dots, \pi_r$ as initial inputs. Much like for $\omega_C$, we will achieve this through an infinite product, but of matrices, which can be constructed in an effective manner, i.e., by utilizing only a \emph{finite} amount of initial computation (see Theorems~\ref{TI:PiRAT} and~\ref{TI:Pi}). In this way we can obtain $\Upsilon$ efficiently, together with formulas for the periods and quasi-periods of our Drinfeld module, from first principles.

The outline of this construction is as follows. After establishing definitions and prior results in \S\ref{S:Prelim}, we construct a matrix $B \in \Mat_r(\KK[t]) \cap \GL_r(\TT)$ in \S\ref{S:MatEst} so that 
\begin{equation} \label{E:BThetaineq}
  \dnorm{B^{-1} \Theta^{-1} B^{(1)} - I} < 1,
\end{equation}
where $\dnorm{\,\cdot\,}$ is the extension of the Gauss norm on $\TT$ to $\Mat_r(\TT)$ and $I$ is the $r\times r$ identity matrix (see Theorem~\ref{T:B}). The entries of $B$ are obtained by systematic selection of a basis of $t^N$-torsion points $\xi_1, \dots, \xi_r \in \phi[t^N]$, where $N \geqslant 1$ is determined by the degrees of the coefficients $A_1, \dots, A_r$ in~\eqref{E:phiintro} using estimates from~\cite{EP14} (see Remark~\ref{R:N}).  We choose these $t^N$-torsion points recursively through analysis of the Newton polygon of $\phi_t(x) \in \KK[x]$ (see Propositions~\ref{P:algo} and~\ref{P:good-xi}), and moreover, $\phi_{t^{N-1}}(\xi_1), \dots, \phi_{t^{N-1}}(\xi_r) \in \phi[t]$ form a \emph{strict basis} of the $t$-torsion module $\phi[t]$, in that the degrees of these elements match the slopes of the Newton polygon of $\phi_t(x)$ in a prescribed way (see Definition~\ref{D:Strict}). By letting
\[
  h_j \assign \phi_{t^{N-1}} (\xi_j) + \phi_{t^{N-2}}(\xi_j)t + \cdots + \xi_j t^{N-1},
\]
the matrix
\begin{equation} \label{E:Bintro}
  B \assign \bigl( h_j^{(i-1)} \bigr) \in \Mat_r(\KK[t])
\end{equation}
is an element of $\GL_r(\TT)$ (see Proposition~\ref{P:detB}). It further satisfies~\eqref{E:BThetaineq} by Theorem~\ref{T:B}, which leads to the following result (stated later as Corollary~\ref{C:PiRAT}), producing 
a rigid analytic trivialization for~$\phi$ via an infinite product of twists of matrices.

\begin{theoremintro} \label{TI:PiRAT}
Continuing with notation as above, the infinite product
\[
  \Pi \assign B \prod_{n=0}^\infty \Bigl( B^{-1} \Theta^{-1} B^{(1)} \Bigr)^{(n)}
\]
converges with respect to the Gauss norm on $\Mat_r(\TT)$ and lies in~$\GL_r(\TT)$.  Moreover,
\[
  \Pi^{(1)} = \Theta \Pi,
\]
and so $\Pi$ is a rigid analytic trivialization for~$\phi$.
\end{theoremintro}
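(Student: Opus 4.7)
The plan is to set $M \assign B^{-1}\Theta^{-1}B^{(1)} \in \Mat_r(\TT)$ and exploit the contraction estimate $c \assign \dnorm{M - I} < 1$ supplied by Theorem~\ref{T:B}. Writing $P \assign \prod_{n=0}^\infty M^{(n)}$ so that $\Pi = BP$, the task reduces to (i) showing the partial products $P_N \assign \prod_{n=0}^{N} M^{(n)}$ converge to a unit in $\Mat_r(\TT)$, and (ii) verifying the functional equation.

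For (i), I would use that the identity matrix is fixed by Frobenius and that the Gauss norm on $\TT$ satisfies $\dnorm{f^{(n)}} = \dnorm{f}^{q^n}$, since twisting raises each coefficient to the $q^n$-th power. Thus $\dnorm{M^{(n)} - I} = \dnorm{(M - I)^{(n)}} = c^{q^n}$, and the ultrametric inequality yields $\dnorm{M^{(n)}} \leqslant 1$ for every $n \geqslant 0$. Induction on $N$ then gives $\dnorm{P_N} \leqslant 1$, and the telescoping identity $P_{N+1} - P_N = P_N (M^{(N+1)} - I)$ bounds $\dnorm{P_{N+1} - P_N} \leqslant c^{q^{N+1}} \to 0$. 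Hence $(P_N)$ is Cauchy in the complete non-archimedean Banach algebra $\Mat_r(\TT)$ and converges to some $P$. Summing the same telescoping estimates from $P_{-1} \assign I$ gives $\dnorm{P - I} \leqslant c < 1$, so $P = I + (P-I)$ is invertible via geometric series, whence $\Pi = BP \in \GL_r(\TT)$.

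For (ii), I would use that Frobenius twisting is a continuous endomorphism of $\Mat_r(\TT)$, so that
\[
  P^{(1)} = \lim_{N \to \infty} \bigl(P_N\bigr)^{(1)} = \lim_{N \to \infty} \prod_{n=0}^N M^{(n+1)} = \prod_{n=1}^\infty M^{(n)}.
\]
Factoring the $n = 0$ term off the defining product for $P$ yields $P = M \cdot P^{(1)}$, and substituting $M = B^{-1}\Theta^{-1}B^{(1)}$ gives
\[
  \Pi^{(1)} = B^{(1)} P^{(1)} = B^{(1)} M^{-1} P = B^{(1)} \bigl(B^{(1)}\bigr)^{-1} \Theta B \cdot P = \Theta \Pi,
\]
as required.

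The one genuine obstacle is the contraction estimate $\dnorm{B^{-1}\Theta^{-1}B^{(1)} - I} < 1$ itself, whose proof (Theorem~\ref{T:B}) is where the Newton-polygon-based construction of $B$ from a strict basis of $\phi[t^N]$ pays off; granted that, the convergence and the functional equation above are a standard non-archimedean argument.
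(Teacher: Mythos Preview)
Your proposal is correct and follows essentially the same route as the paper: the paper likewise sets $F \assign B^{-1}\Theta^{-1}B^{(1)}$, uses $\dnorm{F^{(n)}-I} = \dnorm{F-I}^{q^n} \to 0$ to get convergence and invertibility of the product, and derives $\Pi^{(1)} = \Theta\Pi$ by the same telescoping of the first factor. The only cosmetic difference is that the paper phrases invertibility via $\det P \in \TT^{\times}$ rather than your geometric series, but these are equivalent standard moves.
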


Because such a product of matrices depends on the order of the factors, we note that in Theorem~\ref{TI:PiRAT} and elsewhere each successive term of the infinite product is multiplied on the right. That is, the product is $(B^{-1} \Theta^{-1} B^{(1)})( B^{-1} \Theta^{-1} B^{(1)})^{(1)}( B^{-1} \Theta^{-1} B^{(1)})^{(2)} \cdots$.

In the case of the Carlitz module, one checks that $\Theta = t-\theta$ and $B = (-\theta)^{1/(q-1)}$ in Theorem~\ref{TI:PiRAT}, from which we see that $\Pi = \omega_C$. To what extent can we use $\Pi$ to recover $\Upsilon$ for general Drinfeld modules $\phi$? Like the Carlitz module, Drinfeld modules of rank~$1$ over more general rings possess product expansions for their rigid analytic trivializations \cite[\S 3]{ANT17}, \cite[\S 4]{GreenP18}. But in general we can in fact use $\xi_1, \dots, \xi_r$ to construct an $A$-basis $\pi_1, \dots, \pi_r$ of $\Lambda_\phi$, and then apply \cite[Thm.~6.13]{EP14} to show that $\Pi$ is the same as $\Upsilon$ in~\eqref{E:Upsilon}. The following is our main result in these directions (stated later with additional details in Theorem~\ref{T:Pi}).

\begin{theoremintro} \label{TI:Pi}
Choose $N \geqslant 1$ and $\xi_1, \dots, \xi_r \in \phi[t^N]$ as in Proposition~\ref{P:good-xi}. Let $B = ( h_j^{(i-1)}) \in \GL_r(\TT)$ be defined as in \eqref{E:Bintro}, and construct the rigid analytic trivialization $\Pi$ for $\phi$ as in Theorem~\ref{TI:PiRAT}. Letting $\pi_j \assign \theta^N \log_{\phi}(\xi_j)$, the quantities $\pi_1, \dots, \pi_r$ form an $A$-basis of $\Lambda_{\phi}$, and moreover
\[
  \Pi = \Upsilon,
\]
where $\Upsilon$ is defined with respect to $\pi_1, \dots, \pi_r$ in~\eqref{E:Upsilon}.
\end{theoremintro}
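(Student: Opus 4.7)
The proof splits into two parts: first that $\pi_1,\ldots,\pi_r$ is an $A$-basis of $\Lambda_\phi$, and second that $\Pi=\Upsilon$. For the basis, the construction begins by noting that Proposition~\ref{P:good-xi} places each $\xi_j$ inside the domain of convergence of $\log_\phi$, so $\pi_j\assign\theta^N\log_\phi(\xi_j)\in\KK$ is well defined. The functional equation $\exp_\phi(\theta^N z)=\phi_{t^N}(\exp_\phi(z))$ applied at $z=\log_\phi(\xi_j)$, together with $\phi_{t^N}(\xi_j)=0$, gives $\exp_\phi(\pi_j)=0$, so $\pi_j\in\Lambda_\phi$. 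To upgrade these elements to an $A$-basis, I combine two ingredients: (i)~$\exp_\phi$ induces an $A$-module isomorphism $\theta^{-N}\Lambda_\phi/\Lambda_\phi\iso\phi[t^N]$ sending $\pi_j/\theta^N+\Lambda_\phi$ to $\xi_j$, and the strict basis property of $\phi_{t^{N-1}}(\xi_j)\in\phi[t]$ lifts via Nakayama over the local ring $A/(\theta^N)$ to make the $\xi_j$ an $A/(t^N)$-basis of $\phi[t^N]$, so the $\pi_j$ generate $\Lambda_\phi$ modulo $\theta^N\Lambda_\phi$; (ii)~the Newton polygon control from Proposition~\ref{P:good-xi}, translated across $\log_\phi$, pins each $|\pi_j|$ to the $j$-th successive minimum of $\Lambda_\phi$. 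These two together will force $\pi_1,\ldots,\pi_r$ to be an $A$-basis.

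For $\Pi=\Upsilon$, the starting observation is that for $0\leqslant m\leqslant N-1$ the coefficient of $t^m$ in the Anderson generating function $f_j\assign f_\phi(\pi_j;t)$ is
\[
\exp_\phi\bigl(\pi_j/\theta^{m+1}\bigr)=\exp_\phi\bigl(\theta^{N-1-m}\log_\phi(\xi_j)\bigr)=\phi_{t^{N-1-m}}(\xi_j),
\]
which is precisely the coefficient of $t^m$ in $h_j$. Hence $f_j\equiv h_j\pmod{t^N\TT}$, so $\Upsilon\equiv B\pmod{t^N\Mat_r(\TT)}$. Both $\Pi$ and $\Upsilon$ lie in $\GL_r(\TT)$ and satisfy $X^{(1)}=\Theta X$, so $M\assign\Upsilon^{-1}\Pi$ is Frobenius-fixed; since the $\tau$-invariants of $\TT$ are exactly $\FF_q[t]$ (coefficients satisfy $c^q=c$, hence lie in $\FF_q$, and the Tate convergence condition forces finite support), one obtains $M\in\GL_r(\FF_q[t])$ with $\det M\in\FF_q^\times$.

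The remaining step verifies $M=I$. Here I would appeal to \cite[Thm.~6.13]{EP14}, which constructs an effective rigid analytic trivialization canonically attached to a strict basis of $\phi[t^N]$; since both $\Upsilon$ (through its Anderson-generating-function definition) and $\Pi$ (through its iterative construction from $B$) arise from the same initial data $\xi_1,\ldots,\xi_r$, each must coincide with this canonical trivialization, yielding $\Pi=\Upsilon$. The hardest step of the argument is making precise that the infinite-product iteration $\Pi=B\prod_n(B^{-1}\Theta^{-1}B^{(1)})^{(n)}$ recovers the trivialization predicted by \cite[Thm.~6.13]{EP14}; alternatively, one can combine the $t$-adic congruence $\Upsilon\equiv B\pmod{t^N}$ with the Gauss-norm bound $\dnorm{\Pi-B}\leqslant\dnorm{B}\cdot\dnorm{B^{-1}\Theta^{-1}B^{(1)}-I}$ from Theorem~\ref{T:B} to establish $\dnorm{M-I}<1$, which forces $M=I$ since any nonzero element of $\FF_q[t]$ has Gauss norm equal to $1$.
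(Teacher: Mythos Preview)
Your overall strategy---reduce to showing that $M\assign\Upsilon^{-1}\Pi\in\GL_r(\FF_q[t])$ equals $I$---is natural, and the reduction itself (via Frobenius-invariance of $M$) is correct. However, the final step has a genuine gap in both alternatives you propose.

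For option~(b): the $t$-adic congruence $\Upsilon\equiv B\pmod{t^N}$ and the Gauss-norm estimate on $\Pi-B$ live in incompatible topologies and cannot be combined as you suggest. Indeed one also has $\Pi\equiv B\pmod{t^N}$, since Proposition~\ref{P:F-I} gives $B^{-1}\Theta^{-1}B^{(1)}-I=-t^N(t-\theta)^{-1}B^{-1}W\in t^N\Mat_r(\TT)$; hence $M\equiv I\pmod{t^N}$ in $\Mat_r(\FF_q[t])$. But this does \emph{not} force $M=I$: an upper-triangular unipotent matrix with a single off-diagonal entry $t^N$ already lies in $\GL_r(\FF_q[t])$ and is congruent to~$I$ modulo~$t^N$. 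Moreover, since every nonzero element of $\FF_q[t]$ has Gauss norm exactly~$1$, the assertion $\dnorm{M-I}<1$ is \emph{equivalent} to $M=I$; it cannot serve as an intermediate step unless you produce a direct Gauss-norm bound on $\Upsilon^{-1}(\Pi-\Upsilon)$, which you have not done.

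For option~(a), your appeal to \cite[Thm.~6.13]{EP14} is in spirit what the paper does, but the paper makes it precise through an explicit entrywise computation that you have omitted. Via Lemma~\ref{L:Pi_n}, Lemma~\ref{L:R_m}, and Proposition~\ref{P:Pi_n}, the partial products $\Pi_n$ are expanded and one passes to the limit to obtain
\[
[\Pi]_{ij}=\Bigl(h_j-\tfrac{t^N}{t-\theta}\,\cL_\phi(\xi_j;t)\Bigr)^{(i-1)};
\]
then \eqref{E:LxitAGF} (which is the content of \cite[Thm.~6.13]{EP14}) identifies this with $f_\phi(\pi_j;t)^{(i-1)}$, giving $\Pi=\Upsilon$ on the nose, with no residual $\GL_r(\FF_q[t])$ ambiguity. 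Your option~(a) gestures at this computation but does not carry it out, and as you yourself acknowledge, ``making precise'' that the infinite product recovers the trivialization from~\cite{EP14} is exactly the missing content.

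Finally, note that the paper's logical order is the reverse of yours: it first establishes $\Pi=\Upsilon$ entrywise, and only \emph{then} deduces that $\pi_1,\ldots,\pi_r$ form an $A$-basis of~$\Lambda_\phi$, by comparing $\Pi$ with the Pellarin matrix $U$ attached to an arbitrary $A$-basis and observing that the transition matrix has determinant in $\TT^\times\cap\bA=\FF_q^\times$. This bypasses both your Nakayama argument (which, over the non-local ring~$A$, only shows the $\pi_j$ generate $\Lambda_\phi/\theta^N\Lambda_\phi$; the resulting transition determinant is merely prime to~$\theta$, not a unit in~$A$) and your unproved successive-minima claim.
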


In addition to providing identities for periods and quasi-periods, Theorems~\ref{TI:PiRAT} and~\ref{TI:Pi} and their proofs lead to precise descriptions of the field generated by the period lattice over the field of definition of the Drinfeld module in Corollary~\ref{C:fieldexts}. This recovers a result of Maurischat~\cite[Thm.~3.1]{Maurischat19a} in rank~$2$, and parts of results of Gekeler in arbitrary rank~\cite[\S 2]{Gekeler19}. It also wraps up a picture started in~\cite[Thm.~5.3]{EP13}.

\begin{remark}
As pointed out by one referee, the results in Theorems~\ref{TI:PiRAT} and~\ref{TI:Pi} could in principle be extended to Anderson $t$-modules which are abelian and rigid analytically trivial. Further investigation would be necessary to make this precise, though perhaps descriptions of rigid analytic trivializations and Anderson generating functions from~\cite{GazdaMaurischat22}, \cite{Green22}, \cite{HartlJuschka20}, \cite{Maurischat22}, \cite{NP21}, would be helpful. One thing to note is that the proofs in the present paper, and especially the proof of the convergence of the product in Theorem~\ref{TI:PiRAT}, rely heavily on the explicit nature of $\Theta$ given in~\eqref{E:Theta}.
\end{remark}

In \S\ref{S:Rank2} we investigate the case of rank~$2$ in more detail. Our findings dovetail with Maurischat's theorem~\cite[Thm.~3.1]{Maurischat19a}, which we summarize in Theorem~\ref{T:Rank2}. Then in Examples~\ref{Ex:Car} and~\ref{Ex:A} we approximate the matrices $B$ and $\Pi$ for specific rank~$2$ Drinfeld modules and use these approximations to calculate periods and quasi-periods.

\begin{acknowledgments}
The authors thank Q.~Gazda and A.~Maurischat for valuable comments on a previous version of this paper. They further thank the referees for a number of suggestions that improved some mathematical statements and greatly simplified exposition.
\end{acknowledgments}

\section{Preliminaries} \label{S:Prelim}
The following notation will be used throughout the paper:

\begin{longtable}{p{2.25truecm}@{\hspace{5pt}$=$\hspace{5pt}}p{11truecm}}
$\FF_q$ & finite field with $q$ elements, $q$ a power of a fixed prime $p$. \\
$A$ & $\FF_q[\theta]$, the polynomial ring in $\theta$ over $\FF_q$. \\
$k$ & $\FF_q(\theta)$, the fraction field of $A$. \\
$k_\infty$ & $\laurent{\FF_q}{1/\theta}$, the completion of $k$ at its infinite place. \\
$\KK$ & the completion of an algebraic closure of $k_\infty$. \\
$\deg$ & $-v_{\infty}$, where $v_{\infty}$ is the $\infty$-adic valuation on $\KK$, $\deg \theta =1$. \\
$\bA$ & $\FF_q[t]$, the polynomial ring in $t$ over $\FF_q$, $t$ independent from $\theta$. \\
$\TT$ & the Tate algebra of $\power{\KK}{t}$ on the closed unit disk.\\
$\TT_\theta$ & the Tate algebra of $\power{\KK}{t}$ on the closed disk of radius $\norm{\theta}$.\\
$F^{\sep}$ & a separable algebraic closure of a field $F$.\\
$\Mat_{m \times n}(R)$ & for a ring $R$, the left $R$-module of $m \times n$ matrices.\\
$\Mat_d(R)$ & $\Mat_{d\times d}(R)$.\\
$[B]_{ij}$ & the $(i,j)$-entry of a matrix $B$.
\end{longtable}

The absolute value $\norm{\,\cdot\,}$ on $k_\infty$ is chosen so that $|\theta|=q$, its valuation satisfies $v_{\infty}(\theta)=-1$, and we let $\deg \assign -v_{\infty}$. Then $\norm{\,\cdot\,}$, $v_\infty$, and $\deg$ extend uniquely to $\KK$.

\textit{Drinfeld modules}. For fundamental properties of Drinfeld modules, see Goss~\cite[Ch.~3--4]{Goss} or Thakur \cite[Ch.~2]{Thakur}.  For the $q$-th power Frobenuis map $\tau: \KK \to \KK$ ($z\mapsto z^q$), the ring of twisted polynomials $\KK[\tau]$ in $\tau$ over $\KK$ satisfies $\tau c = c^q \tau$ for all $c \in \KK$. A \emph{Drinfeld module of rank $r$} over $\KK$ is an $\FF_q$-algebra homomorphism $\phi:\bA \to \KK[\tau]$ determined by
\begin{equation}  \label{E:DMdef}
 \phi_t = \theta + A_1 \tau + \dots +  A_r \tau^r,\quad A_r \neq 0.
\end{equation}
As usual, we obtain an $\bA$-module structure on $\KK$ induced by $\phi$ by the action $a \cdot x \assign \phi_a(x)$ for $a\in\bA$, $x\in\KK$. For any $a\in\bA$, we take $\phi[a] \assign \{x\in\KK : \phi_a(x) = 0 \}$ to be the $\bA$-submodule of $a$-torsion points on~$\phi$.  Then $\phi[a] \cong (\bA/(a))^r$ as $\bA$-modules.

The \emph{exponential} of $\phi$ is defined to be the entire, surjective, $\FF_q$-linear power series,
\begin{equation} \label{E:expdef}
\exp_\phi(z) = \sum_{n=0}^\infty \alpha_n z^{q^n}, \quad \alpha_0 =1,\ \alpha_n\in\KK,
\end{equation}
satisfying $\exp_\phi(a(\theta)z) = \phi_a(\exp_\phi(z))$ for every $a\in\bA$. Letting $\Lambda_\phi \assign \ker \exp_\phi$, one finds that $\Lambda_\phi$ is a discrete, free $A$-module of rank $r$ inside $\KK$. We call $\Lambda_\phi$ the \emph{period lattice} of $\phi$, and any element of $\Lambda_\phi$ a \emph{period} of $\phi$. The \emph{logarithm} $\log_{\phi}(z)$ of $\phi$ is the formal inverse of $\exp_\phi(z)$ with respect to composition, and it has a finite radius of convergence $P_\phi$ on~$\KK$. By \cite[Prop.~4.14.2]{Goss},
\begin{equation} \label{E:lograd}
 P_{\phi} = \min\{ |\pi| : \pi \in \Lambda_{\phi},\, \pi \neq 0 \}.
\end{equation}

\textit{Tate algebras}. The Tate algebra of power series converging on the closed unit disk of~$\KK$,
\[
\TT \assign \biggl\{ \sum c_i t^i  \in \power{\KK}{t} : |c_i| \to 0,\, \textup{for $i \to \infty$} \biggr\},
\]
is a complete normed $\KK$-algebra with respect to the Gauss norm $\dnorm{\,\cdot\,}$, which is defined by $\dnorm{\sum c_i t^i} \assign \sup_i |c_i| = \max_i |c_i|$. We recall that $u=\sum c_i t^i \in \TT$ is in $\TT^{\times}$ if and only if (i) $c_0 \neq 0$ and (ii) we can write $u = c_0(1+g)$ for $g \in \TT$ with $\dnorm{g} < 1$ (see \cite[Lem.~2.9.1]{Anderson86}, \cite[Cor.~2.2.4]{FresnelvdPut}). For $\eta \in \KK^{\times}$, we further define the Tate algebra
\[
\TT_\eta \assign \biggl\{ \sum c_i t^i \in \power{\KK}{t} : |\eta|^i \cdot |c_i| \to 0,\, \textup{for $i \to \infty$} \biggr\},
\]
consisting of functions that converge on the closed disk of radius~$|\eta|$.  For more details on the theory of Tate algebras, see~\cite[\S 2]{FresnelvdPut}. For a matrix $F = (f_{ij}) \in \Mat_{r\times s}(\TT)$, we set $\dnorm{F} \assign \max_{i,j} \dnorm{f_{ij}}$, making $\Mat_{r\times s}(\TT)$ into a complete normed $\TT$-module.

For any $f=\sum c_i t^i \in \laurent{\KK}{t}$ and $n\in\ZZ$, the $n$-th \emph{Frobenius twist} of $f$ is $f^{(n)} \assign \sum c_i^{q^n} t^i$. For $F = (f_{ij}) \in \Mat_{r\times s}(\laurent{\KK}{t})$, we take $F^{(n)} \assign (f_{ij}^{(n)})$. For $\delta > 0$ and any $f \in \TT_{\theta^{\delta/q}}$, we have $f^{(1)} \in \TT_{\theta^{\delta}}$.  In particular,
\begin{equation} \label{E:twistTate}
  f \mapsto f^{(1)} : \TT_{\theta^{1/q}} \to \TT_{\theta}.
\end{equation}
For $\Delta = b_0 + b_1\tau + \cdots + b_{\ell} \tau^{\ell} \in \KK[\tau]$ and $f \in \TT$, we define
\begin{equation} \label{E:Deltaop}
  \Delta(f) \assign b_0 f + b_1 f^{(1)} + \cdots + b_{\ell} f^{(\ell)},
\end{equation}
thus making $\Delta$ into an $\bA$-linear endomorphism of $\TT$.

\textit{Anderson generating functions}. We continue with our Drinfeld module $\phi$ in~\eqref{E:DMdef}. For $u\in\KK$, the Anderson generating function for $\phi$ associated to $u$ is defined by
\begin{equation} \label{E:AGFdef}
f_\phi(u;t) \assign \sum_{m=0}^\infty \exp_\phi \biggl(\frac{u}{\theta^{m+1}}\biggr)t^m \in \TT.
\end{equation}
Pellarin~\cite[\S 4.2]{Pellarin08} exhibited a partial fraction decomposition,
\[
f_\phi(u;t) = \sum_{n=0}^\infty \frac{\alpha_n u^{q^n}}{\theta^{q^n}-t},
\]
where $\alpha_n$ are the coefficients of $\exp_\phi$ from~\eqref{E:expdef}. From this decomposition we see that $f_\phi(u;t)$ extends to a meromorphic function on $\KK$ with simple poles (when $u\neq 0$) at $t = \theta^{q^n}$, $n = 0$, $1, \ldots$, with respective residues $\Res_{t=\theta^{q^n}} f_\phi(u;t) = -\alpha_n u^{q^n}$. In particular,
\[
\Res_{t=\theta} f_\phi(u;t) = -u.
\]
It follows from~\eqref{E:AGFdef} and the functional equation for $\exp_{\phi}(z)$ that
\begin{equation} \label{E:AGFscalar}
    \phi_t(f_{\phi}(u;t)) = \theta f_{\phi}(u;t) + A_1 f_{\phi}(u;t)^{(1)} + \cdots + A_r f_{\phi}(u;t)^{(r)} = f_{\phi}(\theta u;t),
\end{equation}
and so for each $a \in \bA$, $\phi_a(f_{\phi}(u;t)) = f_{\phi}(a(\theta) u;t)$. Another fundamental property is that
\begin{equation} \label{E:AGFfnq}
  \phi_t(f_{\phi}(u;t)) = t f_{\phi}(u;t) + \exp_{\phi}(u).
\end{equation}
It follows that if $\pi \in \Lambda_{\phi}$, then for all $a \in \bA$, we have $\phi_a(f_{\phi}(\pi;t)) = a f_{\phi}(\pi;t)$.  The partial fraction decomposition of $f_{\phi}(\pi;t)$ implies that it is an element of $\TT_{\eta}$ for any $\eta \in \KK$ with $|\eta| < |\theta|$.  Thus by~\eqref{E:twistTate} we find that $f_{\phi}(\pi;t)^{(1)} \in \TT_{\theta}$, and in particular $f_{\phi}(\pi;t)^{(i)}$ is well-defined at $t=\theta$ for all $i \geqslant 1$.

\textit{Logarithm deformations}. We fix a Drinfeld module $\phi$ of rank~$r$ as in~\eqref{E:DMdef}. For $\xi \in \KK$, El-Guindy and the second author defined a series $\cL_{\phi}(\xi;t)$ in~\cite{EP14}, which is a deformation of $\log_\phi(\xi)$ and is related to Anderson generating functions.  This series is defined using \emph{shadowed partitions} defined in~\cite{EP13} as follows. For $n$, $r\in\NN$, we let $P_r(n)$ be the set of $r$-tuples $(S_1,S_2,\ldots,S_r)$ such that (i) for each $i$, $S_i \subseteq \{0,1,\ldots,n-1\}$, and (ii) the sets $\{S_i + j :1\leqslant i\leqslant r, 0 \leqslant j\leqslant i-1\}$ form a partition of $\{0,1,\ldots,n-1\}$. For $n\in\NN$, define
\begin{equation} \label{E:Bndef}
\cB_n(t) \assign \sum_{\bS \in P_r(n)} \prod_{i=1}^r \prod_{j\in S_i} \frac{A_i^{q^j}}{t-\theta^{q^{i+j}}} \in \KK(t).
\end{equation}
Let $N(\phi) \assign \{1\leqslant i\leqslant r : A_i \neq0\}$. For each $n \in N(\phi)$, let
\begin{equation} \label{E:mu_n}
    \mu_n \assign \frac{\deg A_n -q^n}{q^n -1},
\end{equation}
and let
\begin{equation} \label{E:radius}
   R_\phi \assign |\theta|^{-\mu_m},
\end{equation}
where $m$ is the smallest index in $N(\phi) \assign \{1\leqslant i\leqslant r : A_i \neq 0\}$ such that $\mu_m \geqslant \mu_i$ for every $i\in N(\phi)$.

\begin{remark} \label{R:Rphi}
It was shown in \cite[Rem.~6.11, Thm.~6.13(b)]{EP14} that $R_{\phi} \leqslant P_{\phi}$, where $P_{\phi}$ is the radius of convergence of $\log_\phi(z)$ from~\eqref{E:lograd} and that $R_{\phi}=P_{\phi}$ if $\mu_m > \mu_i$ for all $i \neq m$. We will see in Corollary~\ref{C:Rphi} that in fact $R_{\phi} = P_{\phi}$ in all cases.
\end{remark}

Assuming $|\xi| < R_{\phi}$, we set
\begin{equation} \label{E:Lxit}
\cL_\phi(\xi;t) \assign \sum_{n=0}^\infty \cB_n(t) \xi^{q^n} \in \TT,
\end{equation}
which converges in~$\TT$ with respect to the Gauss norm and, as a function of $t$, converges on the open disk of radius $|\theta|^q$ in $\KK$~\cite[Prop.~6.10]{EP14}.  Furthermore, if $\exp_{\phi}(u) = \xi$ and $|u| < R_{\phi}$, then by~\cite[Thm.~6.13]{EP14},
\[
  \cL_{\phi}(\xi; \theta) = \log_{\phi}(\xi) = u,
\]
and moreover, we recover the Anderson generating function for~$u$, as
\begin{equation} \label{E:LxitAGF}
  \cL_{\phi}(\xi;t) = -(t-\theta) f_{\phi}(u;t).
\end{equation}

\textit{t-motives for Drinfeld modules}. Anderson originally defined $t$-motives in \cite{Anderson86}, which we briefly review. The ring $\KK[t, \tau]$ is the polynomial ring in $t$ and $\tau$ with coefficients in $\KK$ subject to the following relations,
\[
tc = ct, \quad t \tau = \tau t, \quad  \tau c = c^q \tau, \quad c \in \KK.
\]
A \emph{$t$-motive} $M$ is a left $\KK[t, \tau]$-module that is free and finitely generated as a left $\KK[\tau]$-module and for which there is $\ell \in \NN$ with $(t - \theta)^\ell (M/\tau M) = \{0\}$. The rank~$d$ of $M$ as a left $\KK[\tau]$-module is the \emph{dimension} of $M$.  Diverging from Anderson's usage somewhat, we say that $M$ is \emph{abelian} if $M$ is also free and finitely generated as a left $\KK[t]$-module.  In this case the \emph{rank} of $M$ is its rank as a $\KK[t]$-module.

Given our Drinfeld module $\phi : \bA \to \KK[\tau]$, as in~\eqref{E:DMdef}, the \emph{$t$-motive associated to~$\phi$}, denoted $M(\phi)$, is defined as follows: let $M(\phi) \assign \KK[\tau]$ and make $M(\phi)$ into a left $\KK[t]$-module by setting $c t^i \cdot m \assign c m  \phi_{t^{i}}$, for $m \in M(\phi)$, $c \in \KK$.
The $t$-motive $M(\phi)$ is abelian of rank~$r$ and dimension~$1$.

\textit{Rigid analytic trivializations}. Anderson~\cite[\S 2.3]{Anderson86} defined the notion of an abelian $t$-motive to be rigid analytically trivial, which is equivalent to the following definition. Let $\bm \in \Mat_{r\times 1}(M)$ comprise a $\KK[t]$-basis for $M$, and let $\Theta \in  \Mat_r (\KK[t])$ represent multiplication by $\tau$ on $M$ with respect to $\bm$, i.e., $\tau \bm = \Theta \bm$.  It is known that $\det \Theta = c(t-\theta)^d$ for some $c \in \KK^{\times}$, where $d$ is the dimension of $M$ (see~\cite[\S 3.2]{Anderson86} or~\cite[Prop.~3.2.5]{NP21}). Then $M$ is \emph{rigid analytically trivial} if there exists $\Upsilon \in \GL_r (\TT)$ that satisfies
\begin{equation} \label{E:RATeq}
 \Upsilon^{(1)} = \Theta\Upsilon.
\end{equation}
The matrix $\Upsilon$ is called a \emph{rigid analytic trivialization} for $M$ with respect to~$\Theta$.

Returning to the situation  of our Drinfeld module $\phi$ from~\eqref{E:DMdef} and its associated $t$-motive $M(\phi) = \KK[\tau]$, one checks that $\{ 1, \tau, \dots, \tau^{r-1}\}$ forms a $\KK[t]$-basis for $M(\phi)$ (e.g., see \cite[\S 4.1]{Anderson86}, \cite[\S 5.4]{Goss}, \cite[\S 7.3]{Thakur}). If we let $\bm = (1, \tau, \cdots \tau^{r-1})^{\mathrm{tr}}$, then it follows (e.g., see \cite[Ex.~4.6.7]{NP21}, \cite[\S 4.2]{Pellarin08}) that $\tau \bm = \Theta\bm$, where $\Theta$ is defined in~\eqref{E:Theta}.

\begin{proposition}[{Pellarin~\cite[\S 4.2]{Pellarin08}}] \label{P:RAT}
Let $\phi : \bA \to \KK[\tau]$ be a Drinfeld module of rank~$r$.  Let $\pi_1, \dots, \pi_r$ be a basis of $\Lambda_\phi$, and for $1 \leqslant j \leqslant r$, let $f_j \assign f_{\phi}(\pi_j;t)$.  Then
\[
\Upsilon \assign \begin{pmatrix}
f_1    &   f_2         &  \cdots      &   f_r \\
f_1^{(1)} &   f_2^{(1)}   &  \cdots      &   f_r^{(1)} \\
\vdots    &   \vdots      &  \ddots      &   \vdots  \\
f_1^{(r-1)} &   f_2^{(r-1)}   &  \cdots      &   f_r^{(r-1)}
\end{pmatrix} \in \GL_r(\TT)
\]
is a rigid analytic trivialization for $\phi$ with respect to $\Theta$ in~\eqref{E:Theta}.
\end{proposition}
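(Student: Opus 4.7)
The proof breaks into three tasks: verifying the functional equation $\Upsilon^{(1)} = \Theta \Upsilon$, checking that the entries of $\Upsilon$ lie in $\TT$, and establishing that $\det \Upsilon \in \TT^{\times}$. The functional equation is essentially forced by the shape of $\Theta$. For $1 \leqslant i \leqslant r-1$, the $i$-th row of $\Upsilon^{(1)}$ is $(f_1^{(i)},\dots,f_r^{(i)})$, which is the $(i+1)$-st row of $\Upsilon$, matching the shift action of the top $r-1$ rows of $\Theta$. For the bottom row, since $\pi_j \in \Lambda_{\phi}$ gives $\exp_{\phi}(\pi_j) = 0$, equation~\eqref{E:AGFfnq} becomes $\phi_t(f_j) = t f_j$, and combining this with~\eqref{E:AGFscalar} yields
\[
  \theta f_j + A_1 f_j^{(1)} + \cdots + A_r f_j^{(r)} = t f_j.
\]
Solving for $f_j^{(r)}$ reproduces exactly the bottom row of $\Theta \Upsilon$.

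The containment of the entries in $\TT$ follows from the preliminaries: $f_j \in \TT$ by definition, and since its partial fraction decomposition shows $f_j \in \TT_\eta$ for any $|\eta| < |\theta|$, the twist property~\eqref{E:twistTate} gives $f_j^{(i)} \in \TT_{\theta} \subseteq \TT$ for $1 \leqslant i \leqslant r-1$. Cofactor expansion of $\Theta$ along the first column yields $\det \Theta = (-1)^{r+1}(t-\theta)/A_r$, and so taking determinants of $\Upsilon^{(1)} = \Theta\Upsilon$ gives the key functional equation
\[
  (\det \Upsilon)^{(1)} = \frac{(-1)^{r+1}(t-\theta)}{A_r} \, \det \Upsilon.
\]

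The main obstacle is proving $\det \Upsilon \neq 0$; given this, the displayed functional equation forces $\det \Upsilon$ to be a unit, because any zero $t_0$ with $|t_0| \leqslant 1$ would, via $D^{(1)}(s) = D(s^{1/q})^q$ together with the equation above, produce the orbit $\{t_0^{1/q^n}\}_{n\geqslant 0}$ of zeros in the closed unit disk, which (since $|t_0| < |\theta|$ excludes cancellation from the $(t-\theta)$ factor) leads via Weierstrass preparation to a contradiction unless $t_0$ is a root of unity, a case we can rule out through a direct Gauss norm computation that pins down $\lVert \det\Upsilon \rVert = |(-\theta)/A_r|^{1/(q-1)}$ and hence $\det\Upsilon$ to be a unit after factoring out any roots-of-unity zeros. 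To handle the remaining non-vanishing, I would suppose a nontrivial relation $\sum_j v_j \bv_j = 0$ on the columns $\bv_j$ of $\Upsilon$ with $v_j \in \TT$, then extract residues of $\sum_j v_j f_j$ at the poles $t = \theta^{q^n}$, each of which equals $-\alpha_n \sum_j v_j(\theta^{q^n}) \pi_j^{q^n}$; simultaneous vanishing of all these, combined with the $A$-linear independence of the period basis $\pi_1,\dots,\pi_r$ in $\Lambda_{\phi}$, would force $\bv = 0$. An alternative route, which I expect to be cleaner in practice, is to invoke Anderson's general existence theorem to produce \emph{some} $\Upsilon_0 \in \GL_r(\TT)$ with $\Upsilon_0^{(1)} = \Theta\Upsilon_0$, observe that $\Upsilon_0^{-1}\Upsilon$ has entries in $\TT$ and is Frobenius-fixed, whence $\Upsilon_0^{-1}\Upsilon \in \Mat_r(\FF_q[t])$, and then use the residue data in the top row at $t = \theta$ (which recovers the $A$-basis $\pi_1,\dots,\pi_r$) to rigidify $\Upsilon_0^{-1}\Upsilon$ to lie in $\GL_r(\FF_q)$, yielding $\Upsilon \in \GL_r(\TT)$ at once.
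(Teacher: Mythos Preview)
Your verification of the functional equation $\Upsilon^{(1)} = \Theta\Upsilon$ is correct and matches the paper's sketch. The derivation of $(\det\Upsilon)^{(1)} = (-1)^{r+1}(t-\theta)/A_r \cdot \det\Upsilon$ is also the right starting point. The genuine gap is in your route~(a) to invertibility. Your orbit argument correctly shows that a zero $t_0$ of $D \assign \det\Upsilon$ in the closed unit disk forces $D(t_0^{1/q^n}) = 0$ for all $n$, and Weierstrass preparation then rules out all $t_0$ except those with $t_0 \in \overline{\FF}_q$ (where the orbit is finite). But the Gauss norm computation you propose cannot finish the job: writing $D = \gamma\, p(t)\, \omega_C$ with $p \in \FF_q[t]$ (which your functional equation gives, after dividing by $\omega_C$ and noting the quotient is Frobenius-fixed up to a scalar), one has $\dnorm{D} = |\gamma|\,\dnorm{\omega_C}$ for \emph{every} nonzero $p \in \FF_q[t]$, since $\dnorm{p} = 1$. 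So knowing $\dnorm{D}$ does not distinguish $p = 1$ from $p = 1+t$, and the phrase ``a unit after factoring out any roots-of-unity zeros'' is exactly what must be excluded, not assumed. Your residue argument for non-vanishing also has a wrinkle: the poles $\theta^{q^n}$ lie outside the closed unit disk, so a relation $\sum v_j f_j = 0$ with $v_j \in \TT$ does not a priori extend there; a cleaner route is to observe that the constant term of $\Upsilon$ is the Moore matrix of the $\FF_q$-basis $\exp_\phi(\pi_j/\theta)$ of $\phi[t]$, whence $D(0) \neq 0$.

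The paper's approach is precisely to push the comparison with $\omega_C$ all the way: one shows $D$ is a nonzero element of $\KK^{\times}$ times $\omega_C$ (i.e., that the polynomial $p$ above is in $\FF_q^{\times}$), which immediately gives $D \in \TT^{\times}$ since $\omega_C \in \TT^{\times}$. The paper defers this computation to references (\cite[\S 6]{GezmisP19}, \cite[Prop.~4.3.10]{NP21}, \cite[\S 3.4]{CP12}), where the constant is identified explicitly. Your alternative route~(b), invoking an abstract $\Upsilon_0$ from Anderson's theorem and comparing via $\Upsilon_0^{-1}\Upsilon \in \Mat_r(\FF_q[t])$, is in the right spirit, but as stated it is incomplete: Anderson's existence result gives no control over the residues of $\Upsilon_0$ at $t=\theta$, so you cannot ``rigidify'' the comparison matrix to $\GL_r(\FF_q)$ without first establishing that the residue map from the $\FF_q[t]$-module of solutions of $\bv^{(1)} = \Theta\bv$ to $\Lambda_\phi$ is an isomorphism---which is essentially equivalent to what you are trying to prove.
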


The proof of the functional equation follows from applications of~\eqref{E:AGFfnq}.  To show that $\det \Upsilon \in \TT^{\times}$ involves showing that it is a constant multiple of $\omega_C$ (see for example \cite[\S 6]{GezmisP19} or \cite[Prop.~4.3.10]{NP21} (note that `$\Upsilon$' in \cite{NP21} would be `$\Upsilon^{(1)}$' in the present paper)).  The reader is directed to \cite[\S 4]{Pellarin08} (or \cite[\S 3.4]{CP12}, \cite[\S 4.3]{NP21}) for more details.

When $r=2$, Pellarin~\cite[\S 2]{Pellarin12} showed that, as we vary $\phi$, we can realize $\Upsilon$ in terms of vector valued Drinfeld modular forms, which led to special value identities for what are now called Pellarin $L$-series.

\section{\texorpdfstring{$t$}{t}-power torsion and matrix estimates} \label{S:MatEst}
We fix a Drinfeld module $\phi : \bA \to \KK[t]$ as in~\eqref{E:DMdef}, together with the data assembled in \S\ref{S:Prelim}, including $\Theta \in \Mat_r(\KK[t])$ from~\eqref{E:Theta}.  The main goal of this section (Theorem~\ref{T:B}) is to construct a matrix $B \in \Mat_r(\KK[t]) \cap \GL_r(\TT)$ so that
\begin{equation} \label{E:one}
      \dnorm{B^{-1} \Theta^{-1} B^{(1)} - I} <1.
\end{equation}
We further strive for this construction to rely on only a finite amount of initial calculation of $t$-power torsion points of~$\phi$. The purpose of~\eqref{E:one} is the following. We first note that since $\det \Theta = \pm(t-\theta)/A_r \in \TT^{\times}$, we have $\Theta \in \GL_r(\TT)$.  By letting $F \assign B^{-1}\Theta^{-1}B^{(1)} \in \GL_r(\TT)$, we obtain that
\[
\dnorm{F^{(n)}-I} = \dnorm{F-I}^{q^n} \to 0, \quad n \to \infty.
\]
Thus the infinite product $\prod_{n=1}^\infty F^{(n)}$ converges in $\Mat_r(\TT)$ with respect to the Gauss norm, and its determinant is in~$\TT^{\times}$. And so by defining $\Pi \assign B F F^{(1)} F^{(2)} \cdots \in \GL_r(\TT)$,
\begin{equation} \label{E:PiRAT}
  \Pi^{(1)} = B^{(1)} F^{(1)} F^{(2)} \cdots = \Theta B F F^{(1)} F^{(2)}\cdots = \Theta\Pi,
\end{equation}
i.e., $\Pi$ is a rigid analytic trivialization for $\phi$ with respect to~$\Theta$ (see Corollary~\ref{C:PiRAT}).

To construct $B$ we make judicious choices of polynomials in $\KK[t]$ of the form
\[
  h = \phi_{t^{\ell-1}}(\xi) + \phi_{t^{\ell-2}}(\xi) t + \cdots + \xi t^{\ell-1}, \quad \xi \in \phi[ t^{\ell}].
\]
There are two things to note about such polynomials. The first is that if $\phi_{t^{\ell-1}}(\xi) \in \phi[t]$ is nonzero, then there is some nonzero period $\pi \in \Lambda_\phi$ so that $\phi_{t^{\ell-1}}(\xi) = \exp_\phi(\pi/\theta)$.  Thus as elements of $\TT$,
\begin{equation} \label{E:hcong}
  h \equiv f_{\phi}(\pi;t) \pmod{t^{\ell}},
\end{equation}
and so $h$ is a truncation of an Anderson generating function. Though, as $\pi$ is not uniquely determined by $\xi$, neither is $f_{\phi}(\pi;t)$. Furthermore, taking $\phi_t$ to be an operator on~$\TT$ as in \eqref{E:Deltaop}, one quickly checks that
\begin{equation} \label{E:phi(h)}
  \phi_t(h) - th = -\xi t^{\ell},
\end{equation}
which is a truncated version of~\eqref{E:AGFfnq}. Ultimately we will consider such polynomials for $\xi_1, \dots, \xi_r \in \phi[t^{\ell}]$ for which $\phi_{t^{\ell-1}}(\xi_1), \dots, \phi_{t^{\ell-1}}(\xi_r)$ form a \emph{strict basis} of $\phi[t]$ (see Definition~\ref{D:Strict}).  These $\FF_q$-bases of $\phi[t]$ arise in a directly similar manner to computations of Maurischat~\cite[Thm.~3.1]{Maurischat19a} for rank~$2$ Drinfeld modules and successive minimum bases of~$\Lambda_\phi$ defined by Gekeler~\cite[\S 1.3]{Gekeler19}.

\begin{proposition} \label{P:F-I}
Let $\ell \geqslant 1$. For $j=1,\dots,r$, we fix $\xi_j \in \phi[t^{\ell}]$, and let $h_j \assign \sum_{m=0}^{\ell-1} \phi_{t^{\ell-1-m}}(\xi_j)t^m$. Let
\begin{equation} \label{E:B}
B \assign \begin{pmatrix}
           h_1     	& h_2      	&  \ldots  & h_r  \\
           h_1^{(1)}   	& h_2^{(1)}  	&  \ldots  & h_r^{(1)}  \\
           \vdots 	&  \vdots 	&  \ddots  &  \vdots \\
          h_1^{(r-1)} & h_2^{(r-1)} &  \ldots  &  h_r^{(r-1)}
          \end{pmatrix},
\end{equation}
and assume $\det B \neq 0$. Then
\[
B^{-1}\Theta^{-1}B^{(1)} - I =  -\frac{t^{\ell}}{t-\theta} B^{-1} W,
\]
where
\begin{equation}  \label{E:W}
W =  \begin{pmatrix}
\xi_1   &  \xi_2   &  \cdots    &   \xi_r   \\
0       &     0    &  \cdots    &   0       \\
\vdots  & \vdots   &  \ddots    &  \vdots   \\
0       &     0    &  \cdots    &   0
\end{pmatrix}.
\end{equation}
\end{proposition}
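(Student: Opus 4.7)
The plan is to verify the equivalent identity
\[
   B^{(1)} \;=\; \Theta B \;-\; \frac{t^{\ell}}{t-\theta}\,\Theta W,
\]
and then left-multiply by $B^{-1}\Theta^{-1}$ to obtain the claim. Since $\det \Theta = \pm(t-\theta)/A_r$, this division by $t-\theta$ is harmless after multiplying through by $\Theta$, and the assumption $\det B \neq 0$ lets us invert $B$.

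To compare $B^{(1)}$ with $\Theta B$, I would work column by column. Fix $j$ and look at the $j$-th column of each side. The top $r-1$ rows of $\Theta$ are simply $e_2^{\tr}, \dots, e_r^{\tr}$, so for $1 \leqslant i \leqslant r-1$,
\[
  (\Theta B)_{ij} \;=\; h_j^{(i)} \;=\; (B^{(1)})_{ij},
\]
and these rows of $\Theta W$ vanish because only the top row of $W$ is nonzero. Hence the first $r-1$ rows of the proposed identity hold automatically, and everything reduces to a single computation in row~$r$.

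For row $r$ I would use the last row of $\Theta$ to compute
\[
  (\Theta B)_{rj} \;=\; \frac{t-\theta}{A_r}\,h_j \;-\; \sum_{i=1}^{r-1}\frac{A_i}{A_r}\,h_j^{(i)}.
\]
Adding and subtracting $h_j^{(r)}$ lets me rewrite the sum as $\frac{1}{A_r}(\theta h_j + A_1 h_j^{(1)} + \cdots + A_r h_j^{(r)}) = \frac{1}{A_r}\phi_t(h_j)$, which uses the definition of $\phi_t$ acting on $\TT$ from~\eqref{E:Deltaop}. This gives
\[
  (\Theta B)_{rj} \;=\; h_j^{(r)} \;+\; \frac{t\,h_j - \phi_t(h_j)}{A_r}.
\]
Now I invoke~\eqref{E:phi(h)}, which says precisely that $\phi_t(h_j) - t h_j = -\xi_j t^{\ell}$, so $(\Theta B)_{rj} = h_j^{(r)} + \xi_j t^{\ell}/A_r = (B^{(1)})_{rj} + \xi_j t^{\ell}/A_r$. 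Meanwhile, $(\Theta W)_{rj} = (t-\theta)\xi_j/A_r$, so
\[
  \frac{t^{\ell}}{t-\theta}\,(\Theta W)_{rj} \;=\; \frac{\xi_j t^{\ell}}{A_r},
\]
matching the discrepancy exactly. Combining the two cases yields $B^{(1)} = \Theta B - \tfrac{t^{\ell}}{t-\theta}\,\Theta W$ on the nose.

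I expect no real obstacle here: the only substantive input is~\eqref{E:phi(h)}, and the rest is a direct verification row by row using the shape of $\Theta$. The slight care needed is just the algebraic rearrangement in row~$r$ so that the expression $\phi_t(h_j)$ appears, together with the observation that the shape of $W$ ensures $\Theta W$ is concentrated in its last row in a way that is compatible with the last-row discrepancy of $\Theta B - B^{(1)}$.
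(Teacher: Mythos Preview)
Your proof is correct and is essentially the same as the paper's: both verify the equivalent matrix identity using \eqref{E:phi(h)} for the one nontrivial row and trivial shifts for the others. The only cosmetic difference is that the paper computes $(t-\theta)\Theta^{-1}B^{(1)}$ (so the nontrivial row is the first, yielding $w_j = A_1 h_j^{(1)}+\cdots+A_r h_j^{(r)} = (t-\theta)h_j - t^{\ell}\xi_j$ directly), whereas you compute $\Theta B$ (so the nontrivial row is the last); these are the same computation up to multiplying through by~$\Theta$.
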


\begin{proof}
We first observe that
\begin{align} \label{E:matrixidentity}
 (t-\theta)\Theta^{-1} B^{(1)} &= \begin{pmatrix}
		    A_1    &     A_2    &  \cdots    &   A_{r-1}    &   A_r \\
                 t-\theta   &     0      &  \cdots    &   0          &   0 \\
                     0      &  t-\theta  &  \cdots    &   0          &   0 \\
                   \vdots   &   \vdots  & \ddots     &  \vdots      &  \vdots  \\
                     0      &     0      &   \cdots   & t-\theta     &   0
\end{pmatrix}
\begin{pmatrix}
           h_1^{(1)} & h_2^{(1)}  &  \ldots  &h_r^{(1)}  \\
           h_1^{(2)} & h_2^{(2)}  &  \ldots  &h_r^{(2)}  \\
           \vdots &   \vdots &  \ddots  &  \vdots \\
           h_1^{(r)} & h_2^{(r)}  &  \ldots  &h_r^{(r)}
\end{pmatrix}\\
     &=
\begin{pmatrix}
w_1                	 & w_2  			& \cdots 	& w_r \\
(t-\theta) h_1^{(1)} 	 & (t-\theta) h_2^{(1)} 	& \cdots 	& (t-\theta) h_r^{(1)} \\
\vdots                   &  \vdots  			&  \ddots  	&  \vdots \\
(t-\theta) h_1^{(r-1)}   & (t-\theta) h_2^{(r-1)}  	& \cdots 	& (t-\theta) h_r^{(r-1)} \\
\end{pmatrix} \notag
\end{align}
where $w_j= A_1 h_j^{(1)} + \cdots + A_r h_j^{(r)} $. For each $j=1,\dots,r$, \eqref{E:phi(h)} implies $w_j = \phi_{t}(h_j) - \theta h_j = (t-\theta)h_j - t^{\ell}\xi_j$. Thus
\[
 (t-\theta)\Theta^{-1} B^{(1)} = -t^{\ell} W +(t-\theta)B,
\]
and the result follows by multiplying through by $(t-\theta)^{-1}B^{-1}$.
\end{proof}

\begin{remark} \label{R:condition}
By the above proposition, if $B \in \GL_r(\TT)$ so in particular $\det(B)$ is invertible in $\TT$, we have $\dnorm{B^{-1}\Theta^{-1}B^{(1)} - I} =  \dnorm{t^{\ell}/(t-\theta) \cdot B^{-1}W}$. Since $\dnorm{t^{\ell}/(t-\theta)} = 1/q$, it follows that proving $\dnorm{B^{-1}\Theta^{-1}B^{(1)} - I}  < 1$ is equivalent to showing
\begin{equation} \label{E:norm<q}
    \dnorm{B^{-1}W} < q.
\end{equation}
The remainder of this section is devoted to finding $\ell \geqslant 1$ and $\xi_1, \dots, \xi_r \in \phi[t^{\ell}]$ so that $B \in \GL_r(\TT)$ and this inequality holds.
\end{remark}

Suppose that $B$ has been chosen as in Proposition~\ref{P:F-I} and that $B \in \Mat_r(\KK[t]) \cap \GL_r(\TT)$.  For each $j$, we note that if
\[
|\xi_j| < |\phi_t(\xi_j)| < \dots < |\phi_{t^{\ell-1}}(\xi_j)|,
\]
then we obtain $\dnorm{h_j} = |\phi_{t^{\ell-1}}(\xi_j)|$. This will facilitate estimating $\dnorm{B^{-1}W}$ and prompts our next investigations.

Consider the Newton polygon $\Gamma$ (see Figure~\ref{F:NewtPoly}) of the polynomial
\[
\phi_t(x) = \theta x + A_1 x^q +\cdots + A_r x^{q^r} \in \KK[x].
\]
Letting $s \geqslant 1$ be the number of edges of $\Gamma$, we denote its vertices by $(q^{d_j},{-\deg A_{d_j}})$ for $0 \leqslant j \leqslant s$. We note that $0 = d_0 < d_1 < \dots < d_s = r$ and that each $d_j$ is an element of $N(\phi)$ as defined in \S\ref{S:Prelim}. For $0 \leqslant n < m \leqslant r$, define $L_{n,m}$ to be the line segment connecting vertices $(q^n,-\deg A_n)$ and $(q^m,-\deg A_m)$ and let $w_{n,m}$ be its slope. For $1 \leqslant j \leqslant s$, let $\lambda_j \assign w_{d_{j-1},d_j}$. We observe that $\lambda_1 < \lambda_2 < \dots < \lambda_s$ and that the line segments $L_{d_{0},d_1}, \dots, L_{d_{s-1},d_s}$ form the edges of $\Gamma$. For more information on Newton polygons see \cite[\S 2]{Goss} or \cite[\S VI.1.6]{Robert}.

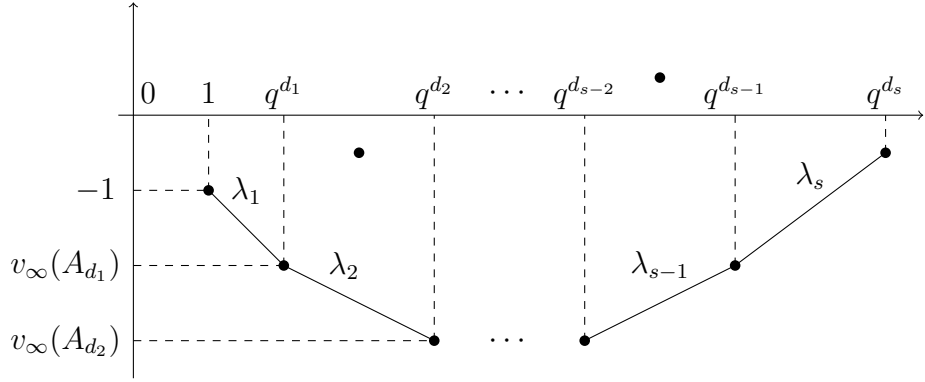
\begin{figure}[tb]
\begin{tikzpicture}
    \draw[->] (-0.2,0) -- (10.5,0);
    \draw[->] (0,-3.5) -- (0,1.5);

    \draw[domain=1:2] plot (\x,{- \x}) ;
    \draw[domain=2:4] plot (\x,{-1- 0.5 * \x}) ;
    \draw[domain=6:8] plot (\x,{-2+ 0.5 * (\x-8)});
    \draw[domain=8:10] plot (\x,{-2+  0.75*(\x-8)}) ;

    \node at (1.5,-1) {$\lambda_1$};
    \node at (2.8,-2) {$\lambda_2$};
    \node at (7,-2) {$\lambda_{s-1}$};
    \node at (9,-0.8) {$\lambda_s$};

    \node at (0.2,0.3) {$0$};
    \node at (1,0.3) {$1$};
    \node at (2,0.3) {$q^{d_1}$};
    \node at (4,0.3) {$q^{d_2}$};
    \node at (5,0.3) {$\cdots$};
    \node at (6,0.3) {$q^{d_{s-2}}$};
    \node at (8,0.3) {$q^{d_{s-1}}$};
    \node at (10,0.3) {$q^{d_s}$};

    \node at (-0.5,-1) {$-1$};
    \node at (-0.9,-2) {$v_\infty(A_{d_1})$};
    \node at (-0.9,-3) {$v_\infty(A_{d_2})$};

    \node at (5,-3) {$\cdots$};

    \draw[dashed] (0,-1) -- (1,-1);
    \draw[dashed] (0,-2) -- (2,-2);
    \draw[dashed] (0,-3) -- (4,-3);

    \draw[dashed] (1,-1) -- (1,0);
    \draw[dashed] (2,-2) -- (2,0);
    \draw[dashed] (4,-3) -- (4,0);
    \draw[dashed] (6,-3) -- (6,0);
    \draw[dashed] (8,-2) -- (8,0);
    \draw[dashed] (10,-0.5) -- (10,0);

   \foreach \coord in {
  {(1,-1)},
  {(2,-2)},
  {(3,-0.5)},
  {(4,-3)},
  {(6,-3)},
  {(7,0.5)},
  {(8,-2)},
  {(10,-0.5)}
   } {
  \fill \coord circle (2pt) ;
   }
\end{tikzpicture}
\caption{Newton polygon $\Gamma$ of $\phi_t(x)$.}
\label{F:NewtPoly}
\end{figure}

\begin{lemma} \label{L:poly}
Recall the definitions of $\mu_n$ for $n\in N(\phi)$ and $\mu_m$ from~\eqref{E:mu_n} and~\eqref{E:radius}. For $1 \leqslant j \leqslant s$, let $a_j$ be the $y$-intercept of the line containing $L_{d_{j-1},d_j}$. The following hold.
\begin{enumerate}
\item $a_1 = \mu_m$.
\item $a_1 > a_2 > \dots > a_s$.
\item $-a_j \geqslant -\left( \dfrac{\deg A_{d_j} - q^{d_j} }{q^{d_j} -1} \right)$ for every $j=1, \dots ,s$.
\end{enumerate}
\end{lemma}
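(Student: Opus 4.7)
The strategy is geometric: both $a_j$ and $\mu_n$ admit clean interpretations as $y$-intercepts of specific lines in the plane of the Newton polygon $\Gamma$. Indeed, one checks directly that $\mu_n = (\deg A_n - q^n)/(q^n - 1)$ is the $y$-intercept of the line through $(1,-1)$ and $(q^n, -\deg A_n)$, while by definition $a_j$ is the $y$-intercept of the line through $(q^{d_{j-1}}, -\deg A_{d_{j-1}})$ and $(q^{d_j}, -\deg A_{d_j})$.

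For (1), since $d_0 = 0$ with $A_0 = \theta$ so that $(q^{d_0}, -\deg A_{d_0}) = (1,-1)$, the line containing the edge $L_{d_0, d_1}$ already passes through $(1,-1)$, which gives $a_1 = \mu_{d_1}$ immediately from the two interpretations. To identify this value with $\mu_m$, I would invoke the lower convex hull property of $\Gamma$: every point $(q^n, -\deg A_n)$ with $n \in N(\phi)$ lies on or above the extension of $L_{d_0, d_1}$, forcing the slope of the line from $(1,-1)$ to $(q^n, -\deg A_n)$ to be at least $\lambda_1$. Since any line through the fixed point $(1,-1)$ has $y$-intercept equal to $-1$ minus its slope, a larger slope produces a smaller $y$-intercept, yielding $\mu_n \leqslant \mu_{d_1}$ for every $n \in N(\phi)$. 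Hence $\mu_{d_1}$ attains the maximum of the $\mu_n$, and by definition of $m$ we conclude $\mu_m = \mu_{d_1} = a_1$.

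For (2), I evaluate both line equations $y = a_{j-1} + \lambda_{j-1} x$ and $y = a_j + \lambda_j x$ at the shared vertex $x = q^{d_{j-1}}$ (where each equals $-\deg A_{d_{j-1}}$), obtaining $a_{j-1} - a_j = (\lambda_j - \lambda_{j-1}) q^{d_{j-1}}$, which is strictly positive because convexity of $\Gamma$ gives $\lambda_{j-1} < \lambda_j$.

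For (3), let $\mu_j^* \assign (1 - \deg A_{d_j})/(q^{d_j} - 1)$ denote the slope of the line through $(1,-1)$ and $(q^{d_j}, -\deg A_{d_j})$. Equating the heights of the two extended lines at $x = q^{d_j}$ in the same manner as in (2) yields $a_j - \mu_{d_j} = (\mu_j^* - \lambda_j) q^{d_j}$, which reduces the claim to $\mu_j^* \leqslant \lambda_j$. But $\mu_j^*$ is the average slope of the polygon chain from $(1,-1)$ to $(q^{d_j}, -\deg A_{d_j})$; telescoping the contributions of the successive edges $L_{d_{i-1}, d_i}$ for $i = 1, \dots, j$ gives
\[
  (q^{d_j} - 1)\, \mu_j^* = \sum_{i=1}^j (q^{d_i} - q^{d_{i-1}})\, \lambda_i,
\]
which exhibits $\mu_j^*$ as a convex combination of $\lambda_1 < \cdots < \lambda_j$, hence $\mu_j^* \leqslant \lambda_j$ as needed. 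The only real obstacle is setting up this geometric dictionary cleanly; once it is installed, all three parts reduce to elementary properties of convex lattice polygons and the strict increase of the slopes $\lambda_j$.
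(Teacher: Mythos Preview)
Your proof is correct and takes essentially the same approach as the paper: both identify $\mu_n$ and $a_j$ as $y$-intercepts of lines attached to the Newton polygon and then invoke convexity. Your treatment of~(2) and~(3) is more explicit (computing $a_{j-1}-a_j=(\lambda_j-\lambda_{j-1})q^{d_{j-1}}$ and reducing~(3) to the inequality $\mu_j^*\leqslant\lambda_j$ via a convex-combination identity), whereas the paper argues these directly from the picture; conversely, for~(1) the paper splits into $\mu_{d_1}\geqslant\mu_m$ (convexity) and $\mu_{d_1}\leqslant\mu_m$ (definition of $m$), while you show $\mu_{d_1}=\max_n\mu_n$ outright---but these are the same argument.
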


\begin{proof}
Recalling the definitions of $\mu_n$ for $n\in N(\phi)$, we observe that $\mu_n$ is the $y$-intercept of the line through the points $(-1,1)$ and $(q^n,-\deg A_n)$. Due to the convexity of the Newton polygon, we have $\mu_{d_1}\geqslant \mu_m$. On the other hand, it follows from the definition of $\mu_m$ that $\mu_{d_1}\leqslant \mu_m$. So $\mu_{d_1} = \mu_m$, and it is clear that $a_1=\mu_{d_1}$ which proves (1).
For part~(2), since $\lambda_n < \lambda_{n+1}$, we have that $a_n > a_{n+1}$ (see Figure~\ref{F:SucMin}). To prove~(3), we recognize that again due to the convexity of the Newton polygon, the $y$-intercept $a_j$ must be at most the $y$-intercept $\mu_{d_j}$. Therefore,
\[
    -a_j \geqslant -\left( \frac{\deg A_{d_j} - q^{d_j} }{q^{d_j} -1} \right)
\]
for every $j=1, \dots ,s$.
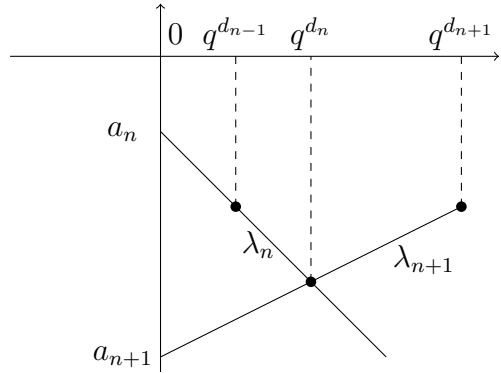
\begin{figure}[tb]
   \begin{tikzpicture}
    \draw[->] (-2,0) -- (4.5,0);
    \draw[->] (0,-4.2) -- (0,0.7);

    \draw[domain=0:3] plot (\x,{- \x-1}) ;
    \draw[domain=0:4] plot (\x,{-4+ 0.5 * \x}) ;

    \node at (1.3,-2.5) {$\lambda_n$};
    \node at (3.5,-2.7) {$\lambda_{n+1}$};
    \node at (0.2,0.3) {0};
    \node at (1,0.3) {$q^{d_{n-1}}$};
    \node at (2,0.3) {$q^{d_n}$};
    \node at (4,0.3) {$q^{d_{n+1}}$};
    \node at (-0.5,-1) {$a_n$};
    \node at (-0.5,-4) {$a_{n+1}$};

    \draw[dashed] (1,-2) -- (1,0);
    \draw[dashed] (2,-3) -- (2,0);
    \draw[dashed] (4,-2) -- (4,0);

   \foreach \coord in {
  {(1,-2)},
  {(2,-3)},
  {(4,-2)}
   } {
  \fill \coord circle (2pt) ;
   }
\end{tikzpicture}
\caption{Demonstrating $a_n > a_{n+1}$.}
\label{F:SucMin}
\end{figure}
\end{proof}

Building on the preceding lemma, we produce an algorithm to choose a sequence of torsion points $y_1$, $y_2, \ldots$ on $\phi$ that form a $t$-division sequence above~$0$ (that is convergent in the sense of \cite[\S 2.5.3]{HartlJuschka20}).

\begin{proposition} \label{P:algo}
Given nonzero $y_1\in \phi[t]$, there exists a recursive algorithm to choose $y_2$, $y_3, \ldots \in \KK$ such that
\begin{enumerate}
\item $\phi_t(y_k) = y_{k-1}$ for $k \geqslant 2$,
\item $\deg y_1 > \deg y_2  > \deg y_3 > \cdots$,
\item there exists a positive integer $N$ such that  $|y_N| < R_\phi$,
\item $\lim_{k\to\infty} \deg y_k  = -\infty$,
\item for $k\geqslant N$, $y_k$ is uniquely determined by $y_N$.
\end{enumerate}
\end{proposition}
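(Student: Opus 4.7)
The plan is to define the recursive algorithm by taking $y_k$ to be any root of the polynomial $\phi_t(x) - y_{k-1} \in \KK[x]$ whose degree is minimal among all roots, and then to verify (1)--(5) through a Newton polygon analysis of $\phi_t(x) - y_{k-1}$. Condition (1) then holds by construction, and each $y_k$ remains nonzero by induction since $\phi_t(0) = 0 \neq y_{k-1}$.

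For (2), I would inspect the Newton polygon of $\phi_t(x) - y_{k-1}$, which has left endpoint $(0, -\deg y_{k-1})$ and contains the vertex $(1,-1)$ coming from the term $\theta x$. Hence the slope of its leftmost edge is at most the slope
\[
\frac{-1 - (-\deg y_{k-1})}{1} = \deg y_{k-1} - 1
\]
of the segment joining these two points. Interpreting leftmost slope as the degree of the smallest-degree root, this gives $\deg y_k \leqslant \deg y_{k-1} - 1 < \deg y_{k-1}$, proving (2). Because the degree decreases by at least one at every step, (4) follows immediately, and (3) follows by letting $N$ be the first index with $\deg y_N < -\mu_m = \deg R_\phi$.

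The main work is in (5). Here I would argue that whenever $\deg y_{k-1} < -\mu_m$, the segment from $(0, -\deg y_{k-1})$ to $(1,-1)$ is already the whole leftmost edge of the Newton polygon, of horizontal length one. Indeed, for each $i \in N(\phi)$ with $i \geqslant 1$, the slope from $(0, -\deg y_{k-1})$ to $(q^i, -\deg A_i)$ is $(-\deg A_i + \deg y_{k-1})/q^i$, and the inequality
\[
\deg y_{k-1} - 1 \;<\; \frac{-\deg A_i + \deg y_{k-1}}{q^i}
\]
rearranges to $\deg y_{k-1} < (q^i - \deg A_i)/(q^i-1) = -\mu_i$, which is guaranteed by the defining property $\mu_m \geqslant \mu_i$ together with $\deg y_{k-1} < -\mu_m$. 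Thus the leftmost edge has horizontal length exactly one and produces a unique root of degree $\deg y_{k-1} - 1$. Consequently, as soon as we enter the regime $|y_{k-1}| < R_\phi$ (which happens at $k-1 \geqslant N$), the choice of smallest-degree root is forced, and inductively $y_{N+1}, y_{N+2}, \ldots$ are determined by $y_N$.

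The main obstacle is the slope comparison underlying (5); it is a short but crucial calculation that leverages the extremality of $\mu_m$ among the $\mu_i$. Everything else is a direct consequence of (i) the geometry of the leftmost edge of the Newton polygon and (ii) the uniform step-size-one decrease in degree that this edge already guarantees.
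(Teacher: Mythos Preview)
Your proposal is correct and follows essentially the same Newton-polygon approach as the paper: both select a root of $\phi_t(x)-y_{k-1}$ of minimal degree, use that the point $(1,-1)$ forces the leftmost slope to be at most $\deg y_{k-1}-1$, and establish uniqueness once $\deg y_{k-1}<-\mu_m$ by showing the leftmost edge then has horizontal length~$1$. The only difference is presentational: the paper partitions into intervals $I_1,\dots,I_s$ determined by the $y$-intercepts $a_j$ (setting up the effective bound on $N$ in the subsequent remark), whereas you argue directly without this case split; also, be careful that $(1,-1)$ need not be a \emph{vertex} of the Newton polygon, though your slope bound still follows since the leftmost slope is the minimum over slopes to all supporting points.
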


\begin{proof}
Since $y_1$ is a root of $\phi_t(x)$, we see that $\deg y_1 \leqslant \lambda_s$, as $\lambda_s$ is the slope of the final segment of the Newton polygon~$\Gamma$. For $k\geqslant 1$, we perform the following recursive process. Suppose $\deg y_k \leqslant \lambda_s$ and set $y \assign y_k$. Consider the Newton polygon of $\phi_t(x) - y$, which is obtained from $\Gamma$ by adding one more point $(0,-\deg y)$. We observe that $-\deg y$ must belong to one of the following intervals:
\[
I_1 \assign (a_1,\infty), \quad I_2 \assign (a_2,a_1], \quad \ldots, \quad I_s \assign (a_s,a_{s-1}],
\]
where $a_1,\dots,a_s$ are defined in Lemma~\ref{L:poly}. To see why $-\deg y > a_s$, we claim that $-\lambda_s > a_s$, which follows from the following observation.
For a line through the point $(1,-1)$ the sum of its $y$-intercept and its slope is $-1$. If the line runs below the point $(1,-1)$, this sum is less than $-1$. We conclude that $a_s+\lambda_s \leqslant -1$, which makes $a_s < -\lambda_s $. Since $-\deg y \geqslant -\lambda_s$, we have $-\deg y > a_s$ as claimed.

Now for $n \in N(\phi)$, we define the rational function $u_n(z) \assign (z-\deg A_n)/q^n$.
\begin{enumerate}
\item[(i)]  If $-\deg y \in (a_1,\infty)$, then the Newton polygon of $\phi_t(x) - y$ is obtained from $\Gamma$ by adding the line segment from $(0,-\deg y)$ to $(1,-1)$. This new segment has slope $\deg y -1 = u_0(\deg y)$, so there is exactly one root of $\phi_t(x) - y$ with degree equal to $u_0(\deg y)$.
\item[(ii)]  If $-\deg y \in (a_{j+1},a_j]$ for some $1\leqslant j \leqslant s-1$, then the Newton polygon of $\phi_t(x) - y$ is obtained from $\Gamma$ by replacing line segments $L_{d_0,d_1}$, $L_{d_1,d_2}, \dots L_{d_{j-1},d_j}$ by the line segment from $(0,-\deg y)$ to $(q^{d_j},-\deg A_{d_j})$. This new segment has slope $(\deg y - \deg A_{d_j})/q^{d_j} = u_{d_j}(\deg y)$, and so there are $q^{d_j}$ roots of $\phi_t(x) - y$ with degree equal to $u_{d_j}(\deg y)$.
\end{enumerate}
Choose $y_{k+1}$ to be a root of $\phi_t(x) - y$ with
\[
 \deg y_{k+1} =
\begin{cases}
  u_0(\deg y) & \textup{if $-\deg y \in (a_1,\infty)$,} \\
  u_{d_j}(\deg y) &  \textup{if $-\deg y \in (a_{j+1},a_j]$.}
\end{cases}
\]

First we prove the inequality
\begin{equation} \label{E:ykineq}
\deg y_{k+1} \leqslant  \deg y_{k}-1.
\end{equation}
For the first case we note simply that $u_0(\deg y_{k}) = \deg y_{k} - 1$. For the second case, note that $\deg y_{k+1} = u_{d_j}(\deg y_{k})$ only if $-\deg y_{k} \in (a_{j+1},a_j]$ for some $1 \leqslant j \leqslant s-1$. Lemma~\ref{L:poly}(3) then implies that $\deg y_k \geqslant -a_j \geqslant  (q^{d_j} - \deg A_{d_j})/(q^{d_j} -1)$, and after some straighforward manipulations we deduce $\deg y_{k+1} = (\deg y_{k} - \deg A_{d_j})/q^{d_j} \leqslant \deg y_{k} -1$. In summary, in all cases we obtain a root $y_{k+1}$ of $\phi_t(x) - y_k$, which satisfies $\deg y_{k+1} \leqslant \deg y_k -1 <  \deg y_k \leqslant \lambda_s$. This proves~\eqref{E:ykineq}, as well as sets up the next step in the recursion, proving~(1) and~(2). The inequality in~\eqref{E:ykineq} also readily proves~(3) and~(4). By~\eqref{E:radius} and Lemma~\ref{L:poly}(1), we see that once $\deg y_k < -a_1$, we are in case (i) above, and moreover $y_{k+1}$ is uniquely determined by $y_k$. Thus, taking $N$ sufficiently large as in~(3), we obtain~(5).
\end{proof}

\begin{remark}  \label{R:N}
To make the recursion in Proposition~\ref{P:algo} effective, we further determine~$N$ explicitly from the given data. That is, we determine $N$ so that $\deg y_N < -a_1$. From~\eqref{E:radius} and~\eqref{E:ykineq}, we see that this will occur when
\[
N > \deg y_1 - \log_q(R_\phi) + 1.
\]
However, with some small effort this can in general be improved. Continuing with the notation in the proof,
we first note that
\begin{equation} \label{E:uncirck}
u_n^{\circ k} (z) = \frac{z}{q^{nk}} - \left(\frac{q^{nk}-1}{q^n-1}\right) \frac{\deg A_n}{q^{nk}},
\end{equation}
where $u_n^{\circ k}(z)$ is the $k$-th iterate of $u_n$ under composition. If $-\deg y_1 \in I_1 = (a_1,\infty)$, then we already have $\deg y_1 < -a_1$ and we can choose $N=1$.
Now assume that $-\deg y_1 \in I_{j+1}$ for some $1\leqslant j\leqslant s-1$. By Lemma \ref{L:poly},
\[
\lim_{k\to\infty} u_{d_j}^{\circ k} \bigl( \deg y_1 \bigr) = -\frac{\deg A_{d_j}}{q^{d_j}-1}  < -a_j ,
\]
so there exists a smallest integer $k_1$ so that $u_{d_j}^{\circ k_1} (\deg y_1) < -a_j$. Therefore $-\deg y_{k_1 +1} = -u_{d_j}^{\circ k_1} (\deg y_1) \in I_1 \cup \cdots \cup I_{j}$. Repeating the same argument, we can choose the smallest integer $k_2\geqslant0$ that makes $u_{d_{j-1}}^{\circ k_2} (\deg y_{k_1 +1}) < -a_{j-1}$, and thus
\[
-\deg y_{k_2 + k_1 +1} = -u_{d_{j-1}}^{\circ k_2} (\deg y_{k_1 +1}) \in I_1 \cup \cdots \cup I_{j-1}.
\]
Continuing inductively, we finally obtain $k_j\geqslant 0$ with $u_{d_1}^{\circ k_j} (\deg y_{k_{j-1} + \cdots + k_1 +1}) < -a_1$, i.e.,
\[
-\deg y_{k_j + \cdots + k_1 +1} = -u_{d_j}^{\circ k_j} (\deg y_{k_{j-1} + \cdots + k_1 +1}) \in I_1.
\]
Letting
\begin{equation} \label{E:Ndef}
N = 1+ k_1 + \cdots + k_j,
\end{equation}
we thus obtain $-\deg y_N \in I_1$, i.e., $\deg y_N < -a_1$.
Moreover, we observe that
\[
\deg y_N = u_{d_1}^{\circ k_j} \circ  \cdots \circ u_{d_{j-1}}^{\circ k_2} \circ u_{d_j}^{\circ k_1} (\deg y_1).
\]
\end{remark}

In the next proposition we explain how the recursive process in Proposition~\ref{P:algo} can be used to produce $\xi_1,\dots,\xi_r$ that are suitable inputs for Proposition~\ref{P:F-I}. Although a sequence $\{y_k\}$ obtained from Proposition~\ref{P:algo} is infinite, we construct $\xi_1, \dots, \xi_r$ after only finitely many applications of the recursive algorithm defining such sequences.

\begin{proposition} \label{P:good-xi}
Let $x_1,\dots,x_r$ be an $\FF_q$-basis of $\phi[t]$. Then there exists $N\geqslant 1$ and $\xi_1, \dots, \xi_r \in \phi[t^N]$ such that for each $j=1, \dots, r$,
\begin{enumerate}
\item $|\xi_j| < R_\phi$,
\item $\phi_{t^{N-1}}(\xi_j) = x_j$,
\item $\deg \phi_{t^{N-1}}(\xi_j) >  \dots > \deg \phi_t(\xi_j) > \deg \xi_j$.
\end{enumerate}
\end{proposition}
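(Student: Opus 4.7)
The plan is to apply Proposition~\ref{P:algo} separately to each basis element $x_j$ and then align the indices by taking a common, sufficiently large $N$.

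First, for each $j=1,\dots,r$, I would invoke Proposition~\ref{P:algo} with the initial choice $y_1^{(j)} \assign x_j$ (which is nonzero as part of a basis of $\phi[t]$). This produces an infinite sequence $y_1^{(j)}, y_2^{(j)}, y_3^{(j)}, \ldots$ in $\KK$ satisfying $\phi_t(y_k^{(j)}) = y_{k-1}^{(j)}$ for $k \geqslant 2$, with strictly decreasing degrees, and so by part~(3) of Proposition~\ref{P:algo} there exists a smallest positive integer $N_j$ with $|y_{N_j}^{(j)}| < R_\phi$. Next I would set
\[
  N \assign \max_{1 \leqslant j \leqslant r} N_j \quad \text{and} \quad \xi_j \assign y_N^{(j)}.
\]
The key observation is that the sequence $\{y_k^{(j)}\}$ is defined for \emph{all} $k \geqslant 1$, so taking the same index $N$ across all $j$ causes no difficulty, even though each individual $N_j$ may be smaller.

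Next I would verify the three claimed properties together with $\xi_j \in \phi[t^N]$. For membership in the torsion submodule, applying $\phi_t$ repeatedly through Proposition~\ref{P:algo}(1) gives $\phi_{t^{N-1}}(\xi_j) = y_1^{(j)} = x_j$, which simultaneously establishes~(2), and then $\phi_{t^N}(\xi_j) = \phi_t(x_j) = 0$ since $x_j \in \phi[t]$. Property~(3) is immediate from Proposition~\ref{P:algo}(2), since the entire sequence $\deg y_1^{(j)} > \deg y_2^{(j)} > \cdots > \deg y_N^{(j)}$ is strictly decreasing, and the listed chain of inequalities in~(3) is exactly $\deg y_1^{(j)} > \deg y_2^{(j)} > \cdots > \deg y_N^{(j)}$ read in terms of $\phi_{t^{N-m}}(\xi_j) = y_m^{(j)}$. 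For property~(1), since $N \geqslant N_j$ and the degrees strictly decrease, we obtain $\deg \xi_j = \deg y_N^{(j)} \leqslant \deg y_{N_j}^{(j)}$, so $|\xi_j| \leqslant |y_{N_j}^{(j)}| < R_\phi$.

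There is essentially no hard step here; the argument is bookkeeping on top of Proposition~\ref{P:algo}. The only point requiring a moment of care is the passage from the individual thresholds $N_j$ to the uniform index $N$, and this is handled by the fact that Proposition~\ref{P:algo} produces an entire infinite $t$-division sequence (so extending past $N_j$ is free) together with the strict monotonicity of degrees, which guarantees that once $|y_k^{(j)}| < R_\phi$ holds at $k = N_j$, it persists for all larger $k$.
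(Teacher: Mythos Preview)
Your proof is correct and follows essentially the same approach as the paper: apply Proposition~\ref{P:algo} to each $x_j$, take $N$ to be the maximum of the resulting thresholds $N_j$, set $\xi_j$ to be the $N$-th term of the $j$-th sequence, and read off properties (1)--(3) from parts (1)--(3) of Proposition~\ref{P:algo}. One purely cosmetic remark: in this paper the superscript $(\cdot)^{(n)}$ is reserved for Frobenius twisting, so your notation $y_k^{(j)}$ would be better written as $x_{j,k}$ or similar to avoid confusion.
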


\begin{proof}
For $1 \leqslant j \leqslant r$, apply Proposition~\ref{P:algo} to $x_j$, obtaining a sequence $x_{j,1}$, $x_{j,2}, \ldots$ with the designated properties ($x_{j,1} = x_j$).  We let $N_j$ be the positive integer from Proposition~\ref{P:algo}(3), e.g., from~\eqref{E:Ndef}. Let $N \assign \max\{N_1, \dots, N_r\}$, and for $1 \leqslant j \leqslant r$, let $\xi_j \assign x_{j,N}$. Then for each~$j$, Proposition~\ref{P:algo} implies
\[
\phi_{t^{k}}(\xi_j) = x_{j,N-k}, \quad 0 \leqslant k \leqslant N-1.
\]
This proves~(2).  Parts~(1) and~(3) are then consequences of Proposition~\ref{P:algo}(2)--(3).
\end{proof}

\begin{definition} \label{D:Strict}
An $\FF_q$-basis $x_1, \dots, x_r$ of $\phi[t]$ is \emph{strict} if for $1\leqslant j \leqslant r$, we have $\deg x_j = \lambda_k$, where $d_{k-1}+1\leqslant j\leqslant d_k$.
\end{definition}

\begin{lemma}[{cf.\ Gekeler~\cite[Prop.~1.4]{Gekeler19}}] \label{L:Strict}
There exists a strict $\FF_q$-basis of $\phi[t]$.
\end{lemma}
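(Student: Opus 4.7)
The plan is to construct a strict basis from a natural flag on $\phi[t]$ cut out by the Newton polygon of $\phi_t(x)$.  Because $\phi_t(x) = \theta x + A_1 x^q + \cdots + A_r x^{q^r}$ has nonzero linear coefficient $\theta$, it is separable with $q^r$ distinct roots that make up $\phi[t]$.  By standard Newton polygon theory applied to the polygon $\Gamma$, exactly $q^{d_k} - q^{d_{k-1}}$ of the $q^r - 1$ nonzero roots have degree $\lambda_k$ for $1 \leqslant k \leqslant s$; these counts telescope to $q^r - 1$, so the set $\{\lambda_1,\dots,\lambda_s\}$ exhausts the possible degrees of nonzero elements of $\phi[t]$.

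For $0 \leqslant k \leqslant s$, I would define $V_k \assign \{x \in \phi[t] : \deg x \leqslant \lambda_k\}$ (with $\deg 0 = -\infty$) and set $V_0 \assign \{0\}$.  Since $\deg(x+y) \leqslant \max(\deg x,\deg y)$ and $\deg(cx) = \deg x$ for $c \in \FF_q^{\times}$, each $V_k$ is an $\FF_q$-subspace of $\phi[t]$.  Counting yields
\[
|V_k| \;=\; 1 + \sum_{j=1}^{k}\bigl(q^{d_j} - q^{d_{j-1}}\bigr) \;=\; q^{d_k},
\]
so $\dim_{\FF_q} V_k = d_k$, producing the strictly increasing flag $\{0\} = V_0 \subsetneq V_1 \subsetneq \cdots \subsetneq V_s = \phi[t]$.

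To extract a strict basis, I would build one stage by stage along this flag: pick any $\FF_q$-basis $x_1,\dots,x_{d_1}$ of $V_1$, then extend to a basis of $V_2$, and continue through $V_s$.  For any $j$ with $d_{k-1} < j \leqslant d_k$, the vector $x_j$ lies in $V_k \setminus V_{k-1}$, so $\lambda_{k-1} < \deg x_j \leqslant \lambda_k$.  Because the only degrees attained by nonzero elements of $\phi[t]$ are $\lambda_1,\dots,\lambda_s$, this forces $\deg x_j = \lambda_k$, verifying the strictness condition of Definition~\ref{D:Strict}.

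The only real substance is verifying that $V_k$ is genuinely $\FF_q$-linear with cardinality $q^{d_k}$, and this rests on separability of $\phi_t(x)$ together with the Newton polygon count of roots of each degree.  Once the flag is in hand, the construction of the basis is routine linear algebra, so I anticipate no further obstacle.
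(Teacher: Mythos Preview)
Your proposal is correct and follows essentially the same approach as the paper: defining the flag $V_k = \{x \in \phi[t] : \deg x \leqslant \lambda_k\}$ (the paper calls it $Q_k$), computing $\dim_{\FF_q} V_k = d_k$ from the Newton polygon root count, and then extending a basis through the flag while observing that any new vector at stage~$k$ must have degree exactly~$\lambda_k$. The paper's presentation is slightly terser about the cardinality of $Q_k$, but the substance is identical.
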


\begin{proof}
For $1\leqslant k \leqslant s$, define
\begin{equation} \label{E:Qk}
Q_k \assign \{ x\in \phi[t]: \deg x \leqslant \lambda_k \}, \quad
R_k \assign \{ x\in \phi[t]: \deg x =  \lambda_k \}.
\end{equation}
We observe that $Q_k$ is an $\FF_q$-subspace of $\phi[t]$ for each $k$. Since $Q_1 = R_1\sqcup \{0\}$ and the set $Q_1$ has $q^{d_1}$ elements, there exist $x_1,\dots,x_{d_1} \in R_1$ such that
\[
Q_1 = \FF_q x_1 \oplus \dots \oplus \FF_q x_{d_1}.
\]
Now $Q_1 \subseteq Q_2$ and the set $Q_2$ has $q^{d_2}$ elements, so we can pick $x_{d_1 + 1},\dots,x_{d_2}$ in $Q_2$ such that
\[
Q_2 = \FF_q x_1 \oplus \dots \oplus \FF_q x_{d_1} \oplus \FF_q x_{d_1 + 1} \oplus \dots \oplus \FF_q x_{d_2}.
\]
We claim that for every $d_1 + 1 \leqslant j \leqslant d_2$, we have $\deg x_j = \lambda_2$. Indeed fix $d_1 + 1 \leqslant j \leqslant d_2$ and suppose that $\deg x_j < \lambda_2$. Since $x_j$ is a nonzero element in $\phi[t]$, we have $\deg x_j \in \{\lambda_1,\ldots,\lambda_s\}$, and $\deg x_j < \lambda_2$ implies that $\deg x_j \leqslant \lambda_1$. But then $x_j\in Q_1$, violating the $\FF_q$-linear independence of $x_1, \dots, x_{d_2}$. We continue inductively to produce an $\FF_q$-basis $x_1, \dots, x_{d_s}$ of $Q_s$, with $\{ x_{d_{k-1}+1}, \dots, x_{d_{k}} \} \subseteq R_k$ for each $1 \leqslant k \leqslant s$.  As $d_s = r$ and $Q_s = \phi[t]$, we are done.
\end{proof}

\begin{definition} \label{D:data}
For the remainder of this section, we fix the following data.  Pick a strict basis $x_1, \dots, x_r$ of $\phi[t]$. Choose $N\geqslant 1$ and $\xi_1, \dots, \xi_r \in \phi[t^N]$ as in Proposition~\ref{P:good-xi}.  For $1\leqslant j \leqslant r$, we set
\[
   h_j \assign \phi_{t^{N-1}} (\xi_j) + \phi_{t^{N-2}} (\xi_j) t  + \cdots +  \xi_j t^{N-1} \in \KK[t].
\]
Proposition~\ref{P:good-xi}(3) implies that
\begin{equation} \label{E:h_jnorm}
\dnorm{h_j} = \max_{0\leqslant m \leqslant N-1} \bigl\{ |\phi_{t^{N-1-m}} (\xi_j)| \bigr\} = |x_j|
\end{equation}
and moreover $|x_j|$ is the unique maximum among the norms of the coefficients of $h_j$. Fix $B$ to be the $r\times r$ matrix with $(i,j)$-entries defined by $[B]_{ij} \assign h_j^{(i-1)}$ as in~\eqref{E:B}. For the rest of this section, our goal is to prove that~$B$ is in $\GL_r(\TT)$ and satisfies \eqref{E:norm<q}.  Thus by Proposition~\ref{P:F-I} (and Remark~\ref{R:condition}), $B$ will satisfy~\eqref{E:one} as desired.
\end{definition}

\begin{lemma} \label{L:P_s}
Let $x_1,\dots,x_r$ be a strict basis of $\phi[t]$.  For $1\leqslant k \leqslant s$ and $d_{k-1}+1\leqslant j\leqslant d_k$,
\[
 \deg(c_1 x_1 + \dots + c_{j-1} x_{j-1}+  x_j ) = \lambda_k = \deg x_j,
\]
for every $c_1,\dots, c_{j-1} \in \FF_q$.
\end{lemma}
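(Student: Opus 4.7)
The plan is to exploit the filtration $Q_1 \subset Q_2 \subset \cdots \subset Q_s = \phi[t]$ defined in the proof of Lemma~\ref{L:Strict}, which refines the degree stratification of $\phi[t]$. By the Newton polygon analysis leading to that lemma, every nonzero element of $\phi[t]$ has degree equal to one of $\lambda_1, \dots, \lambda_s$, and $Q_\ell = \{x \in \phi[t] : \deg x \leqslant \lambda_\ell\}$ is the $\FF_q$-span of $x_1, \dots, x_{d_\ell}$.

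Fix $k$ and $j$ as in the statement. I would split the linear combination as $c_1 x_1 + \dots + c_{j-1} x_{j-1} + x_j = u + v$, where
\[
   u \assign c_1 x_1 + \dots + c_{d_{k-1}} x_{d_{k-1}}, \qquad
   v \assign c_{d_{k-1}+1} x_{d_{k-1}+1} + \dots + c_{j-1} x_{j-1} + x_j,
\]
with the convention that $u = 0$ when $k = 1$. Since $u \in Q_{k-1}$, either $u=0$ or $\deg u \leqslant \lambda_{k-1} < \lambda_k$.

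Next I would show $\deg v = \lambda_k$. Note $v \neq 0$ because the coefficient of $x_j$ is~$1$, and each term in the defining sum for $v$ has degree $\lambda_k$ (by strictness of the basis), so $\deg v \leqslant \lambda_k$. If we had $\deg v < \lambda_k$, then $\deg v \in \{\lambda_1, \dots, \lambda_{k-1}\}$ and hence $v \in Q_{k-1}$. Writing $v = \sum_{i=1}^{d_{k-1}} a_i x_i$ for some $a_i \in \FF_q$ and rearranging would produce a nontrivial $\FF_q$-linear relation among $x_1, \dots, x_j$, contradicting the linear independence of the strict basis. Thus $\deg v = \lambda_k$.

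Finally, since $\deg u < \lambda_k = \deg v$, the non-archimedean (ultrametric) property of $\deg$ gives $\deg(u+v) = \deg v = \lambda_k$, which is the desired conclusion. There is no real obstacle here; the only subtlety is the observation that possible degrees of nonzero elements of $\phi[t]$ are restricted to $\{\lambda_1, \dots, \lambda_s\}$, which is what allows the inequality $\deg v < \lambda_k$ to be sharpened to membership in $Q_{k-1}$.
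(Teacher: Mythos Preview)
Your proof is correct and follows essentially the same approach as the paper's: both split off the tail $v$ (the paper calls it $z$) consisting of terms with indices in $\{d_{k-1}+1,\dots,j\}$, use the restriction of possible degrees to $\{\lambda_1,\dots,\lambda_s\}$ to force $v\in Q_{k-1}$ under the contrary hypothesis, and then derive a contradiction with the linear independence of $x_1,\dots,x_j$. The only cosmetic difference is that you first establish $\deg v = \lambda_k$ and then invoke the ultrametric inequality to conclude $\deg(u+v)=\lambda_k$, whereas the paper assumes $\deg y \leqslant \lambda_{k-1}$ directly and deduces $\deg z \leqslant \lambda_{k-1}$; these are equivalent organizations of the same argument.
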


\begin{proof}
Let $y = c_1 x_1 + \dots + c_{j-1} x_{j-1} +  x_j$, and let $z = c_{d_{k-1}+1}x_{d_{k-1}+1} + \cdots + c_{j-1} x_{j-1} + x_j$. We observe that $\deg y \leqslant \deg x_j = \lambda_k$. If  $\deg y \leqslant  \lambda_{k-1}$, then it must be the case that $\deg z \leqslant \lambda_{k-1}$, whence $z \in Q_{k-1}$ as in~\eqref{E:Qk}.  As $x_1, \dots, x_{d_{k-1}}$ is an $\FF_q$-basis of $Q_{k-1}$, this would violate the $\FF_q$-linear independence of $x_1, \dots, x_j$.
\end{proof}

\begin{lemma} \label{L:detX}
Let $x_1 , \dots, x_r$ be a strict basis for $\phi[t]$, and fix $X \in \Mat_r(\KK)$ so that $[X]_{ij} = x_j^{q^{i-1}}$. Then
\[
\deg(\det X) = \sum_{j=1}^r q^{j-1}\deg x_j.
\]
\end{lemma}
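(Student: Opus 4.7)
The plan is to recognize $X = (x_j^{q^{i-1}})$ as a Moore matrix and invoke Moore's classical determinant identity: for any elements $x_1,\dots,x_r$ of a field of characteristic $p$, one has, up to a sign depending on conventions,
\[
  \det(x_j^{q^{i-1}})_{1 \leqslant i,j \leqslant r} = \prod_{j=1}^{r}\, \prod_{(c_1,\dots,c_{j-1}) \in \FF_q^{j-1}} \bigl( x_j + c_1 x_1 + \cdots + c_{j-1} x_{j-1} \bigr).
\]
Since a strict basis is in particular $\FF_q$-linearly independent, none of the factors on the right vanishes, so the identity is a genuine equality in $\KK^{\times}$.

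Applying $\deg$ to both sides and using that $\deg$ is additive on $\KK^{\times}$ yields
\[
  \deg(\det X) = \sum_{j=1}^{r}\, \sum_{(c_1,\dots,c_{j-1}) \in \FF_q^{j-1}} \deg \bigl( x_j + c_1 x_1 + \cdots + c_{j-1} x_{j-1} \bigr).
\]
The content of Lemma~\ref{L:P_s} is precisely that each inner summand equals $\deg x_j$, regardless of the choice of $c_1,\dots,c_{j-1} \in \FF_q$. Since $\FF_q^{j-1}$ has exactly $q^{j-1}$ elements, the inner sum contributes $q^{j-1}\deg x_j$, and summing over~$j$ gives the claimed identity.

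The only delicate point is that Moore's identity is used; since we only need $\deg(\det X)$, any sign ambiguity is irrelevant. An alternative route would be to expand $\det X$ via the Leibniz formula and show that the ``diagonal'' term $\prod_i x_i^{q^{i-1}}$ dominates $\infty$-adically. However, this becomes subtle when several $x_j$ share the same degree (within the same $\lambda_k$-block of the strict basis), for then the diagonal term is not the \emph{unique} term of maximal degree, and one must rule out cancellation among the competing leading contributions---a ruling that essentially reduces to a Moore-type argument anyway. This is why the Moore-formula route is cleaner and will be the main obstacle to overcome only in citing the right form of the identity.
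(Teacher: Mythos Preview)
Your proof is correct and follows essentially the same approach as the paper's: both invoke the Moore determinant identity (the paper cites \cite[Cor.~1.3.7]{Goss}), then apply Lemma~\ref{L:P_s} to see that each factor $x_j + c_1 x_1 + \cdots + c_{j-1} x_{j-1}$ has degree $\deg x_j$, and finally count the $q^{j-1}$ tuples. Your additional remarks on the sign and on the Leibniz-formula alternative are correct but not needed.
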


\begin{proof}
As $X$ is a Moore matrix, we obtain from~\cite[Cor.~1.3.7]{Goss} that
\[
\det X =  \prod_{j=1}^r \prod_{c_1,\ldots, c_{j-1}\in \FF_q} (c_1 x_1 + \cdots + c_{j-1} x_{j-1} + x_j ).
\]
For $1\leqslant k \leqslant s$, let $P_k = \{d_{k-1}+1, \dots , d_k \}$. Thus by Lemma~\ref{L:P_s},
\[
\deg(\det X)
= \sum_{k=1}^s \sum_{j\in P_k} \sum_{c_1,\ldots, c_{j-1}\in \FF_q} \deg x_j
= \sum_{k=1}^s \sum_{j\in P_k} q^{j-1}\deg x_j,
\]
which easily converts to the desired formula.
\end{proof}

The following modification of the identity in Lemma~\ref{L:detX} can be obtained by the same argument and will be useful in Proposition~\ref{P:detB}. For $1 \leqslant j_1 < \cdots < j_{n} \leqslant r$ and $1 \leqslant i \leqslant n$, consider the $n \times n$ matrix $\tX$ so that $[\tX]_{i\ell} = \bigl( x_{j_\ell}^{q^{i-1}} \bigr)$. Then
\begin{equation} \label{E:dettX}
  \deg(\det \tX) = \sum_{\ell=1}^n q^{\ell-1} \deg x_{j_{\ell}}.
\end{equation}
We can now compute $\det B$ and show that $B \in \GL_r(\TT)$.

\begin{proposition} \label{P:detB}
For $1\leqslant j \leqslant r$, write $h_j = x_j + y_j t$ with $y_j \in \KK[t]$. Let $B = (h_j^{(i-1)})$ and let $X = \bigl( x_j^{q^{i-1}} \bigr)$. Then
\begin{enumerate}
\item $\det B = \det X + y t$ for some $y \in \KK[t]$ with $\dnorm{y} < \norm{\det X}$,
\item $B \in \GL_r(\TT)$.
\end{enumerate}
\end{proposition}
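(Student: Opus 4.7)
The plan is to analyze the Leibniz expansion of $\det B$ in a way that isolates a dominant term (namely $\det X$) and shows every other monomial contribution is strictly smaller in Gauss norm. The starting observation is that, writing $h_j = x_j + y_j t$ with $y_j \in \KK[t]$, Proposition~\ref{P:good-xi}(3) combined with the computation in~\eqref{E:h_jnorm} gives $\dnorm{y_j} = |\phi_{t^{N-2}}(\xi_j)| < |x_j|$. Twisting preserves this (since $t$ is fixed and coefficients are raised to $q^{i-1}$), yielding $\dnorm{y_j^{(i-1)}} = \dnorm{y_j}^{q^{i-1}} < |x_j|^{q^{i-1}}$ for all $i \geqslant 1$. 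Thus $B = X + Yt$, where $[Y]_{ij} = y_j^{(i-1)} \in \KK[t]$.

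I would then expand $\det B$ by the Leibniz formula over $\sigma \in S_r$ and, within each column, split $[B]_{i,\sigma(i)}$ into its $X$-contribution and its $Yt$-contribution. Every resulting monomial has the shape
\[
  \sgn(\sigma)\, t^{|S|} \prod_{i \in S} y_{\sigma(i)}^{(i-1)} \prod_{i \notin S} x_{\sigma(i)}^{q^{i-1}}
\]
for some $S \subseteq \{1,\dots,r\}$. The $S=\emptyset$ contribution assembles to $\det X$; all other contributions are divisible by $t$, so the writeup $\det B = \det X + ty$ with $y \in \KK[t]$ is automatic.

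The main obstacle is the sharp norm bound on $y$: I must show that every monomial with $S \neq \emptyset$ has Gauss norm strictly less than $|\det X|$. For such a monomial, the strict estimates above give
\[
  \dnorm{\prod_{i \in S} y_{\sigma(i)}^{(i-1)} \prod_{i \notin S} x_{\sigma(i)}^{q^{i-1}}} < \prod_{i=1}^r |x_{\sigma(i)}|^{q^{i-1}},
\]
so it suffices to bound $\prod_i |x_{\sigma(i)}|^{q^{i-1}} \leqslant \prod_j |x_j|^{q^{j-1}} = |\det X|$, the last equality coming from Lemma~\ref{L:detX}. This is where strictness of the basis is used: because $x_1,\dots,x_r$ is a strict basis, $\deg x_j$ is weakly increasing in $j$, and the weights $q^{i-1}$ are increasing in $i$, so the rearrangement inequality applied to $\sum_i q^{i-1}\deg x_{\sigma(i)}$ forces the maximum at $\sigma = \mathrm{id}$. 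Taking the maximum of the coefficient norms over $\sigma$ and $S$, and using the non-Archimedean property of $\dnorm{\,\cdot\,}$, gives $\dnorm{y} < |\det X|$, completing~(1).

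Part~(2) will then be a short consequence of the unit criterion for $\TT$ recalled in \S\ref{S:Prelim}. Writing $\det B = \det X \cdot (1 + g)$ with $g \assign ty/\det X \in \TT$, we have $\dnorm{g} \leqslant \dnorm{y}/|\det X| < 1$, so $\det B \in \TT^\times$ and hence $B \in \GL_r(\TT)$. I anticipate the sole real content of the argument is the rearrangement-plus-strict-basis step in~(1); the rest is formal manipulation of the Leibniz expansion and the Tate algebra unit criterion.
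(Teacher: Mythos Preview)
Your proof is correct and takes a genuinely different route from the paper's. The paper argues by induction on the size of the minors $\tB$ of $B$ obtained by deleting any subset of columns and the last $r-n$ rows: expanding $\det \tB$ along the bottom row and invoking the induction hypothesis for the smaller minors $B_\ell$, together with~\eqref{E:dettX}, yields $\det \tB = \det \tX + \ty t$ with $\dnorm{\ty} < |\det \tX|$. Your argument instead expands $\det B$ directly via the Leibniz formula, splits each factor as $x_{\sigma(i)}^{q^{i-1}} + y_{\sigma(i)}^{(i-1)}t$, and bounds every $S \neq \emptyset$ monomial by $\prod_i |x_{\sigma(i)}|^{q^{i-1}}$; the rearrangement inequality (using that $\deg x_j$ is weakly increasing for a strict basis and that the weights $q^{i-1}$ are increasing) then gives $\prod_i |x_{\sigma(i)}|^{q^{i-1}} \leqslant |\det X|$ by Lemma~\ref{L:detX}. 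This is shorter and more transparent for the proposition as stated.

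One point worth flagging: the paper's inductive proof simultaneously establishes the same conclusion for every minor $\tB$ of the specified shape (see Remark~\ref{R:detBdetX}), and that stronger statement is used in the proof of Theorem~\ref{T:B} to control $\dnorm{\det B^*}$. Your Leibniz-plus-rearrangement argument adapts verbatim to these minors, replacing Lemma~\ref{L:detX} by~\eqref{E:dettX}, so nothing is lost; you should simply note this extension when you invoke Remark~\ref{R:detBdetX} downstream.
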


\begin{proof}
Using the calculation in~\eqref{E:h_jnorm} via Proposition~\ref{P:good-xi}(3), we see that $\dnorm{y_j} < |x_j|$ and so
\[
  h_j^{(i-1)} \in \TT^{\times}, \quad 1 \leqslant i,\, j \leqslant r.
\]
For $1 \leqslant n \leqslant r$, let $\tB$ be any $n \times n$ minor of $B$, where any of the columns of $B$ are removed and all of the final $r-n$ rows of $B$ are removed.  Thus we can choose $1 \leqslant j_1 < \cdots < j_n \leqslant r$ so that $[\tB]_{i\ell} = (h_{j_{\ell}}^{(i-1)})$. If $n=r$, then $\tB$ is simply $B$. Recalling $\tX$ from~\eqref{E:dettX}, we claim that for some $\ty \in \KK[t]$,
\[
  \det \tB = \det \tX + \ty t,
\]
with $\dnorm{\ty} < \norm{\det \tX}$, and moreover that $\det \tB \in \TT^{\times}$.

We proceed by induction on $n$.  When $n=1$, the claim follows since $h_j \in \TT^{\times}$ for each~$j$.  Consider the case $n=2$. Then suppose $h_{j_1} = \tx_1 + \ty_1 t$ and $h_{j_2} = \tx_2 + \ty_2 t$. Then
\[
 \det \tB
 = (\tx_1 + \ty_1 t)( \tx_2^{(1)} + \ty_2^{(1)} t) - (\tx_2 + \ty_2 t)( \tx_1^{(1)} + \ty_1^{(1)} t)
 = \det \tX + \ty t,
\]
where $\ty = \ty_1 h_{j_2}^{(1)} + \tx_1  \ty_2^{(1)} - \ty_2 h_{j_1}^{(1)} - \tx_2 \ty_1^{(1)} \in \KK[t]$.  We quickly verify that
\begin{gather*}
  \dnorm{\ty_1 h_{j_2}^{(1)}} < |\tx_1| \cdot |\tx_2|^q, \quad
  \dnorm{\tx_1 \ty_2^{(1)}} < |\tx_1| \cdot |\tx_2|^q, \\
  \dnorm{\ty_1 h_{j_1}^{(1)}} < |\tx_2| \cdot |\tx_1|^q, \quad
  \dnorm{\tx_2 \ty_1^{(1)}} < |\tx_2| \cdot |\tx_1|^q.
\end{gather*}
Combining these inequalities with~\eqref{E:dettX}, we have $\dnorm{\ty} < \norm{\det \tX}$, and thus $\det \tB \in \TT^{\times}$.

The general argument is similar, though there is more bookkeeping.  Suppose the statement is true for $n-1<r$, and for $1\leqslant \ell \leqslant n$ write $h_{j_\ell} = \tx_{\ell} + \ty_{\ell} t$.
We can express
\begin{equation} \label{E:dettB}
\det \tB = \sum_{\ell=1}^n (-1)^{n+\ell} h_{j_\ell}^{(n-1)} \det B_\ell,
\end{equation}
where $B_\ell$ is the $(n-1)\times (n-1)$-minor matrix obtained by removing the last row and the $\ell$-th column from $\tB$. We observe that
\[
B_\ell =
\begin{pmatrix}
   h_{j_1}      & \cdots   &  h_{j_{\ell-1}}       & h_{j_{\ell+1}}    & \cdots & h_{j_n} \\
   \vdots   & \ddots   & \vdots          & \vdots  & \ddots     & \vdots\\
   h_{j_1}^{(n-2)} & \cdots & h_{j_{\ell-1}}^{(n-2)} & h_{j_{\ell+1}}^{(n-2)} & \cdots & h_{j_\ell}^{(n-2)}
\end{pmatrix}.
\]
By the induction hypothesis, $\det B_\ell = \det X_\ell + b_\ell t$ for some $b_\ell \in \KK[t]$ and $\dnorm{b_\ell} < \norm{\det X_\ell}$ (where $X_\ell$ is the evident $(n-1)\times (n-1)$-minor matrix of $\tX$).  Starting from~\eqref{E:dettB},
\begin{align*}
  \det \tB
  &= \sum_{\ell=1}^n (-1)^{n+\ell} \bigl(\tx_\ell^{q^{n-1}} + \ty_\ell^{(n-1)} t \bigr) (\det X_\ell + b_\ell t) \\
  &= \sum_{\ell=1}^n (-1)^{n+\ell} \bigl(\tx_\ell^{q^{n-1}} \det(X_\ell)  + c_\ell t \bigr), \\
\intertext{where $c_\ell \assign \ty_\ell^{(n-1)}\cdot \det B_\ell + \tx_\ell^{q^{n-1}}b_\ell$, and continuing,}
  &= \det \tX  + \sum_{\ell=1}^n (-1)^{n+\ell} c_\ell t.
\end{align*}
We claim that for each $\ell$, $\dnorm{c_\ell} < \norm{\det \tX}$. Considering the definition of $c_\ell$, we estimate
\[
  \lVert \ty_\ell^{(n-1)} \cdot \det B_\ell \rVert < |\tx_\ell|^{q^{n-1}} \cdot \norm{\det X_\ell}, \quad
  \lVert \tx_\ell^{q^{n-1}} \cdot b_\ell \rVert < |\tx_\ell|^{q^{n-1}} \cdot \norm{\det X_\ell}.
\]
The first inequality follows from $\dnorm{\ty_\ell} < |\tx_\ell|$ and $\dnorm{\det B_\ell} = \norm{\det X_\ell}$, and the second from $\dnorm{b_\ell} < \norm{\det X_\ell}$.  Finally, $|\tx_\ell|^{q^{n-1}} \cdot \norm{\det X_\ell} \leqslant \norm{\det \tX}$ by~\eqref{E:dettX}.  Thus this claim and the induction are complete.
\end{proof}

\begin{remark} \label{R:detBdetX}
From the above proposition, we see that $\dnorm{\det B} = \norm{\det X}$, and for each minor $\tB$ in the proof, $\dnorm{\det \tB} = \norm{\det \tX}$.
\end{remark}

We now prove the main theorem of this section, which verifies that the matrix $B$ we have constructed in Proposition~\ref{P:detB} satisfies~\eqref{E:one}.

\begin{theorem}  \label{T:B}
Let $x_1,\dots,x_r$ be a strict basis of $\phi[t]$. Choose $N\geqslant 1$ and $\xi_1,\dots,\xi_r \in \phi[t^N]$ as in Proposition~\ref{P:good-xi}, and for $1\leqslant j\leqslant r$, define
\[
h_j \assign \phi_{t^{N-1}} (\xi_j) + \phi_{t^{N-2}} (\xi_j) t  + \dots +  \xi_j t^{N-1}.
\]
Let $B \assign (h_j^{(i-1)}) \in \Mat_r(\KK[t])$ as in~\eqref{E:B}. Then $B \in \GL_r(\TT)$ and
\[
\dnorm{B^{-1}\Theta^{-1}B^{(1)} - I} < 1.
\]
\end{theorem}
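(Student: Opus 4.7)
The plan is to handle the two conclusions separately. Invertibility $B \in \GL_r(\TT)$ is immediate from Proposition~\ref{P:detB}(2), so essentially all the work goes into the norm estimate. By Proposition~\ref{P:F-I} together with Remark~\ref{R:condition}, the desired bound $\dnorm{B^{-1}\Theta^{-1}B^{(1)} - I} < 1$ is equivalent to $\dnorm{B^{-1}W} < q$, and since $W$ from~\eqref{E:W} has only its first row nonzero, the product collapses to $[B^{-1}W]_{ij} = [B^{-1}]_{i1}\cdot \xi_j$. Thus the whole theorem reduces to proving the entrywise estimate $\dnorm{[B^{-1}]_{i1}} \cdot |\xi_j| < q$ for every $1 \leqslant i,j \leqslant r$.

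To estimate $[B^{-1}]_{i1}$ I would apply Cramer's rule to write $[B^{-1}]_{i1} = (-1)^{i+1}\det(M_{1i})/\det(B)$, where $M_{1i}$ is the minor of $B$ obtained by deleting the first row and the $i$-th column. The key observation is that $M_{1i}$ coincides with $\widetilde{B}^{(1)}$, where $\widetilde{B}$ is the $(r-1)\times(r-1)$ matrix of the same shape as in Proposition~\ref{P:detB} built from $h_{j_1},\dots,h_{j_{r-1}}$ with $\{j_1 < \cdots < j_{r-1}\} = \{1,\dots,r\}\setminus\{i\}$. Since Frobenius twisting satisfies $\dnorm{f^{(1)}} = \dnorm{f}^q$, applying Proposition~\ref{P:detB} and Remark~\ref{R:detBdetX} gives $\dnorm{\det M_{1i}} = \norm{\det \widetilde{X}}^q$, where $\widetilde{X}$ is the corresponding $(r-1)\times(r-1)$ Moore-type minor of $X = \bigl(x_j^{q^{i-1}}\bigr)$; combined with $\dnorm{\det B} = \norm{\det X}$ and the degree formula~\eqref{E:dettX}, a short reindexing yields
\[
\dnorm{[B^{-1}]_{i1}} = q^{(q-1)\sum_{\ell=1}^{i-1} q^{\ell-1}\deg x_\ell \,-\, q^{i-1}\deg x_i}.
\]

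The remaining task is to show this quantity times $|\xi_j|$ is strictly less than $q$, which is equivalent to
\[
\deg \xi_j < 1 + q^{i-1}\deg x_i - (q-1)\sum_{\ell=1}^{i-1} q^{\ell-1}\deg x_\ell.
\]
Using $\deg x_\ell \leqslant \deg x_i$ for all $\ell \leqslant i$ (from the strict-basis ordering) together with the geometric identity $\sum_{\ell=1}^{i-1} q^{\ell-1} = (q^{i-1}-1)/(q-1)$, the right-hand side collapses to at least $1 + \deg x_i \geqslant 1 + \deg x_1 = 1 + \lambda_1 = -a_1$. Combining this with the input bound $|\xi_j| < R_\phi = q^{-a_1}$ from Proposition~\ref{P:good-xi}(1) and Lemma~\ref{L:poly}(1) closes the argument. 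The main obstacle is precisely this final chain of inequalities: reconciling the Cramer's-rule estimate, whose exponent scales with powers of $q^{i-1}$, against the uniform radius $R_\phi$ coming from the Newton polygon analysis — an alignment that works out thanks to the strict-basis property being compatible with the convexity encoded in the slopes $\lambda_1 < \cdots < \lambda_s$.
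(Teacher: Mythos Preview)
Your proposal is correct and follows essentially the same route as the paper: invertibility via Proposition~\ref{P:detB}, reduction to $\dnorm{B^{-1}W}<q$ via Remark~\ref{R:condition}, Cramer's rule for $[B^{-1}]_{i1}$, norm computation of the minors through Remark~\ref{R:detBdetX} and~\eqref{E:dettX}, and the final comparison against $|\xi_j|<R_\phi=q^{-a_1}$ using $a_1=-1-\lambda_1=-1-\deg x_1$. The only cosmetic difference is in the bounding step: the paper reindexes via $\sum_{\ell=1}^{i-1}q^{\ell}\deg x_\ell \leqslant \sum_{\ell=2}^{i}q^{\ell-1}\deg x_\ell$ using $\deg x_\ell\leqslant\deg x_{\ell+1}$, whereas you use the coarser $\deg x_\ell\leqslant\deg x_i$ together with the geometric sum, but both yield the same final inequality $\log_q\dnorm{m_{i1}\xi_j}<1$.
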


\begin{proof}
That $B \in \GL_r(\TT)$ was proved in Proposition~\ref{P:detB}. By Remark~\ref{R:condition}, we thus see that $\dnorm{B^{-1}\Theta^{-1}B^{(1)} - I}  < 1$ if and only if $\dnorm{B^{-1}W} < q$, where $W$ is defined in~\eqref{E:W} as
\begin{equation*} 
W =  \begin{pmatrix}
\xi_1   &  \xi_2   &  \cdots    &   \xi_r   \\
0       &     0    &  \cdots    &   0       \\
\vdots  & \vdots   &  \ddots    &  \vdots   \\
0       &     0    &  \cdots    &   0
\end{pmatrix}.
\end{equation*} Letting $m_{ij} \assign [B^{-1}]_{ij}$, we find
\[
B^{-1}W
= \begin{pmatrix}
m_{11}\xi_1    &  m_{11}\xi_2    &  \cdots    &   m_{11}\xi_r   \\
m_{21}\xi_1    &  m_{21}\xi_2    &  \cdots    &   m_{21}\xi_r   \\
\vdots         &    \vdots       &  \ddots    &  \vdots         \\
m_{r1}\xi_1    &  m_{r1}\xi_2    &  \cdots    &   m_{r1}\xi_r
\end{pmatrix}.
\]
By definition $\dnorm{B^{-1}W} = \max\{ \dnorm{ m_{i1}\xi_j } : 1 \leqslant i, j \leqslant r \}$. Now fix $i$ and $j$. Expressing $B^{-1}$ in terms of cofactors, we see that $m_{i1} = (-1)^{i+1} (\det B^*)/(\det B)$, where
\[
B^* = \begin{pmatrix}
 h_1^{(1)}   &   \cdots &  h_{i-1}^{(1)}   &  h_{i+1}^{(1)}  &   \cdots  &  h_r^{(1)}  \\
 h_1^{(2)}   &   \cdots &  h_{i-1}^{(2)}   &  h_{i+1}^{(2)}  &   \cdots  &  h_r^{(2)}  \\
 \vdots      &   \ddots &  \vdots	   &  \vdots 	     &  \ddots   &   \vdots  \\
 h_1^{(r-1)}   &   \cdots &  h_{i-1}^{(r-1)}   &  h_{i+1}^{(r-1)}  &   \cdots  &  h_r^{(r-1)}
\end{pmatrix}.
\]
We note that $B^* = \tB^{(1)}$, where $\tB$ is the $(r-1)\times (r-1)$ minor of $B$ obtained by removing the $i$-th column and last row, whose properties were investigated in the proof of Proposition~\ref{P:detB}.  In particular, if as in that proof we let $\tX$ be the corresponding minor of $X$, we see from Remark~\ref{R:detBdetX} that
\[
  \dnorm{\det B} = \norm{\det X}, \quad \dnorm{\det B^*} = \dnorm{\tB^{(1)}} = \norm{\det \tX}^q.
\]
We can use Lemma~\ref{L:detX} to calculate $\norm{\det X}$ and~\eqref{E:dettX} to calculate $\norm{\det \tX}$, and we find
\begin{align} \label{E:mi1xijnorm1}
 \log_q \dnorm{m_{i1} \xi_j} &= \log_q \biggl( \frac{\dnorm{\det B^*}\cdot |\xi_j| }{\dnorm{\det B }} \biggr) \\
&= \sum_{\ell=1}^{i-1} q^\ell \deg x_{\ell} + \sum_{\ell=i+1}^r q^{\ell-1} \deg x_\ell + \log_q |\xi_j| - \sum_{\ell=1}^r q^{\ell-1} \deg x_\ell. \notag
\end{align}
Using that $\deg x_\ell \leqslant \deg x_{\ell+1}$ for all $\ell \leqslant r-1$, after reindexing the sum we find
\[
\sum_{\ell=1}^{i-1} q^\ell \deg x_\ell \leqslant \sum_{\ell=2}^i q^{\ell-1} \deg x_\ell.
\]
Combining this with~\eqref{E:mi1xijnorm1}, we have
\begin{equation} \label{E:mi1xijnorm2}
  \log_q \dnorm{m_{i1} \xi_j} \leqslant -\deg x_1 + \log_q |\xi_j|.
\end{equation}
Since $\xi_1,\dots,\xi_r$ are chosen so that $|\xi_j| < R_\phi = q^{-\mu_m} $, we have $\log_q |\xi_j| < -\mu_m$.
Recall from Lemma~\ref{L:poly}(1) and its proof that
\[
\mu_m = a_1 = p_{d_1} = -1-w_{0,d_1} = -1-\lambda_1 = -1 - \deg x_1.
\]
Continuing with~\eqref{E:mi1xijnorm2}, we then find
\[
  \log_q \dnorm{m_{i1} \xi_j} < -\deg x_1 + 1 + \deg x_1 = 1.
\]
Thus $\log_q \dnorm{B^{-1}W} = \max_{i,j} ( \log_q \dnorm{m_{i1} \xi_j}) < 1$, and so $\dnorm{B^{-1}W} < q$ as sought.
\end{proof}

Based on the discussion at the beginning of this section and in particular~\eqref{E:PiRAT}, the following corollary is immediate.

\begin{corollary} \label{C:PiRAT}
Continuing with the setting of Theorem~\ref{T:B}, the infinite product
\[
  \Pi \assign B \prod_{n=0}^\infty \Bigl( B^{-1} \Theta^{-1} B^{(1)} \Bigr)^{(n)}
\]
converges with respect to the Gauss norm on $\Mat_r(\TT)$ and lies in~$\GL_r(\TT)$.  Moreover,
\[
  \Pi^{(1)} = \Theta \Pi,
\]
and so $\Pi$ is a rigid analytic trivialization for~$\phi$.
\end{corollary}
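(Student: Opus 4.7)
The plan is to prove this directly from Theorem~\ref{T:B}, essentially formalizing the outline already sketched around equation~\eqref{E:PiRAT}. Set $F \assign B^{-1}\Theta^{-1}B^{(1)}$. First I would observe that $F \in \GL_r(\TT)$: by Theorem~\ref{T:B} we have $B \in \GL_r(\TT)$, hence also $B^{(1)} \in \GL_r(\TT)$, and $\det \Theta = \pm (t-\theta)/A_r$ is a unit in $\TT$, so $\Theta \in \GL_r(\TT)$.

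Next I would set $c \assign \dnorm{F-I}$, which Theorem~\ref{T:B} tells us satisfies $c < 1$. Applying the $n$-th Frobenius twist entrywise multiplies Gauss norms by $q^n$-powers on coefficients, so $\dnorm{F^{(n)} - I} = c^{q^n}$. Since the Gauss norm is an ultrametric, $\dnorm{F^{(n)}} \leqslant \max(\dnorm{F^{(n)}-I}, \dnorm{I}) = 1$. Let $P_N \assign \prod_{n=0}^{N} F^{(n)}$. Submultiplicativity of the Gauss norm gives $\dnorm{P_N} \leqslant 1$, and
\[
\dnorm{P_{N+1} - P_N} = \dnorm{P_N \cdot (F^{(N+1)} - I)} \leqslant c^{q^{N+1}} \to 0.
\]
Thus $(P_N)$ is Cauchy in the complete normed $\TT$-module $\Mat_r(\TT)$, so $P \assign \lim_N P_N$ exists, and $\Pi \assign B \cdot P$ is a well-defined element of $\Mat_r(\TT)$.

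To see $\Pi \in \GL_r(\TT)$ it suffices to check $\det \Pi \in \TT^{\times}$. Since $\det B \in \TT^{\times}$ by Proposition~\ref{P:detB}, and $\det \Pi = \det B \cdot \prod_{n=0}^\infty \det(F^{(n)})$, I would apply the same estimate to the scalars: $\det F - 1 = \det F - \det I$ has Gauss norm less than $1$ (since $F - I$ does and each $F^{(n)}$ has norm $\leqslant 1$), so $\det F \in 1 + g$ for some $g \in \TT$ with $\dnorm{g} < 1$, placing $\det F \in \TT^{\times}$; then $\dnorm{(\det F)^{(n)} - 1} = \dnorm{\det F - 1}^{q^n} \to 0$, so the infinite product converges to a unit in $\TT$ by the standard unit criterion recalled in \S\ref{S:Prelim}.

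Finally, for the functional equation, I would simply telescope:
\[
\Pi^{(1)} = B^{(1)} \prod_{n=0}^\infty F^{(n+1)} = B^{(1)} \prod_{n=1}^\infty F^{(n)} = (\Theta B \cdot F) \prod_{n=1}^\infty F^{(n)} = \Theta B \prod_{n=0}^\infty F^{(n)} = \Theta \Pi,
\]
where the rearrangement $B^{(1)} = \Theta B \cdot F$ is the defining relation of $F$. This verifies the rigid analytic trivialization property~\eqref{E:RATeq}. There is no substantive obstacle here; the entire content is packaged in Theorem~\ref{T:B}, and the work is really just checking that the standard convergence argument for infinite products of matrices close to the identity applies in the Tate algebra setting with Frobenius twists accelerating the convergence.
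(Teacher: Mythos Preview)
Your proof is correct and follows exactly the approach the paper sketches in the discussion leading up to~\eqref{E:PiRAT}; the paper in fact declares the corollary immediate from that discussion and gives no separate argument. Your added detail on the Cauchy estimate for the partial products and the unit criterion for $\det \Pi$ simply fills in what the paper leaves to the reader.
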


\section{Product expansions for rigid analytic trivializations} \label{S:prod}
We continue with the constructions and notations of \S\ref{S:MatEst}, especially the results from Theorem~\ref{T:B} and Corollary~\ref{C:PiRAT}.  A natural question to ask is how the matrix $\Pi$ from Corollary~\ref{C:PiRAT} compares to $\Upsilon$ defined by Pellarin in Proposition~\ref{P:RAT}.  By applying results from~\cite{EP14}, we will show in Theorem~\ref{T:Pi} that once we define an appropriate basis $\pi_1, \dots, \pi_r$ for $\Lambda_\phi$, using our already chosen $\xi_1, \dots, \xi_r \in \phi[t^N]$, that in fact $\Pi = \Upsilon$ (cf.\ \cite[\S 3]{KPhD}).

\begin{lemma} \label{L:Pi_n}
Set $F \assign B^{-1} \Theta^{-1} B^{(1)}$. For $n \geqslant 0$, set $\Pi_n \assign B F F^{(1)} \cdots F^{(n)}$.  Let $W$ be the matrix defined in~\eqref{E:W}, and let
\[
R_0 \assign \frac{I}{t-\theta}, \quad R_m \assign \frac{\Theta^{-1} (\Theta^{-1})^{(1)} \cdots (\Theta^{-1})^{(m-1)} }{t-\theta^{q^m}}, \quad m\geqslant 1.
\]
Then for $n\geqslant 0$,
\[
\Pi_n = B - t^N \sum_{m=0}^n R_m W^{(m)} .
\]
\end{lemma}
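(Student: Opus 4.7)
The plan is to proceed by induction on $n$, with the algebraic engine being the twisted form of the identity already established in Proposition~\ref{P:F-I}.

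First, I would extract from the matrix identity \eqref{E:matrixidentity}, by multiplying through by $(t-\theta)^{-1}$, the clean relation
\[
  \Theta^{-1} B^{(1)} = B - \frac{t^N}{t-\theta}\, W = B - t^N R_0 W .
\]
Applying the Frobenius twist $m$ times to this identity then yields
\[
  (\Theta^{-1})^{(m)} B^{(m+1)} = B^{(m)} - \frac{t^N}{t-\theta^{q^m}}\, W^{(m)} ,
\]
which will be the key recursive step driving the induction.

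Next, I would telescope the definition of $\Pi_n$. Since $F = B^{-1} \Theta^{-1} B^{(1)}$ gives $F^{(j)} = (B^{(j)})^{-1} (\Theta^{-1})^{(j)} B^{(j+1)}$, the consecutive $B^{(j)} (B^{(j)})^{-1}$ cancellations in $\Pi_n = B F F^{(1)} \cdots F^{(n)}$ yield
\[
  \Pi_n = \Theta^{-1} (\Theta^{-1})^{(1)} \cdots (\Theta^{-1})^{(n)} B^{(n+1)} .
\]
Writing $S_m \assign \Theta^{-1} (\Theta^{-1})^{(1)} \cdots (\Theta^{-1})^{(m-1)}$ for $m\geqslant 1$ (with $S_0 = I$), so that $R_m = S_m/(t-\theta^{q^m})$ and $\Pi_n = S_{n+1} B^{(n+1)}$, one factors off the rightmost twist and applies the key identity to get, for $n \geqslant 1$,
\[
  \Pi_{n} = S_n \cdot (\Theta^{-1})^{(n)} B^{(n+1)} = S_n B^{(n)} - t^N R_n W^{(n)} = \Pi_{n-1} - t^N R_n W^{(n)} .
\]
The base case $n=0$ reads $\Pi_0 = \Theta^{-1} B^{(1)} = B - t^N R_0 W$, which is the $m=0$ case of the key identity. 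Combined with the recursion $\Pi_n = \Pi_{n-1} - t^N R_n W^{(n)}$, this establishes
\[
  \Pi_n = B - t^N \sum_{m=0}^n R_m W^{(m)}
\]
by a straightforward induction on $n$.

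The argument amounts to careful bookkeeping, and I do not foresee any genuine obstacle. The two points that require attention are the telescoping cancellation in the product defining $\Pi_n$ and the effect of the Frobenius twists on the scalar denominator $1/(t-\theta^{q^m})$ appearing inside $R_m$; everything else is formal manipulation using the single matrix identity supplied by Proposition~\ref{P:F-I}.
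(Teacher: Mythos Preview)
Your proposal is correct and follows essentially the same approach as the paper's own proof: both telescope $\Pi_n$ to $\Theta^{-1}(\Theta^{-1})^{(1)}\cdots(\Theta^{-1})^{(n)} B^{(n+1)}$, use the twisted form of the identity $\Theta^{-1}B^{(1)} = B - \tfrac{t^N}{t-\theta}W$ coming from Proposition~\ref{P:F-I} to obtain the recursion $\Pi_n = \Pi_{n-1} - t^N R_n W^{(n)}$, and then conclude by induction. Your introduction of the auxiliary notation $S_m$ makes the bookkeeping slightly more explicit, but the argument is the same.
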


\begin{proof}
First for $n\geqslant 0$, we observe that
\[
\Pi_n = \Theta^{-1} \bigl( \Theta^{-1} \bigr)^{(1)} \cdots \bigl( \Theta^{-1} \bigr)^{(n)} B^{(n+1)}.
\]
Moreover, Proposition~\ref{P:F-I} yields
\[
\Theta^{-1}B^{(1)} = B  - \frac{t^N}{t-\theta} W,
\]
which proves the case $n=0$. Proceeding by induction, assume the formula holds for $\Pi_{n-1}$.  Then combining the previous displayed equations,
\[
\Pi_n = \Theta^{-1} \bigl( \Theta^{-1} \bigr)^{(1)} \cdots \bigl( \Theta^{-1} \bigr)^{(n-1)} \biggl( B^{(n)} - \frac{t^N}{t-\theta^{q^n}} W^{(n)} \biggr)
= \Pi_{n-1} - t^N R_n W^{(n)},
\]
and the formula for $\Pi_n$ follows from the induction hypothesis.
\end{proof}

In the previous lemma, we wrote $\Pi_n$ as a finite sum involving the matrices $B$, $W$ and $R_m$, $0\leqslant m \leqslant n$. Next we investigate an explicit formula for the first column of $R_m$. For $n \geqslant 0$, recall the rational function $\cB_n \in \KK(t)$ from~\eqref{E:Bndef}.  We set $\cB_n \assign 0$ for $n<0$.

\begin{lemma} \label{L:R_m}
For $m \geqslant 0$, let $R_m$ be defined as in Lemma~\ref{L:Pi_n}. Then for $1 \leqslant i \leqslant r$,
\[
[R_m]_{i1} = \frac{\cB_{m-(i-1)}^{(i-1)}}{t-\theta^{q^{i-1}}}.
\]
\end{lemma}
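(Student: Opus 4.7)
\bigskip

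\noindent\textbf{Proof proposal.}
The plan is to establish a first-order recursion for the matrices $R_m$, identify the recursion induced on the first column, and then match it against a companion recursion for the rational functions $\cB_n$ coming directly from the combinatorics of shadowed partitions.

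First, I would observe that the defining formula for $R_m$ telescopes: for $m \geqslant 0$,
\[
R_{m+1} = \Theta^{-1}\bigl(\Theta^{-1}\bigr)^{(1)} \cdots \bigl(\Theta^{-1}\bigr)^{(m)}/(t-\theta^{q^{m+1}}) = \Theta^{-1} R_m^{(1)},
\]
since $(t-\theta^{q^m})^{(1)} = t-\theta^{q^{m+1}}$. Next, I would compute $\Theta^{-1}$ explicitly by solving $\Theta v = \be_j$ for each standard basis vector $\be_j$; the outcome is that $\Theta^{-1}$ has first row $(A_1, A_2, \dots, A_r)/(t-\theta)$ and an identity block shifted down by one (i.e.\ $[\Theta^{-1}]_{i,i-1} = 1$ for $2 \leqslant i \leqslant r$, other entries zero). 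Writing $\rho_m$ for the first column of $R_m$, the relation $R_{m+1} = \Theta^{-1} R_m^{(1)}$ then yields the explicit componentwise recursion
\[
[\rho_{m+1}]_i = [\rho_m]_{i-1}^{(1)} \quad (2 \leqslant i \leqslant r), \qquad
[\rho_{m+1}]_1 = \frac{1}{t-\theta} \sum_{j=1}^r A_j [\rho_m]_j^{(1)}.
\]

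I would then induct on $m$. The base case $m=0$ follows from $R_0 = I/(t-\theta)$ together with $\cB_0 = 1$ and the convention $\cB_n = 0$ for $n<0$. In the inductive step, the rows $i \geqslant 2$ fall out instantly from the first part of the recursion above, since
\[
[\rho_{m+1}]_i = \left(\frac{\cB_{m-(i-2)}^{(i-2)}}{t-\theta^{q^{i-2}}}\right)^{(1)} = \frac{\cB_{(m+1)-(i-1)}^{(i-1)}}{t-\theta^{q^{i-1}}}.
\]
The row $i=1$ case reduces to proving the combinatorial identity
\begin{equation}\label{E:Bnrecursion}
\cB_{n}(t) = \sum_{k=1}^{r} \frac{A_k}{t-\theta^{q^k}}\, \cB_{n-k}^{(k)}(t), \qquad n \geqslant 1.
\end{equation}

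The hard part will be verifying~\eqref{E:Bnrecursion}, and I would do so by analyzing the sets $P_r(n)$ from~\eqref{E:Bndef} directly. For $(S_1, \dots, S_r) \in P_r(n)$, I would argue that the element $0 \in \{0, 1, \dots, n-1\}$ lies in the piece $S_k + j$ with $j = 0$ for a unique $k$, i.e., $0 \in S_k$ for exactly one~$k$. Since $0 \in S_k$ forces the shadow $\{0, 1, \dots, k-1\}$ to be covered by $S_k + 0, \dots, S_k + (k-1)$, an examination of the partition property forces every other element of every $S_i$ to be at least~$k$. Shifting by $-k$ then furnishes a bijection
\[
P_r(n) \;\longleftrightarrow\; \bigsqcup_{k=1}^{r} P_r(n-k),
\]
where we set $P_r(m) = \emptyset$ for $m<0$. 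Tracking weights under this bijection, the factor contributed by $0 \in S_k$ is $A_k/(t-\theta^{q^k})$, and each remaining factor $A_i^{q^s}/(t-\theta^{q^{i+s}})$ with $s = s''+k$ is exactly the $k$-th Frobenius twist of $A_i^{q^{s''}}/(t-\theta^{q^{i+s''}})$. Summing over $k$ and factoring out yields~\eqref{E:Bnrecursion}, which closes the induction and completes the proof.
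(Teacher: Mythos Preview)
Your proof is correct and follows the same path as the paper's: establish $R_{m}=\Theta^{-1}R_{m-1}^{(1)}$, induct on $m$, handle rows $i\geqslant 2$ by the shift structure of $\Theta^{-1}$, and reduce row $i=1$ to the recursion $\cB_n = \sum_{k=1}^r \frac{A_k}{t-\theta^{q^k}}\,\cB_{n-k}^{(k)}$. The only difference is that the paper invokes \cite[Lem.~6.12(a)]{EP14} for this identity, whereas you supply a direct combinatorial proof via the bijection $P_r(n)\leftrightarrow\bigsqcup_{k} P_r(n-k)$ on shadowed partitions; your argument there is sound and has the mild advantage of making the proof self-contained.
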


\begin{proof}
We proceed by induction on $m$. As $\cB_0 = 1$, the conclusion holds trivially for $R_0$. For general $m$ we observe that $R_{m} = \Theta^{-1}R_{m-1}^{(1)}$, and so by combining the exact form of $\Theta^{-1}$ (see~\eqref{E:matrixidentity}), the induction hypothesis, and~\cite[Lem.~6.12(a)]{EP14},
\[
  [R_m]_{11} = \sum_{k=1}^r \bigl[ \Theta^{-1} \bigr]_{1k} \bigl[ R_{m-1}^{(1)} \bigr]_{k1}
  = \sum_{k=1}^r\frac{A_k}{t-\theta} \frac{\cB_{m-k}^{(k)}}{t-\theta^{q^k}}
  =  \frac{\cB_{m}}{t-\theta} .
\]
The cases $2 \leqslant i \leqslant r$ follow immediately from the induction hypothesis, as
\[
  [R_m]_{i1} = \sum_{k=1}^r \bigl[ \Theta^{-1} \bigr]_{ik} \bigl[ R_{m-1}^{(1)} \bigr]_{k1}
  = [R_{m-1}^{(1)}]_{i-1,1}
  = \frac{\cB_{m-(i-1)}^{(i-1)}}{t-\theta^{q^{i-1}}}.
  \qedhere
\]
\end{proof}

Combining Lemmata~\ref{L:Pi_n} and~\ref{L:R_m}, we determine the matrix $\Pi_n$ completely.

\begin{proposition}  \label{P:Pi_n}
For $n\geqslant 0$ and $1\leqslant i$, $j \leqslant r$,
\[
[\Pi_n]_{ij} = \biggl( h_j  - \frac{t^N}{t-\theta}\sum_{m=0}^{n-(i-1)} \cB_m \xi_j^{q^m} \biggr)^{(i-1)}.
\]
\end{proposition}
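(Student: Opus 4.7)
The plan is to combine Lemma~\ref{L:Pi_n} and Lemma~\ref{L:R_m} and to then identify the resulting expression as the prescribed Frobenius twist. The only mild subtlety is keeping track of which quantities the twist acts on, noting that $t$ is fixed under the twist while $\theta \mapsto \theta^{q^{i-1}}$.

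First, I would recall from~\eqref{E:W} that $W$ has only its first row nonzero, with $[W]_{1j} = \xi_j$. Applying the $m$-th Frobenius twist entrywise, $W^{(m)}$ also has only first row nonzero, with $[W^{(m)}]_{1j} = \xi_j^{q^m}$. Consequently the matrix product collapses to
\[
  [R_m W^{(m)}]_{ij} = [R_m]_{i1}\,[W^{(m)}]_{1j} = \frac{\cB_{m-(i-1)}^{(i-1)}}{t-\theta^{q^{i-1}}}\,\xi_j^{q^m},
\]
where the last equality uses Lemma~\ref{L:R_m}.

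Next, I would substitute into the formula $\Pi_n = B - t^N \sum_{m=0}^n R_m W^{(m)}$ from Lemma~\ref{L:Pi_n}, using $[B]_{ij} = h_j^{(i-1)}$, to obtain
\[
  [\Pi_n]_{ij} = h_j^{(i-1)} - \frac{t^N}{t-\theta^{q^{i-1}}} \sum_{m=0}^{n} \cB_{m-(i-1)}^{(i-1)}\,\xi_j^{q^m}.
\]
Since $\cB_k = 0$ for $k<0$ (by convention set just before Lemma~\ref{L:R_m}), the terms with $m<i-1$ vanish. Reindexing by $m \mapsto m+(i-1)$ then gives
\[
  [\Pi_n]_{ij} = h_j^{(i-1)} - \frac{t^N}{t-\theta^{q^{i-1}}} \sum_{m=0}^{n-(i-1)} \cB_{m}^{(i-1)}\,(\xi_j^{q^m})^{q^{i-1}}.
\]

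Finally, I would observe that the Frobenius twist fixes $t$ and $t^N$ and sends $t-\theta \mapsto t-\theta^{q^{i-1}}$, while $\cB_m^{(i-1)}$ and $(\xi_j^{q^m})^{q^{i-1}}$ are manifestly the $(i-1)$-fold twists of $\cB_m$ and $\xi_j^{q^m}$. Thus the right-hand side is exactly the $(i-1)$-fold twist of
\[
  h_j - \frac{t^N}{t-\theta} \sum_{m=0}^{n-(i-1)} \cB_m\,\xi_j^{q^m},
\]
which is the claimed formula. This step is routine, with no real obstacle; the proof is essentially bookkeeping that collects Lemmas~\ref{L:Pi_n} and~\ref{L:R_m} and exploits the $t$-invariance of the twist.
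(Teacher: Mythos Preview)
Your proof is correct and follows essentially the same approach as the paper: both compute $[R_m W^{(m)}]_{ij}$ via Lemma~\ref{L:R_m}, substitute into the formula from Lemma~\ref{L:Pi_n}, drop the vanishing terms using $\cB_k=0$ for $k<0$, reindex, and recognize the result as an $(i-1)$-fold Frobenius twist. Your write-up is slightly more explicit about how the twist acts on $t$, $\theta$, and $\xi_j^{q^m}$, but the argument is the same.
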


\begin{proof}
Using~\eqref{E:W} and Lemma~\ref{L:R_m}, we obtain that for each $m$,
\[
\bigl[ R_m W^{(m)} \bigr]_{ij}
= [R_m]_{i1} \xi_j^{q^m} =  \frac{\cB_{m-(i-1)}^{(i-1)}}{t-\theta^{q^{i-1}}}\cdot \xi_j^{q^m}.
\]
Substituting this into the formula for $\Pi_n$ in Lemma~\ref{L:Pi_n}, it follows that
\[
 [\Pi_n]_{ij}
 = h_j^{(i-1)} - t^N \sum_{m=0}^n \frac{\cB_{m-(i-1)}^{(i-1)}}{t-\theta^{q^{i-1}}} \cdot \xi_j^{q^m}
 = \biggl( h_j -  t^N \sum_{m=i-1}^{n} \frac{\cB_{m-(i-1)}}{t-\theta} \cdot \xi_j^{q^{m-(i-1)}} \biggr)^{(i-1)},
\]
and the result follows by reindexing the sum.
\end{proof}

The following theorem determines the entries of $\Pi$ in terms of $\cL_{\phi}(\xi_j;t)$ from~\eqref{E:Lxit}, starting from a strict basis $x_1, \dots, x_r$ of $\phi[t]$. As a consequence we find that $\Pi$ is the same as the matrix $\Upsilon$ constructed from Anderson generating functions in Proposition~\ref{P:RAT}.

\begin{theorem} \label{T:Pi}
Let $x_1, \dots, x_r$ be a strict basis of $\phi[t]$, and choose $N \geqslant 1$ and $\xi_1, \dots, \xi_r \in \phi[t^N]$ as in Proposition~\ref{P:good-xi}. Let $B = ( h_j^{(i-1)}) \in \GL_r(\TT)$ be defined as in Theorem~\ref{T:B}, and construct the rigid analytic trivialization for $\phi$,
\[
\Pi = B \prod_{n=0}^\infty \Bigl( B^{-1} \Theta^{-1} B^{(1)} \Bigr)^{(n)}
\]
as in Corollary~\ref{C:PiRAT}.  For $1 \leqslant i,\,j \leqslant r$,
\[
 [\Pi]_{ij} = \biggl( h_j - \frac{t^N}{t-\theta} \cL_{\phi}(\xi_j;t) \biggr)^{(i-1)}.
\]
Moreover, letting $\pi_j \assign \theta^N \log_{\phi}(\xi_j)$, the quantities $\pi_1, \dots, \pi_r$ form an $A$-basis of $\Lambda_{\phi}$ and $[\Pi]_{ij} = f_{\phi}(\pi_j;t)^{(i-1)}$. It follows that
\[
  \Pi = \Upsilon,
\]
where $\Upsilon$ is defined with respect to $\pi_1, \dots, \pi_r \in \Lambda_\phi$ in Proposition~\ref{P:RAT}.
\end{theorem}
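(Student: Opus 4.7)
The plan is to pass to the limit $n\to\infty$ in Proposition~\ref{P:Pi_n} to obtain a closed-form expression for $[\Pi]_{ij}$ in terms of $\cL_\phi(\xi_j;t)$, then translate that expression into the Anderson generating function $f_\phi(\pi_j;t)$ for $\pi_j \assign \theta^N \log_\phi(\xi_j)$, and finally conclude by comparing with Proposition~\ref{P:RAT}.

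For the first step, since $|\xi_j| < R_\phi$ by Proposition~\ref{P:good-xi}(1), the series $\cL_\phi(\xi_j;t) = \sum_{m\geqslant 0} \cB_m(t)\,\xi_j^{q^m}$ converges in $\TT$. Taking $n \to \infty$ in the identity
\[
[\Pi_n]_{ij} = \biggl( h_j - \frac{t^N}{t-\theta}\sum_{m=0}^{n-(i-1)} \cB_m\,\xi_j^{q^m}\biggr)^{(i-1)}
\]
of Proposition~\ref{P:Pi_n}, and using that $[\Pi_n]_{ij} \to [\Pi]_{ij}$ in $\TT$ by Corollary~\ref{C:PiRAT}, I would obtain
$[\Pi]_{ij} = \bigl( h_j - \tfrac{t^N}{t-\theta}\cL_\phi(\xi_j;t)\bigr)^{(i-1)}$. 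Implicit here is that the a priori pole at $t=\theta$ must cancel, which is automatic since $[\Pi]_{ij} \in \TT$.

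Next, I would verify that $\pi_j = \theta^N \log_\phi(\xi_j)$ lies in $\Lambda_\phi$: convergence of $\log_\phi(\xi_j)$ follows from $|\xi_j| < R_\phi \leqslant P_\phi$, and $\exp_\phi(\pi_j) = \phi_{t^N}(\exp_\phi(\log_\phi(\xi_j))) = \phi_{t^N}(\xi_j) = 0$. To see that $\pi_1,\dots,\pi_r$ form an $A$-basis of $\Lambda_\phi$, equivalently I would show that $\xi_1,\dots,\xi_r$ form a free basis of $\phi[t^N]$ as a module over the local ring $\bA/t^N\bA$ with maximal ideal $(t)$. Nakayama's lemma applies: the reductions $\phi_{t^{N-1}}(\xi_j) = x_j$ are an $\FF_q$-basis of $\phi[t] \cong \phi[t^N]/t\cdot\phi[t^N]$, so $\xi_1,\dots,\xi_r$ generate $\phi[t^N]$, and cardinality ($|\phi[t^N]| = q^{rN} = |(\bA/t^N\bA)^r|$) forces freeness.

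Finally, I would establish the identity $f_\phi(\pi_j;t) = h_j - \tfrac{t^N}{t-\theta}\cL_\phi(\xi_j;t)$. Splitting the defining series~\eqref{E:AGFdef} at $m = N$, the relation $\exp_\phi(\pi_j/\theta^{m+1}) = \phi_{t^{N-1-m}}(\xi_j)$ identifies the first $N$ terms with $h_j$, while a reindexing identifies the tail with $t^N f_\phi(\log_\phi(\xi_j);t)$, which equals $-t^N \cL_\phi(\xi_j;t)/(t-\theta)$ by~\eqref{E:LxitAGF}. Combining with the first step gives $[\Pi]_{ij} = f_\phi(\pi_j;t)^{(i-1)}$, and Proposition~\ref{P:RAT} yields $\Pi = \Upsilon$. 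The main technical obstacle is the Nakayama step establishing the $A$-basis property for $\pi_1,\dots,\pi_r$; the convergence bookkeeping and the series manipulation linking $f_\phi(\pi_j;t)$, $h_j$, and $\cL_\phi(\xi_j;t)$ are then routine.
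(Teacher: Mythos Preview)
Your limit step from Proposition~\ref{P:Pi_n}, the verification that $\pi_j \in \Lambda_\phi$, and the series identification $f_\phi(\pi_j;t) = h_j - t^N(t-\theta)^{-1}\cL_\phi(\xi_j;t)$ all match the paper's proof essentially verbatim.

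The gap is exactly where you flag it as the ``main technical obstacle,'' and your proposed Nakayama argument does not close it. Your claim that the $A$-basis property for $\pi_1,\dots,\pi_r$ is \emph{equivalent} to $\xi_1,\dots,\xi_r$ being a $(\bA/t^N\bA)$-basis of $\phi[t^N]$ is false. Via the isomorphism $\Lambda_\phi/\theta^N\Lambda_\phi \cong \phi[t^N]$, your Nakayama step (which is itself correct) shows only that the $\pi_j$ span $\Lambda_\phi/\theta^N\Lambda_\phi$. Writing $(\pi_1,\dots,\pi_r) = (\omega_1,\dots,\omega_r)E$ against a genuine $A$-basis with $E \in \Mat_r(A)$, this yields only $\theta \nmid \det E$, not $\det E \in \FF_q^\times$. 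The ring $A = \FF_q[\theta]$ is not local, so there is no Nakayama lift available; for a toy obstruction, in a rank-$1$ lattice $A\omega$ the element $(1+\theta)\omega$ reduces to a generator of $A\omega/\theta^N A\omega$ for every $N$ but does not generate $A\omega$.

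The paper closes this gap by turning the already-established invertibility of $\Pi$ (Corollary~\ref{C:PiRAT}) into the missing information. Take any $A$-basis $\omega_1,\dots,\omega_r$ of $\Lambda_\phi$ with Pellarin matrix $U \in \GL_r(\TT)$ from Proposition~\ref{P:RAT}, write $(\omega_1,\dots,\omega_r)E = (\pi_1,\dots,\pi_r)$ with $E \in \Mat_r(A)$, and let $\tE \in \Mat_r(\bA)$ be its $\theta \mapsto t$ transfer. Then~\eqref{E:AGFscalar} gives $U\tE = \Pi$, so $\det \tE \in \TT^\times \cap \bA = \FF_q^\times$, hence $\det E \in \FF_q^\times$. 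Thus the basis property is deduced from the rigid analytic trivialization itself, not from torsion combinatorics.
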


\begin{proof}
Recall in Proposition~\ref{P:good-xi} that $\xi_1, \dots, \xi_r$ are chosen so that $|\xi_j|< R_{\phi}$ for each~$j$. Thus as in~\eqref{E:Lxit}, we have $\cL_{\phi}(\xi_j;t) \in \TT$ for each $j$ by~\cite[Prop.~6.10]{EP14}.  For $1\leqslant i,\,j \leqslant r$ fixed, it then follows from~\eqref{E:Lxit} and Proposition~\ref{P:Pi_n} that
\[
  [\Pi]_{ij} = \lim_{n\to \infty} \biggl( h_j - \frac{t^N}{t-\theta} \sum_{m=0}^{n-(i-1)} \cB_m \xi_j^{q^m} \biggr)^{(i-1)} = \biggl( h_j - \frac{t^N}{t-\theta} \cL_{\phi} (\xi_j; t) \biggr)^{(i-1)},
\]
thus verifying the first identity.  As $R_{\phi}$ is at most the radius of convergence of $\log_{\phi}(z)$ (see~\cite[Thm.~6.13(b)]{EP14}, but also Corollary~\ref{C:Rphi} below), $\log_{\phi}(\xi_j)$ is well-defined for each~$j$, and so we can define $\pi_j \assign \theta^N \log_{\phi}(\xi_j)$ ($\Rightarrow \xi_j = \exp_\phi(\pi_j/\theta^N)$). By Proposition~\ref{P:good-xi},
\[
 \exp_\phi(\pi_j) = \exp_\phi \bigl( \theta^N \log_\phi(\xi_j) \bigr) = \phi_{t^N}(\xi_j)=0,
\]
so $\pi_1,\dots,\pi_r \in \Lambda_\phi$. As in~\eqref{E:hcong}, since $\phi_{t^{N-1}}(\xi_j)= x_j = \exp_\phi(\pi_j/\theta)$, we have $h_j = \sum_{m=0}^{N-1} \exp_\phi (\pi_j/\theta^{m+1}) t^m$.
By \eqref{E:LxitAGF}, we find $\cL_{\phi}(\xi_j;t) = -(t-\theta) f_{\phi}(\pi_j/\theta^N;t)$, and using the definition of Anderson generating function in~\eqref{E:AGFdef}, we obtain (cf.\ \cite[Eq.~(7.2)]{EP14})
\[
  h_j - \frac{t^N}{t-\theta} \cL_{\phi}(\xi_j;t) = f_{\phi}(\pi_j;t).
\]

Thus for each $i$, $j$, we have $[\Pi]_{ij} = f_{\phi}(\pi_j;t)^{(i-1)}$, and so $\Pi$ has the form of $\Upsilon$ in Proposition~\ref{P:RAT}, but in order for that proposition to apply it remains to verify that $\pi_1, \dots, \pi_r$ is an $A$-basis of $\Lambda_{\phi}$. Let $\omega_1, \dots, \omega_r \in \Lambda_\phi$ be any $A$-basis of $\Lambda_\phi$, and let $U = ( f_{\phi}(\omega_j;t)^{(i-1)})$ be the corresponding matrix from~\eqref{E:Upsilon}. Suppose $E \in \Mat_r(A)$ satisfies
\[
 (\omega_1, \dots, \omega_r) E = (\pi_1, \dots, \pi_r),
\]
and let $\tE \in \Mat_r(\bA)$ be the corresponding matrix where $\theta \mapsto t$. From~\eqref{E:AGFscalar}, it follows that $U\tE=\Pi$, whence $\det U\cdot \det \tE = \det \Pi$.  Since $\Pi$, $U \in \GL_r(\TT)$ (by Proposition~\ref{P:RAT} and Corollary~\ref{C:PiRAT}), it must be that $\det \tE \in \TT^{\times}$. However, $\TT^{\times} \cap \bA = \FF_q^{\times}$, and so $\det E = \det \tE \in \FF_q^{\times}$. Therefore, $\pi_1, \dots, \pi_r$ form an $A$-basis of~$\Lambda_\phi$.
\end{proof}

\begin{corollary} \label{C:Rphi}
The radius of convergence of $\log_{\phi}(z)$ is exactly~$R_{\phi}$.
\end{corollary}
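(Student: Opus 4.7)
The plan is to establish $P_\phi \leqslant R_\phi$, as the reverse inequality $R_\phi \leqslant P_\phi$ is already recorded in Remark~\ref{R:Rphi}. My approach is to exhibit an explicit nonzero period $\pi \in \Lambda_\phi$ with $|\pi| = R_\phi$, using the $t^N$-torsion constructions of this section.

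I would start from a strict $\FF_q$-basis $x_1, \dots, x_r$ of $\phi[t]$ (Lemma~\ref{L:Strict}), so $\deg x_1 = \lambda_1$, and apply Proposition~\ref{P:good-xi} to obtain $N \geqslant 1$ and $\xi_1 \in \phi[t^N]$ with $\phi_{t^{N-1}}(\xi_1) = x_1$. Next I would compute $\deg \xi_1$ by tracking the recursion in Proposition~\ref{P:algo} starting from $y_1 = x_1$. Using Lemma~\ref{L:poly}(1) together with the identity $a_1 = \mu_m = -1 - \lambda_1$ already observed at the end of the proof of Theorem~\ref{T:B}, we see $-\deg x_1 = -\lambda_1 > a_1$, so $y_1$ lies in the interval $I_1$ from the outset. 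Consequently every iteration is in case~(i) of that proposition and $\deg y_{k+1} = \deg y_k - 1$, giving $\deg \xi_1 = \lambda_1 - (N-1)$, i.e., $|\xi_1| = |\theta|^{\lambda_1 - N + 1}$.

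The remaining ingredient is the standard isometry property of $\exp_\phi$ on its injectivity disk. Using the Weierstrass product $\exp_\phi(u) = u \prod_{0 \neq \pi \in \Lambda_\phi}(1 - u/\pi)$ and the fact that $|1 - u/\pi| = 1$ whenever $|u/\pi| < 1$, I would conclude that $|\exp_\phi(u)| = |u|$ for $|u| < P_\phi$, so that $\exp_\phi$ restricts to an isometric bijection of the open disk $\{|u| < P_\phi\}$ onto itself. Since $|\xi_1| < R_\phi \leqslant P_\phi$ by Proposition~\ref{P:good-xi} and Remark~\ref{R:Rphi}, this produces a unique $u \in \KK$ with $|u| < P_\phi$ and $\exp_\phi(u) = \xi_1$, with $|u| = |\xi_1|$.

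Finally I would set $\pi \assign \theta^N u$. Then $\exp_\phi(\pi) = \phi_{t^N}(\xi_1) = 0$, so $\pi \in \Lambda_\phi$, and $\pi \neq 0$ since $\xi_1 \neq 0$ forces $u \neq 0$. Computing
\[
  |\pi| = |\theta|^N \cdot |u| = |\theta|^N \cdot |\theta|^{\lambda_1 - N + 1} = |\theta|^{\lambda_1 + 1} = |\theta|^{-\mu_m} = R_\phi,
\]
we obtain $P_\phi \leqslant |\pi| = R_\phi$, completing the proof. The entire argument amounts to combining the degree bookkeeping provided by Proposition~\ref{P:algo} with the standard non-Archimedean fact that $\exp_\phi$ is an isometry on the open disk of radius~$P_\phi$; I do not foresee a serious technical obstacle. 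Note that one could alternatively work with $\pi_1 = \theta^N \log_\phi(\xi_1)$ from Theorem~\ref{T:Pi}, but invoking the inverse bijection for $\exp_\phi$ directly avoids having to independently bound $|\log_\phi(\xi_1)|$.
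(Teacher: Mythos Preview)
Your proposal is correct and follows essentially the same approach as the paper: exhibit a nonzero period of norm exactly $R_\phi$ using the first element $x_1$ of a strict basis together with the identity $\lambda_1 = -1 - \mu_m$ and the isometry of $\exp_\phi$ (equivalently $\log_\phi$) on the open disk of radius $P_\phi$. The only cosmetic difference is that the paper, observing that $|x_1| < R_\phi$ already, works directly with $\pi_1 = \theta\log_\phi(x_1)$ and cites \cite[Prop.~4.14.2]{Goss} for the isometry, whereas you track $\xi_1$ through $N$ steps to get $\deg\xi_1 = \lambda_1 - (N-1)$ before forming $\pi = \theta^N u$; both routes yield the same period with $|\pi| = R_\phi$.
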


\begin{proof}
As in \eqref{E:lograd}, the radius of convergence $P_{\phi}$ of $\log_{\phi}(z)$ is the minimum norm among nonzero periods in $\Lambda_\phi$ by \cite[Prop.~4.14.2]{Goss}.  Furthermore, $R_\phi \leqslant P_\phi$, as explained in Remark~\ref{R:Rphi}. From our strict basis for $\phi[t]$, it follows that $\deg x_1 = \lambda_1$, and Lemma~\ref{L:poly} implies $\lambda_1 = -1 - \mu_m$. Therefore, $|x_1| = R_{\phi}/|\theta| < R_{\phi}$, and it follows from the proof of Theorem~\ref{T:Pi} that $\pi_1 = \theta\log(x_1)$. Also by \cite[Prop.~4.14.2]{Goss}, $\log_{\phi}(z)$ is an isometry on the open disk of radius $P_{\phi}$ centered at~$0$, and so $|\pi_1| = |\theta|\cdot |x_1| = R_{\phi}$. Thus, $R_{\phi} \geqslant P_{\phi}$.
\end{proof}

These results also shed light on the field generated by the period lattice. Let $L/k_\infty$ be a finite extension, containing the coefficients $A_1, \dots, A_r$. When $r=2$, Maurischat~\cite[Thm.~3.1]{Maurischat19a}, proved that $L(\Lambda_{\phi}) = L(\phi[t^N])$, using Newton polygons in a similar fashion to Propositions~\ref{P:algo} and~\ref{P:good-xi} (see Theorem~\ref{T:Rank2}). For higher ranks, Gekeler~\cite[\S 2]{Gekeler19} used the spectrum of successive minimum bases for $\Lambda_{\phi}$ to extend Maurischat's result. Although not providing the degree of detail as Gekeler's result, we obtain the following for all ranks, whose proof has been designed in a similar manner to Maurischat's rank~$2$ result.

\begin{corollary} \label{C:fieldexts}
Suppose that $\phi$ is defined over a finite extension $L/k_{\infty}$. Choose $N \geqslant 1$ as in Remark~\ref{R:N} and Proposition~\ref{P:good-xi}.  Then $L(\Lambda_{\phi}) = L(\phi[t^N])$.
\end{corollary}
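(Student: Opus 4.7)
The plan is to prove the two inclusions $L(\phi[t^N]) \subseteq L(\Lambda_\phi)$ and $L(\Lambda_\phi) \subseteq L(\phi[t^N])$ separately, with the second (harder) inclusion being a direct consequence of Theorem~\ref{T:Pi} combined with the convergence of $\log_\phi$.

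For the inclusion $L(\phi[t^N]) \subseteq L(\Lambda_\phi)$, I would argue as follows. The recursion determining the coefficients $\alpha_n$ of $\exp_\phi$ from the functional equation $\exp_\phi(\theta z) = \phi_t(\exp_\phi(z))$ shows that these coefficients lie in $L$, since $A_1, \dots, A_r \in L$ by hypothesis. Every element of $\phi[t^N]$ has the form $\exp_\phi(\pi/\theta^N)$ for some $\pi \in \Lambda_\phi$, and the sum defining $\exp_\phi(\pi/\theta^N)$ converges in $L(\pi)$. Hence $\phi[t^N] \subseteq L(\Lambda_\phi)$, giving the first containment.

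For the reverse inclusion $L(\Lambda_\phi) \subseteq L(\phi[t^N])$, I would pick a strict basis $x_1, \dots, x_r$ of $\phi[t]$ (which exists by Lemma~\ref{L:Strict}) and apply Proposition~\ref{P:good-xi} with this choice of $N$ to produce $\xi_1, \dots, \xi_r \in \phi[t^N] \subseteq L(\phi[t^N])$ satisfying $|\xi_j| < R_\phi$ for each $j$. Theorem~\ref{T:Pi} then tells us that $\pi_j \assign \theta^N \log_\phi(\xi_j)$ forms an $A$-basis of $\Lambda_\phi$. Since the coefficients of $\log_\phi(z)$ lie in $L$ (by the same recursive argument as for $\exp_\phi$) and since $|\xi_j| < R_\phi = P_\phi$ by Corollary~\ref{C:Rphi}, the series $\log_\phi(\xi_j)$ converges in the complete field $L(\xi_j) \subseteq L(\phi[t^N])$. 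Thus $\pi_j \in L(\phi[t^N])$ for all $j$, and since $A \subseteq L$ and $\pi_1, \dots, \pi_r$ generate $\Lambda_\phi$ as an $A$-module, we conclude $\Lambda_\phi \subseteq L(\phi[t^N])$.

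The main potential obstacle is verifying that the construction of $\xi_j$ from the algorithm in Propositions~\ref{P:algo} and~\ref{P:good-xi} actually keeps us inside the right field. This is not really a difficulty, however: at each step the recursion $\phi_t(y_{k+1}) = y_k$ only requires choosing a root of a polynomial whose coefficients lie in $L(y_k)$, and all such roots differ by $t$-torsion, so they are automatically elements of $\phi[t^{k+1}] \subseteq L(\phi[t^N])$ whenever $k+1 \leqslant N$. The key substantive input is therefore just the identity $\pi_j = \theta^N \log_\phi(\xi_j)$ from Theorem~\ref{T:Pi} together with the equality $R_\phi = P_\phi$ from Corollary~\ref{C:Rphi}, which ensures the logarithm series converges at $\xi_j$.
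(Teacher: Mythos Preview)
Your proposal is correct and follows essentially the same route as the paper: the key inclusion $L(\Lambda_\phi)\subseteq L(\phi[t^N])$ comes from Theorem~\ref{T:Pi} via $\pi_j=\theta^N\log_\phi(\xi_j)$ together with completeness of $L(\phi[t^N])$, exactly as in the paper. The only difference is that for the easy inclusion $L(\phi[t^N])\subseteq L(\Lambda_\phi)$ the paper simply cites \cite[Prop.~2.1]{Maurischat19a}, whereas you supply a direct argument using $\exp_\phi$; note that your claim ``the sum converges in $L(\pi)$'' tacitly assumes $L(\pi)$ is complete, which is justified only once you know $\pi\in L(\phi[t^N])$ from the other inclusion, so strictly speaking you should reverse the order of the two paragraphs.
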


\begin{proof}
As finite extensions of $k_\infty$, $L$ and $L(\phi[t^N])$ are complete. By~\cite[Prop.~2.1]{Maurischat19a}, we have that $L(\phi[t^N]) \subseteq L(\Lambda_{\phi})$. By definition of the $A$-basis $\pi_1, \dots, \pi_r$ of $\Lambda_{\phi}$ from Theorem~\ref{T:Pi}, $\pi_j = \theta^N \log_\phi(\xi_j)$. Since $\log_\phi(z)$ has coefficients in $L$, we have $\log_\phi(\xi_j) \in L(\phi[t^N])$ for each~$j$.
\end{proof}

\begin{remark}
It is an interesting question, brought up by one of the referees, about how much the preceding constructions and results depend on whether the initial $\FF_q$-basis $x_1, \dots, x_r \in \phi[t]$ is strict or not. At first the authors suspected that one could use the results for strict bases to then deduce the same results for non-strict bases, but it appears to be fairly subtle, and certain conclusions may turn out to be false if the initial basis is not strict.

For example, we can look ahead to Example~\ref{Ex:A}, where $q=3$, $\phi$ has rank~$2$, and $N=2$, and use the torsion elements $x_1$, $x_2 \in \phi[t]$ and $\xi_1$, $\xi_2 \in \phi[t^2]$ from~\eqref{E:Exx1x2} and \eqref{E:Exxi1xi2}. If we take
\begin{gather*}
x_1' \assign x_1 + x_2, \quad x_2' \assign x_2 \quad \in \phi[t], \\
\xi_1' \assign \xi_1 + \xi_2 + x_1, \quad \xi_2' \assign \xi_2 \quad \in \phi[t^2],
\end{gather*}
then $x_1'$, $x_2'$ do not form a strict basis for $\phi[t]$, but the sequences $x_1'$, $\xi_1', \ldots$ and $x_2'$, $\xi_2', \ldots$ can be extended to $t$-division sequences as in Proposition~\ref{P:algo} with terms of strictly decreasing degree. However, when we consider the corresponding polynomials, $h_1' = x_1' + \xi_1't$, $h_2' = x_2' + \xi_2't$, the matrix $B' = \left(\begin{smallmatrix} h_1' & h_2' \\ {h_1'}^{(1)} & {h_2'}^{(2)} \end{smallmatrix} \right)$ has determinant in $\laurent{\FF_3}{\theta^{-1/2}}[t]$,
\[
\det B' = -\theta^{1/2} - (\theta^{1/2} + \theta^{-1/2} + \theta^{-5} + \cdots) \cdot t + (\theta^{-11/2} + \theta^{-15} + \cdots)\cdot t^2.
\]
The details of this computation follow quickly from~\eqref{E:Exx1x2} and~\eqref{E:Exxi1xi2}. What we notice is that the coefficient of $t$ in $\det B'$ has the same $\infty$-adic norm as the constant term, implying the conclusion from Proposition~\ref{P:detB}(1) does not hold if we start with the non-strict basis $x_1'$, $x_2'$.
\end{remark}

\section{Rank \texorpdfstring{$2$}{2} Drinfeld modules and examples} \label{S:Rank2}
We now specialize to the case that $\phi$ has rank $2$, defined by
\begin{equation} \label{E:DMrank2}
  \phi_t = \theta + A_1 \tau + A_2 \tau^2, \quad A_2 \neq 0.
\end{equation}
We seek to express the data from the previous sections explicitly, and we work out concrete examples. Though framed slightly differently, Theorem~\ref{T:Rank2} below was proved by Maurischat~\cite[Thm.~3.1]{Maurischat19a} using similar methods to Propositions~\ref{P:algo} and~\ref{P:good-xi}. It is interesting to extract it from these propositions, which we briefly summarize afterwards. The Newton polygon for $\phi_t(x)$ falls into two cases, depicted in Figure~\ref{F:Rank2}, where (1) it has a single bottom edge of slope $\lambda_1$, or (2) it has two bottom edges of slopes $\lambda_1< \lambda_2$. We note that $\phi_t(x)$ belongs to case~(1) if and only if $\deg A_1 \leqslant (q+\deg A_2)/(q+1)$.

\begin{figure}[tb]
\begin{tikzpicture}
     \draw[->] (-0.2,0) -- (4.5,0);
     \draw[->] (0,-3.5) -- (0,1.1);

    \draw[domain=1:4] plot (\x,{(-1/3)- (2/3)*\x}) ;

    \node at (2.5,-1.5) {$\lambda_1$};

    \node at (0.2,0.3) {$0$};
    \node at (1,0.3) {$1$};
    \node at (2,0.3) {$q$};
    \node at (4,0.3) {$q^2$};

    \node at (-0.5,-1) {$-1$};
    \node at (-0.9,-0.5) {$v_\infty(A_1)$};
    \node at (-0.9,-3) {$v_\infty(A_2)$};

    \node at (2,-3.5) {Case 1};

    \draw[dashed] (0,-1) -- (1,-1);
    \draw[dashed] (0,-0.5) -- (2,-0.5);
    \draw[dashed] (0,-3) -- (4,-3);

    \draw[dashed] (1,0) -- (1,-1);
    \draw[dashed] (2,0) -- (2,-0.5);
    \draw[dashed] (4,0) -- (4,-3);

   \foreach \coord in {
  {(1,-1)},
  {(2,-0.5)},
  {(4,-3)}
   } {
  \fill \coord circle (2pt) ;
   }
\end{tikzpicture}
\begin{tikzpicture}
    \draw[->] (-0.2,0) -- (4.5,0);
    \draw[->] (0,-3.5) -- (0,1.1);

    \draw[domain=1:2] plot (\x,{- \x}) ;
    \draw[domain=2:4] plot (\x,{-1- 0.5 * \x}) ;

    \node at (1.5,-1) {$\lambda_1$};
    \node at (2.8,-2) {$\lambda_2$};

    \node at (0.2,0.3) {$0$};
    \node at (1,0.3) {$1$};
    \node at (2,0.3) {$q$};
    \node at (4,0.3) {$q^2$};

    \node at (-0.5,-1) {$-1$};
    \node at (-0.9,-2) {$v_\infty(A_1)$};
    \node at (-0.9,-3) {$v_\infty(A_2)$};

    \node at (2,-3.5) {Case 2};

    \draw[dashed] (0,-1) -- (1,-1);
    \draw[dashed] (0,-2) -- (2,-2);
    \draw[dashed] (0,-3) -- (4,-3);

    \draw[dashed] (1,0) -- (1,-1);
    \draw[dashed] (2,0) -- (2,-2);
    \draw[dashed] (4,0) -- (4,-3);

   \foreach \coord in {
  {(1,-1)},
  {(2,-2)},
  {(4,-3)}
   } {
  \fill \coord circle (2pt) ;
   }
\end{tikzpicture}
\caption{Rank $2$ Newton polygon cases for $\phi_t(x)$.}
\label{F:Rank2}
\end{figure}
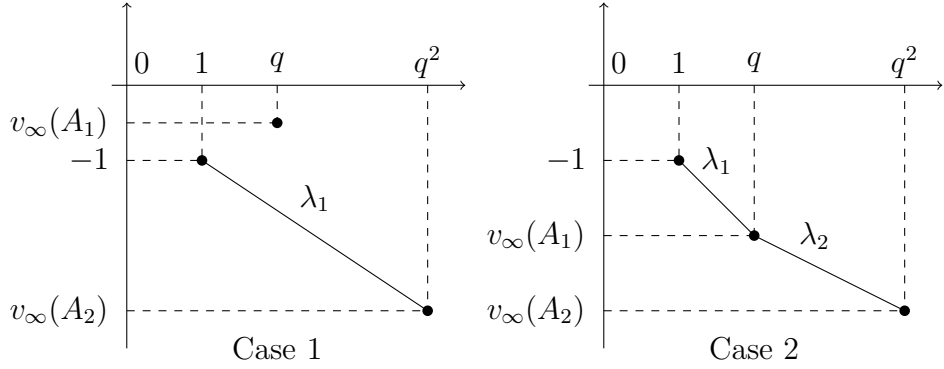

\begin{theorem}[{Maurischat~\cite[Thm.~3.1]{Maurischat19a}}] \label{T:Rank2}
Let $\phi$ be a rank $2$ Drinfeld module defined as in \eqref{E:DMrank2}, and consider the following cases.
\begin{enumerate}
  \item[(1)] $\deg A_1 \leqslant (q+\deg A_2)/(q+1)$,
  \item[(2)] $\deg A_1 > (q+\deg A_2)/(q+1)$.
\end{enumerate}
In Case~\textup{2}, let $\ell \geqslant 1$ be the unique integer such that
\[
\frac{q^{\ell} + \deg A_2}{q+1} \leqslant \deg A_1 < \frac{q^{\ell+1} + \deg A_2}{q+1}.
\]
Then the positive integer~$N$ and the degrees of $\xi_1$, $\xi_2 \in \phi[t^N]$ in Proposition~\ref{P:good-xi} can be chosen to satisfy the values in Figure~\ref{F:Cases}.
\begin{figure}[htb]
\renewcommand{\arraystretch}{1.25}
\begin{tabular}{ |c|c|c|c| }
 \hline
 \textup{Case} & $N$ & $\deg \xi_1$ & $\deg \xi_2$ \\
 \hline & & & \\[-10pt]
 \textup{1} & $1$ & $\dfrac{1-\deg A_2}{q^2-1}$ & $\dfrac{1-\deg A_2}{q^2-1}$ \\[10pt]
 \hline & & & \\[-10pt]
 \textup{2} & $\ell$ & $\dfrac{1-\deg A_1}{q-1} - (\ell-1)$ & $\dfrac{(-q^\ell+q+1)\deg A_1 - \deg A_2}{q^\ell(q-1)}$ \\[10pt]
 \hline
\end{tabular}
\caption{Degrees of $\xi_1$, $\xi_2$ in rank~$2$.}
\label{F:Cases}
\end{figure}
\end{theorem}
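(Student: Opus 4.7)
The plan is to apply Propositions~\ref{P:algo} and~\ref{P:good-xi} starting from a strict $\FF_q$-basis $x_1$, $x_2$ of $\phi[t]$, whose existence is guaranteed by Lemma~\ref{L:Strict}, and then to trace the recursion explicitly using the closed-form iterate formula~\eqref{E:uncirck} together with the refined value of $N$ obtained in~\eqref{E:Ndef}. The two cases of the theorem correspond to whether the vertex $(q, -\deg A_1)$ lies on or above the segment joining $(1,-1)$ to $(q^2, -\deg A_2)$, i.e., whether the Newton polygon of $\phi_t(x)$ has one edge ($s=1$) or two edges ($s=2$).

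In Case~1 the Newton polygon has a single edge of slope $\lambda_1 = (1 - \deg A_2)/(q^2 - 1)$, both basis elements satisfy $\deg x_j = \lambda_1$, and $a_1 = -1 - \lambda_1$. Hence $|x_j| = |\theta|^{\lambda_1} < |\theta|^{-a_1} = R_\phi$ automatically, so $N = 1$ works with $\xi_j = x_j$, yielding the first row of Figure~\ref{F:Cases}. In Case~2 the two slopes are $\lambda_1 < \lambda_2$, so $\deg x_1 = \lambda_1 = (1 - \deg A_1)/(q-1)$ and $\deg x_2 = \lambda_2 = (\deg A_1 - \deg A_2)/(q(q-1))$. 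A direct comparison of $\mu_1$ and $\mu_2$ shows that the Case~2 hypothesis is equivalent to $\mu_1 > \mu_2$, so $m = 1$ and $a_1 = \mu_1 = (\deg A_1 - q)/(q-1)$.

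For $x_1$ one verifies $-\lambda_1 = \mu_1 + 1 > a_1$, hence $-\deg x_1 \in I_1$ and each step of the recursion in Proposition~\ref{P:algo} uses case~(i), with map $u_0(z) = z - 1$. For $x_2$ the relevant map is $u_1(z) = (z - \deg A_1)/q$ from case~(ii) with $j=1$, applicable as long as the current iterate $u_1^{\circ k}(\lambda_2)$ lies in $[-a_1, -a_2)$. Using~\eqref{E:uncirck}, the inequality $u_1^{\circ k}(\lambda_2) < -a_1$ rearranges, after multiplying through by $q^{k+1}(q-1)$ and simplifying $\lambda_2(q-1) = (\deg A_1 - \deg A_2)/q$, to
\[
   (q+1) \deg A_1 < q^{k+2} + \deg A_2,
\]
whose smallest solution, by the defining inequalities of $\ell$, is $k = \ell - 1$.

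Following~\eqref{E:Ndef}, we therefore take $N_1 = 1$ and $N_2 = \ell$, so $N = \max\{N_1, N_2\} = \ell$; then $\xi_1$ is obtained by $\ell - 1$ applications of $u_0$ to $x_1$, and $\xi_2$ by $\ell - 1$ applications of $u_1$ to $x_2$. Direct substitution yields $\deg \xi_1 = \lambda_1 - (\ell - 1) = (1 - \deg A_1)/(q-1) - (\ell - 1)$, and evaluating~\eqref{E:uncirck} at $n=1$, $k = \ell - 1$, $z = \lambda_2$ and simplifying gives $\deg \xi_2 = ((-q^\ell + q + 1)\deg A_1 - \deg A_2)/(q^\ell(q-1))$, matching the second row of Figure~\ref{F:Cases}. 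The main bookkeeping obstacle is isolating $\ell$ from the iterated inequality above, together with the routine but necessary check that each intermediate iterate $u_1^{\circ j}(\lambda_2)$ for $0 \leqslant j \leqslant \ell - 2$ remains in $I_2$ (which follows from monotonicity of the iterates and the bound $\lambda_s < -a_s$ established in the proof of Proposition~\ref{P:algo}) so that the recursion continues to use the same case~(ii) map throughout.
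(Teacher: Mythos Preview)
Your proof is correct and follows essentially the same approach as the paper's: start from a strict basis of $\phi[t]$, invoke Propositions~\ref{P:algo} and~\ref{P:good-xi} and Remark~\ref{R:N}, identify the relevant iteration map as $u_1$ for $x_2$ (and $u_0$ for $x_1$) in Case~2, and use~\eqref{E:uncirck} to extract $\kappa = \ell - 1$ from the inequality $(q+1)\deg A_1 < q^{\kappa+2} + \deg A_2$. You supply somewhat more detail than the paper's sketch (the equivalence of Case~2 with $\mu_1 > \mu_2$, and the monotonicity check ensuring the intermediate iterates remain in $I_2$), but the argument is the same.
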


We briefly recount how Remark~\ref{R:N} and Proposition~\ref{P:good-xi} lead to these results in the case of rank~$2$. Let $x_1$, $x_2 \in \phi[t]$ be a strict basis.
(i) In Case~1, $\deg(x_1)=\deg(x_2)=\lambda_1$. As in the proof of Corollary~\ref{C:Rphi}, $|x_j| < R_\phi$ for $j=1$, $2$, and so $N=1$. (ii) In Case~2, $\deg(x_1) = \lambda_1$, $\deg(x_2) = \lambda_2$, and $\lambda_1 < \lambda_2$. Again as in Corollary~\ref{C:Rphi}, $|x_1|< R_{\phi}$, and so $N_1=1$. Now $-\lambda_2 \in (a_2,\infty]$, and we choose $\kappa \geqslant 0$ minimal so that $u_1^{\circ \kappa}(\lambda_2) < -a_1$. Using \eqref{E:uncirck} together with the definitions of~$a_1$ and~$\lambda_2$, a short calculation yields that $\kappa$ is minimal so that
\[
\deg A_1 < \frac{q^{\kappa+2} + \deg A_2}{q+1},
\]
from which $\kappa=\ell-1$ (cf.\ \cite[Pf.~of Thm.~3.1]{Maurischat19a}). Applying Remark~\ref{R:N}, we then have $N_2 = \kappa+1 = \ell$, and the degrees of $\xi_1$, $\xi_2$ follow.

As in Theorem~\ref{T:Pi}, the periods $\pi_1$, $\pi_2 \in \Lambda_\phi$ are defined by $\pi_j = \theta^N \log_{\phi}(\xi_j)$, for $j=1$,~$2$. As $\xi_1$, $\xi_2$ are strictly within the radius of convergence of $\log_\phi(z)$,
\begin{equation} \label{E:pideg}
    \deg \pi_j = N + \deg \xi_j, \quad j=1,\, 2,
\end{equation}
by \cite[Prop.~4.14.2]{Goss}. We now consider explicit examples to demonstrate how one can perform the constructions of $B$ and $\Pi$ in \S\ref{S:MatEst}--\S\ref{S:prod}. Although both examples below are Drinfeld modules with complex multiplication, the approximations that follow could be performed for any Drinfeld module, and it is not especially important that they have complex multiplication. On the other hand we found it enlightening to compare the calculations of various quantities with those previously obtained by other methods.

\begin{example} \label{Ex:Car}
We let $\phi : \bA \to \KK[\tau]$ be defined by
\[
 \phi_t = \theta + \bigl(\theta^{q/2} + \theta^{1/2} \bigr)\tau + \tau^2,
\]
which in essence is the Carlitz module for the ring $\FF_q[t^{1/2}] \supseteq \bA$. In this case $\deg A_1 = q/2$ and $\deg A_2 = 0$. From Theorem~\ref{T:Rank2}, we are in Case 2 and $N=\ell=1$. Let $\bsi$ be a choice of $(-1)^{1/(q-1)}$, and let $\FF = \FF_q(\bsi)$. Then in the field $\laurent{\FF}{\theta^{-1/(2q-2)}}$, we set
\[
  \zeta \assign \bsi \theta^{1/(2q-2)}, \quad
  \beta \assign \theta^{-1/2} + \theta^{-q/2} + \theta^{-q^2/2} + \theta^{-q^3/2} + \cdots.
\]
We verify that $x_1 \assign \zeta\beta$, $x_2 \assign \zeta$, form a strict basis of $\phi[t]$ with $\deg(x_1) = -(q-2)/(2q-2)$ and $\deg(x_2) = 1/(2q-2)$. The matrix $B$ from Theorem~\ref{T:B} is
\[
  B = \begin{pmatrix} \zeta \beta & \zeta \\ \zeta^q \beta^q & \zeta^q \end{pmatrix}
  = \zeta \begin{pmatrix} \beta & 1 \\ 1-\theta^{1/2}\beta & -\theta^{1/2} \end{pmatrix}.
\]
A small calculation reveals that
\[
 B^{-1} \Theta^{-1} B^{(1)} =
 \begin{pmatrix}
    1 - \dfrac{\theta^{1/2}\beta\, t}{t-\theta} & -\dfrac{\theta^{1/2}\,t}{t-\theta} \\[10pt]
    \dfrac{\theta^{1/2}\beta^{q+1}\, t}{t-\theta} & 1 + \dfrac{\theta^{1/2}\beta^q\, t}{t-\theta}
 \end{pmatrix},
\]
and we easily verify that $\dnorm{B^{-1}\Theta^{-1}B^{(1)}-I} < 1$ as in Theorem~\ref{T:B}. We omit the details, but it is instructive to compare approximations of residues and evaluations of $\Pi$ at $t=\theta$ to the expected periods and quasi-periods coming from the theory of the Carlitz module.
\end{example}

\begin{example} \label{Ex:A}
Let $q=3$, and choose $\nu \in \KK$ so that $\nu^2 = \theta^3 - \theta -1$. Fix the Drinfeld module $\phi: \bA \to \KK[\tau]$ defined by
\[
 \phi_t = \theta + (\nu^3 + \nu) \tau +  \tau^2.
\]
This Drinfeld module has been widely studied as it has complex multiplication by the class number~$1$ ring $\FF_3[t,y]$, where $y^2=t^3-t-1$ and
\[
\phi_y = \nu + (\nu^4-\nu^2)\tau + (\nu^9 + \nu^3 + \nu)\tau^2 + \tau^3.
\]
Thakur~\cite[\S 2.3(c)]{Thakur93}, \cite[\S 8.2]{Thakur}, showed that the coefficients of $\phi_t$, $\phi_y$, $\exp_{\phi}(z)$, and $\log_{\phi}(z)$, can be derived from its associated shtuka function, and this point of view was used in~\cite[\S 4, \S 9]{GreenP18} to find explicit product expansions of the direct analogues of $\omega_C$ and~$\tpi$.

First we observe that $\deg A_1 = \tfrac{9}{2}$ and $\deg A_2 = 0$, so $\lambda_1=-\tfrac{7}{4}$ and $\lambda_2=\tfrac{3}{4}$. Thus in Theorem~\ref{T:Rank2} and~\eqref{E:pideg} we have $N=\ell=2$ and
\[
  \deg \xi_1 = -\frac{11}{4}, \quad \deg \xi_2 = -\frac{5}{4}, \quad
  \deg \pi_1 = -\frac{3}{4}, \quad \deg \pi_2 = \frac{3}{4}.
\]
These agree with~\cite[\S 7]{EP13}, where some of these quantities were also calculated. In what follows all quantities are contained in $L\assign \laurent{\FF_9}{\theta^{-1/4}}$. We fix $\bsi \assign \sqrt{-1} \in \FF_9$. The sign function on $k_{\infty}^{\times} \to \FF_3^{\times}$ extends to $L^{\times} \to \FF_9^{\times}$ in the natural way, and we choose~$\nu$ to have sign~$+1$. Using Newton's method we approximate the strict basis $x_1$, $x_2$ for $\phi[t]$:
\begin{align} \label{E:Exx1x2}
  x_1 &= \begin{aligned}[t]
    -\bsi \theta^{-7/4} - \bsi \theta^{-19/4} - \bsi\theta^{-27/4} + \bsi \theta^{-31/4} - \bsi \theta^{-35/4}
    &{}+ \bsi \theta^{-39/4} \\
    &{}+ \bsi \theta^{-43/4} + O(\theta^{-51/4}),
  \end{aligned}
  \\
  x_2 &= \begin{aligned}[t]
    \bsi \theta^{3/4} - \bsi \theta^{-1/4} + \bsi \theta^{-5/4} + \bsi \theta^{-9/4}
    &{}- \bsi \theta^{-13/4} - \bsi\theta^{-17/4} + \bsi\theta^{-21/4} \\
    &{}+ \bsi \theta^{-33/4} - \bsi \theta^{-37/4} + \bsi\theta^{-41/4} + O(\theta^{-45/4}).
  \end{aligned} \notag
\end{align}
If desired, we can solve for $x_1$, $x_2$ algebraically and find
\begin{align*}
  x_1^2 &= \theta^{1/2}(\theta+1) - \Bigl( -(\theta-1)\nu\theta^{1/2} - \theta(\theta+1)^2 \Bigr)^{1/2}, \\
  x_2^2 &= \theta^{1/2}(\theta+1) + \Bigl( -(\theta-1)\nu\theta^{1/2} - \theta(\theta+1)^2 \Bigr)^{1/2}.
\end{align*}
Again using Newton's method, we approximate $\xi_1$, $\xi_2 \in \phi[t^2]$ as in Proposition~\ref{P:good-xi}:
\begin{align} \label{E:Exxi1xi2}
  \xi_1 &= \begin{aligned}[t]
  -\bsi \theta^{-11/4} - \bsi \theta^{-19/4} - \bsi \theta^{-23/4} + \bsi \theta^{-31/4} + \bsi \theta^{-35/4}
  &{}+ \bsi \theta^{-39/4}\\
  &{}+ \bsi \theta^{-43/4} + O(\theta^{-51/4}),
  \end{aligned}\\
  \xi_2 &= \begin{aligned}[t]
    -\bsi \theta^{-5/4} + \bsi \theta^{-9/4} &{}+ \bsi \theta^{-13/4} - \bsi \theta^{-17/4} + \bsi \theta^{-21/4} + \bsi \theta^{-25/4}  \\
    &{}- \bsi \theta^{-29/4} - \bsi \theta^{-33/4} - \bsi \theta^{-37/4} - \bsi \theta^{-45/4} + O(\theta^{-53/4}).
  \end{aligned} \notag
\end{align}
We then have $h_1 = x_1 + \xi_1 t$, $h_2 = x_2 + \xi_2 t$, from which we form $B$ as in Theorem~\ref{T:B}. Writing $F = B^{-1} \Theta^{-1} B^{(1)}$, we find
\begin{align}
  [F]_{11} &= \begin{aligned}[t]
    1 + \bigl( \theta^{-2} &{}+ \theta^{-4} + \theta^{-7} - \theta^{-8} - \theta^{-9} + \theta^{-10} + O(\theta^{-14}) \bigr) t^2 \\
    &{}+ \bigl( -\theta^{-5} - \theta^{-7} - \theta^{-8} + \theta^{-10} + \theta^{-12} + O(\theta^{-15}) \bigr) t^3 + O(t^4),
  \end{aligned}
  \\
  [F]_{12} &= \begin{aligned}[t]
    \bigl( \theta^{-1/2} &{}- \theta^{-3/2} - \theta^{-5/2} + \theta^{-13/2} + \theta^{-15/2} + \theta^{-17/2} + \theta^{-19/2} \\
    &{}+ \theta^{-21/2} + O(\theta^{-27/2}) \bigr) t^2 + \bigl( -\theta^{-7/4} + \theta^{-9/2} + \theta^{-11/2}  \\
    &{}- \theta^{-13/2} + \theta^{-15/2} + \theta^{-17/2} + \theta^{-19/2} + O(\theta^{-27/2}) \bigr) t^3 + O(t^4),
  \end{aligned} \notag
  \\
  [F]_{21} &= \bigl( \theta^{-19/2} + \theta^{-23/2} + O(\theta^{-25/2}) \bigr) t^2 + O(t^4), \notag \\
  [F]_{22} &= 1 + \bigl( \theta^{-8} - \theta^{-9} - \theta^{-10} + \theta^{-11} - \theta^{-12} + O(\theta^{-13}) \bigr)t^2 + O(t^4). \notag
\end{align}
Thus $B^{-1} \Theta^{-1} B^{(1)}$ conforms to the expected inequality in Theorem~\ref{T:B}. We can approximate $\Pi$ using its product expansion in Theorem~\ref{T:Pi}, and then we estimate the periods $\pi_1$, $\pi_2$, and corresponding quasi-periods $\eta_1$, $\eta_2$, for $\phi$ as described in \S\ref{S:Intro}. Namely,
$\pi_j = -((t-\theta)[\Pi]_{1j})|_{t=\theta}$ and $\eta_j = ([\Pi]_{2j})|_{t=\theta}$ for $j=1$, $2$. From this we find
\begin{align}
  \pi_1 &= \begin{aligned}[t]
  -\bsi \theta^{-3/4} - \bsi\theta^{-11/4} - \bsi\theta^{-15/4} - \bsi\theta^{-19/4} + \bsi\theta^{-23/4}
  &{}+ \bsi\theta^{-39/4} \\
  &{}+ \bsi\theta^{-47/4} + O(\theta^{-55/4}),
  \end{aligned}
  \\
  \pi_2 &= \begin{aligned}[t]
    -\bsi \theta^{3/4} + \bsi \theta^{-5/4} + \bsi \theta^{-9/4} - \bsi \theta^{-29/4}
    &{}- \bsi \theta^{-33/4} + \bsi \theta^{-37/4} \\
    &{}- \bsi\theta^{-41/4} + \bsi \theta^{-45/4} + O(\theta^{-61/4}).
  \end{aligned} \notag
\end{align}
We first note that these formulas approximate the identity $\pi_2 = \nu \pi_1$ at least to the precision taken, as one would expect in this case of complex multiplication. Moreover, though we omit the details, this approximation for $\pi_1$ agrees numerically with (i) the one due to Green and the second author~\cite[Thm.~4.6, Rem.~4.7]{GreenP18}, coming from shtuka functions, and (ii) the one of Gekeler and Hayes~\cite[Eq.~(7.10.6)]{Goss}, from the theory of sign-normalized Drinfeld modules. We further compute the quasi-periods,
\begin{align}
  \eta_1 &= \bsi \theta^{-21/4} + \bsi \theta^{-29/4} + \bsi \theta^{-37/4} + \bsi \theta^{-45/4} - \bsi \theta^{-53/4} + \bsi \theta^{-57/4} + O(\theta^{-61/4}), \\
  \eta_2 &= \begin{aligned}[t]
    -\bsi \theta^{9/4} + \bsi \theta^{-3/4} + \bsi \theta^{-11/4} - \bsi\theta^{-15/4}
    &{}+ \bsi \theta^{-19/4} -\bsi \theta^{-23/4} \\
    &{}+ \bsi\theta^{-39/4} + \bsi\theta^{-47/4} + O(\theta^{-51/4}).
  \end{aligned} \notag
\end{align}
Using~\eqref{E:tpi}, one checks that $\pi_1\eta_2 - \pi_2\eta_1$ and $\bsi\tpi$ agree to the given precision, which aligns with the Legendre relation~\cite[\S 2]{Goss94}, \cite[Thm.~6.4.6]{Thakur}.
\end{example}


\begin{thebibliography}{99}

\bibitem{Anderson86} %
G. W. Anderson, \textit{$t$-motives}, Duke Math. J. \textbf{53} (1986), no. 2, 457--502.

\bibitem{ABP04} %
G. W. Anderson, W. D. Brownawell, and M. A. Papanikolas, \textit{Determination of the algebraic relations among special $\Gamma$-values in positive characteristic}, Ann. of Math. (2) \textbf{160} (2004), no.~1, 237--313.

\bibitem{AndThak90} %
G. W. Anderson and D. S. Thakur, \textit{Tensor powers of the Carlitz module and zeta values}, Ann. of Math. (2) \textbf{132} (1990), no.~1, 159--191.

\bibitem{ANT17} %
B. Angl\`{e}s, T. Ngo Dac, and F. Tavares Ribeiro. \textit{Special functions and twisted $L$-series}, J. Th\'{e}or. Nombres Bordeaux \textbf{29} (2017), no.~3, 931--961.

\bibitem {BP02} %
W.~D. Brownawell and M.~A. Papanikolas, \textit{Linear independence of Gamma-values in positive characteristic}, J. Reine Angew. Math. \textbf{549} (2002), 91--148.

\bibitem{Carlitz35} %
L. Carlitz, \textit{On certain functions connected with polynomials in a Galois field}, Duke Math. J. \textbf{1} (1935), no.~2, 137--168.

\bibitem{CP11} %
C.-Y. Chang and M. A. Papanikolas, \textit{Algebraic relations among periods and logarithms of rank~$2$ Drinfeld modules}, Amer. J. Math. \textbf{133} (2011), no.~2, 359--391.

\bibitem{CP12} %
C.-Y. Chang and M. A. Papanikolas, \textit{Algebraic independence of periods and logarithms of Drinfeld modules. With an appendix by B.~Conrad}, J. Amer. Math. Soc. \textbf{25} (2012), no.~1, 123--150.

\bibitem{EP13} %
A. El-Guindy and M. A. Papanikolas, \textit{Explicit formulas for Drinfeld modules and their periods}, J. Number Theory \textbf{133} (2013), no.~6, 1864--1886.

\bibitem{EP14} %
A. El-Guindy and M. A. Papanikolas, \textit{Identities for Anderson generating functions for Drinfeld modules}, Monatsh. Math. \textbf{173} (2014), no. 3--4, 471--493.

\bibitem{FresnelvdPut} %
J. Fresnel and M. van der Put, \textit{Rigid Analytic Geometry and its Applications}, Birkh\"{a}user, Boston, 2004.

\bibitem{GazdaMaurischat22} %
Q. Gazda and A. Maurischat, \textit{Special functions and Gauss-Thakur sums in higher rank and dimension}, J. Reine Angew. Math. \textbf{773} (2021), 231--261.

\bibitem{Gekeler89} %
E.-U. Gekeler, \textit{On the de Rham isomorphism for Drinfeld modules}, J. Reine Angew. Math. \textbf{401} (1989), 188--208.

\bibitem{Gekeler19} %
E.-U. Gekeler, \textit{On the field generated by the periods of a Drinfeld module}, Arch. Math. (Basel) \textbf{113} (2019), no.~6, 581--591.

\bibitem{GezmisP19} %
O. Gezmi\c{s} and M. A. Papanikolas, \textit{The de Rham isomorphism for Drinfeld modules over Tate algebras}, J. Algebra \textbf{525} (2019), 454--496.

\bibitem{Goss94} %
D. Goss, \textit{Drinfeld modules: cohomomology and special functions}, in: Motives (Seattle, WA, 1991), Proc. Sympos. Pure Math., vol.~55, Part 2, Amer. Math. Soc., Providence, RI, 1994, pp.~309--362.

\bibitem{Goss} %
D. Goss, \textit{Basic Structures of Function Field Arithmetic}, Springer-Verlag, Berlin, 1996.

\bibitem{Green22} %
N. Green, \textit{Tensor powers of rank~$1$ Drinfeld modules and periods}, J. Number Theory \textbf{232} (2022), 204--241.

\bibitem{GreenP18} %
N. Green and M. A. Papanikolas, \textit{Special $L$-values and shtuka functions for Drinfeld modules on elliptic curves}, Res. Math. Sci. \textbf{5} (2018), 5:4, 47 pp.

\bibitem{HartlJuschka20} %
U. Hartl and A.-K. Juschka, \textit{Pink's theory of Hodge structures and the Hodge conjecture over function fields}, in: $t$-Motives: Hodge Structures, Transcendence and Other Motivic Aspects, Eur. Math. Soc., Z\"urich, 2020, pp.~31--182.

\bibitem{KPhD} %
C. Khaochim, \textit{Rigid analytic trivializations and periods of Drinfeld modules and their tensor products}, Ph.D.~thesis, Texas A\&M University, 2021.

\bibitem{Maurischat19a} %
A. Maurischat, \textit{On field extensions given by periods of Drinfeld modules}, Arch. Math. (Basel) \textbf{113} (2019), no.~3, 247--254.

\bibitem{Maurischat22} %
A. Maurischat, \textit{Periods of $t$-modules as special values}, J. Number Theory \textbf{232} (2022), 177--203.

\bibitem{NP21} %
C. Namoijam and M. A. Papanikolas, \textit{Hyperderivatives of periods and quasi-periods for Anderson $t$-modules}, Mem. Amer. Math. Soc. (to appear), arXiv:2103.05836, 2021.

\bibitem{Pellarin08} %
F. Pellarin, \textit{Aspects de l'ind\'{e}pendance alg\'{e}brique en caract\'{e}ristique non nulle}, S\'{e}m. Bourbaki, vol. 2006/2007. Ast\'{e}risque \textbf{317} (2008), no.~973, viii, 205--242.

\bibitem{Pellarin12} %
F. Pellarin, \textit{Values of certain $L$-series in positive characteristic}, Ann. of Math. (2) \textbf{176} (2012), no.~3, 2055--2093.

\bibitem{Perkins14} %
R. B. Perkins, \textit{Explicit formulae for $L$-values in finite characteristic}, Math. Z. \textbf{278} (2014), no.~1--2, 279--299.

\bibitem{Robert} %
A. Robert, \textit{A course in $p$-adic analysis}, Springer-Verlag, New York, NY, 2000.

\bibitem{Sinha97} %
S. K. Sinha, \textit{Periods of $t$-motives and transcendence}, Duke Math. J. \textbf{88} (1997), no.~3, 465--535.

\bibitem{Thakur93} %
D. S. Thakur, \textit{Shtukas and Jacobi sums}, Invent. Math. \textbf{111} (1993), no.~3, 557--570.

\bibitem{Thakur} %
D. S. Thakur, \textit{Function Field Arithmetic}, World Scientific Publishing, River Edge, NJ, 2004.

\end{thebibliography}
\end{document}